\numberwithin{equation}{section}
\newfont{\cyr}{wncyr10 scaled 1100}
\newfont{\cyrr}{wncyr9 scaled 1000}
\newcommand{\smallbullet}{}
\DeclareRobustCommand\smallbullet{
  \mathord{\mathpalette\smallbullet@{0.5}}
}
\newcommand{\smallbullet@}[2]{
  \vcenter{\hbox{\scalebox{#2}{$\m@th#1\bullet$}}}
}
\theoremstyle{definition}
\newtheorem{theorem}{Theorem}[section]
\newtheorem*{theorem*}{Theorem}
\newtheorem{proposition}[theorem]{Proposition}
\newtheorem{lemma}[theorem]{Lemma}
\newtheorem{corollary}[theorem]{Corollary}
\newtheorem{definition}[theorem]{Definition}
\newtheorem{remark}[theorem]{Remark}
\newcommand{\Q}{\mathbb{Q}}
\newcommand{\Z}{\mathbb{Z}}
\newcommand{\R}{\mathbb{R}}
\newcommand{\C}{\mathbb{C}}
\newcommand{\F}{\mathbb{F}}
\newcommand{\PP}{\mathbb{P}}
\newcommand{\G}{\mathbb{G}}
\newcommand{\A}{\mathbb{A}}
\newcommand{\X}{\mathbb{X}}
\newcommand{\defeq}{\vcentcolon=}
\newcommand{\pwseries}[1]{[\![ #1]\!]}
\DeclareMathOperator{\Pic}{Pic}
\DeclareMathOperator{\End}{End}
\DeclareMathOperator{\Hom}{Hom}
\DeclareMathOperator{\Gal}{Gal}
\DeclareMathOperator{\GL}{GL}
\DeclareMathOperator{\M}{M}
\DeclareMathOperator{\CH}{CH}
\DeclareMathOperator{\id}{id}
\DeclareMathOperator{\Ta}{Ta}
\DeclareMathOperator{\et}{\text{\'et}}
\DeclareMathOperator{\TSym}{TSym}
\DeclareMathOperator{\Sym}{Sym}
\DeclareMathOperator{\mom}{mom}
\DeclareMathOperator{\Res}{Res}
\DeclareMathOperator{\Corr}{Corr}
\DeclareMathOperator{\CHM}{CHM}
\DeclareMathOperator{\Rep}{Rep}
\DeclareMathOperator{\VHS}{VHS}
\DeclareMathOperator{\Anc}{Anc}
\newcommand{\cores}{\mathrm{cores}}
\newcommand{\ord}{\mathrm{ord}}
\newcommand{\norm}{\mathrm{norm}}
\newcommand{\cont}{\mathrm{cont}}
\newcommand{\mot}{\mathrm{mot}}
\newcommand{\Sh}{\mathrm{Sh}}
\newcommand{\rec}{\mathrm{rec}}
\newcommand{\boldG}{\boldsymbol{\mathrm{G}}}
\newcommand{\boldH}{\boldsymbol{\mathrm{H}}}
\newcommand{\Et}{\mathrm{\acute{E}t}}
\renewcommand{\et}{\mathrm{\acute{e}t}}
\newcommand{\longmono}{\mbox{\;$\lhook\joinrel\longrightarrow$\;}}
\newcommand{\longepi}{\mbox{\;$\relbar\joinrel\twoheadrightarrow$\;}}
\newcommand{\smallmat}[4]{\bigl(\begin{smallmatrix}#1&#2\\#3&#4\end{smallmatrix}\bigr)}
\begin{document}

\title{Interpolation of generalized Heegner classes along quaternionic Coleman families}
\date{}
\author{E. Rocha Walchek}
\thanks{}

\begin{abstract}
We construct big generalized Heegner classes by interpolating $p$-adically the generalized Heegner classes associated to quaternionic modular forms along a Coleman (finite slope) family, following the approach introduced by Jetchev--Loeffler--Zerbes.
\end{abstract}

\address{Facultad de Ciencias, Universidad de la República, Iguá 4225, 11400 Montevideo, Uruguay}
\email{erocha@cmat.edu.uy}

\subjclass[2020]{11F11 (primary), 14G35 (secondary)}

\keywords{quaternionic multiplication abelian varieties, Shimura curves, quaternionic Coleman families, generalized Heegner classes, big generalized Heegner classes}
\maketitle

\section{Introduction}\label{sec.Introduction}

Let $K$ be a number field. The \textit{Bloch--Kato conjecture} for $K$-representations $V$ coming from geometry states that the Bloch--Kato Selmer group of $V$ dictates the behavior of the $L$-function attached to $V$ at the special point $s=0$, minus slight correction: 
\begin{equation}\label{eq.BKC}
\ord_{s=0}L(s,V)=\dim H^1_f(\Gal(\overline{K}/K),V^*(1))-\dim H^0(\Gal(\overline{K}/K),V^*(1)).
\end{equation}
If $V$ is the $p$-adic representation attached to an elliptic curve $E$ over $\Q$ with finite Shafarevich--Tate group, \eqref{eq.BKC} becomes the \textit{Birch and Swinnerton-Dyer (BSD) conjecture} which is known to hold when $\ord_{s=1}L(s,E)\in\{0,1\}$, due to Kolyvagin's work on Euler systems \cite{Kolyvagin} -- the use of norm-compatible system of classes arising from a tower of \textit{Heegner points} -- combined with the Gross--Zagier formula \cite{GrossZagier} -- relating the height of said Heegner points to the derivative of the $L$-function -- which is the key to bound the size of the Selmer group.

Since then, Euler systems of Abel--Jacobi images of cycles generalizing Heegner points have become the most promising tool for studying the Bloch--Kato conjecture for representations attached to modular forms. Neková\v{r} \cite{Nekovar.Kolyvagin, Nekovar.HeegnerCycles} introduced \textit{Heegner cycles} for higher weight modular forms, defined over $\mathcal{E}^k$, the $k$-th fiber product power of the universal elliptic curve $\mathcal{E}$ over the modular curve $Y_0(N)$. Bertolini--Darmon--Prasanna \cite{BertoliniDarmonPrasanna} later introduced \textit{generalized Heegner cycles} by adding a ``fixed'' elliptic curve $E$ with CM over an imaginary quadratic field factor to the Kuga--Sato variety, now consisting of the canonical desingularization of $\mathcal{E}^k\times_{Y_1(N)} E^k$, from which twists by characters arise. These cycles can be directly related to certain values of $p$-adic analogues of complex $L$-functions, called BDP $p$-adic $L$-functions; notably, these values are obtained by $p$-adic limit processes at points outside the range of $p$-adic interpolation of complex $L$-values. The relation between Abel--Jacobi images of generalized Heegner cycles (classes) and values of the BDP $p$-adic $L$-function outside the range of interpolation is known as \textit{explicit reciprocity law}, and plays a fundamental role in all applications of generalized Heegner cycles to the Bloch--Kato conjecture.

By interpolating classes along a $p$-adic family of modular forms,  geometric constructions that are available only for infinitely many classical forms in a $p$-adic family can be used to generate, via $p$-adic approximation, algebraic objects at points where at first no such constructions are available, a major advantage over the single modular form approach. The zero slope (Hida family) case is due to Howard \cite{Howard}: after establishing a relation between the generalized Heegner classes and Heegner points in a certain tower of modular curves, a big Heegner point, playing the role of $p$-adic family of Heegner points, allows for the $p$-adic interpolation along a Hida family of classes associated to elliptic modular forms. Büyükboduk--Lei \cite{BuyukbodukLei} and Jetchev--Loeffler--Zerbes \cite{JetchevLoefflerZerbes} have independently defined \textit{big generalized Heegner classes} interpolating the generalized Heegner classes of \cite{BertoliniDarmonPrasanna} as they vary over a finite slope (Coleman) family of elliptic modular forms. After $p$-adically interpolating the special values of the complex $L$-functions of each modular form in the family into a \textit{$p$-adic $L$-function}, the big generalized Heegner classes are found to be related to the $p$-adic $L$-function via an \textit{explicit reciprocity law} extending to families the main result of \cite{BertoliniDarmonPrasanna}.

The present document concerns the definition of the \textit{big generalized Heegner classes} in the indefinite quaternionic setting, adapting the approach of \cite{JetchevLoefflerZerbes}. Future work in preparation will address the interpolation of $p$-adic $L$-functions along Coleman families of quaternionic modular forms and the correspondent explicit reciprocity law, an expected extension to families of the main result of \cite{Brooks}. What follows is a brief account of the contents of this article.

\subsection{The indefinite quaternionic setting} In the \textit{elliptic} setting above, generalized Heegner cycles attached to a $p$-stabilized eigenform of level $pN$, with $p\nmid N$, are defined over (ring class fields of) an imaginary quadratic field $K$ of discriminant $-D_K$ in which all prime factors of $N$ are split. However, this restrictive hypothesis leave out many fields, which can be covered by weakening the hypothesis on $N$, allowing $N=N^+N^-$ to have both split and inert primes (the factors of $N^+$ and $N^-$ respectively). This replaces the modular curves in which the Heegner points were defined with \textit{Shimura curves} over the quaternionic algebra $B$ of discriminant $N^->1$ classifying \textit{QM abelian surfaces}. Since the modular forms over $B$ and $\GL_2(\Q)$ are essentialy the same via the Jacquet--Langlands lift, one needs to ``halve'' the dimension of every object coming from such abelian surfaces so they behave like objects coming from elliptic curves; this is done by introducing an action by a suitable global idempotent $e\in B$. Furthermore, it is imperative for the quaternion algebra $B$ to split at $\infty$ as $\GL_2(\Q)$ does, in which case we say that $B$ is \textit{indefinite}.

\subsection{Generalized Heegner cycles} The definitions and results of \cite{BertoliniDarmonPrasanna} find their quaternionic counterpart in \cite{Brooks}. For each $m\ge 0$, there is a Shimura curve $\widetilde{X}_m$ over $B$ playing the role of ``quaternionic $Y_1(Np^m)$''. Let $\pi_{\mathcal{A}}\colon\widetilde{\mathcal{A}}_m\to \widetilde{X}_m$ be the universal abelian surface over $\widetilde{X}_m$, $\mathcal{F}$ be a quaternionic eigenform over $\widetilde{X}_m$ of tame level $N^+$ and weight $k=2r+2\ge 2$, $A\defeq (\C/\mathcal{O}_K)^{\oplus 2}$ an abelian surface with an action by a maximal order of $B$ (quaternionic multiplication), $\mathcal{O}_{cp^n}$ be the order in $K$ of conductor $cp^n$ for some $c\ge 0$ prime to $pND_K$ and consider the natural isogeny $\phi\colon A\to A_{cp^n}\defeq (\C/\mathcal{O}_{cp^n})^{\oplus 2}$. Then \[\operatorname{graph}(\phi)^r\in\CH^{k-1}(W_{k,m}\otimes_{H}F_{cp^n})_{\Q}\] defines a cycle over the \textit{generalized Kuga--Sato variety} $W_{k,m}\defeq \widetilde{\mathcal{A}}_m^r\times_{\widetilde{X}_m}A^r$ over a suitably large finite extension $F_{cp^n}$ of $H_{cp^n}$, the ring class field of $K$ of conductor $cp^n$. Averaging over all permutations of the fiber product via a projector $\epsilon_W$ defines the \textit{generalized Heegner cycle} \[\Delta_{cp^n,m}^{[k]}\defeq \epsilon_W\operatorname{graph}(\phi)^r\in\epsilon_W\CH^{k-1}(W_{k,m}\otimes_{H}F_{cp^n})_{\Q},\] which can be mapped into a class in $\epsilon_{W}H_{\et}^{2k-2}(W_{k,m}\otimes_HF_{cp^n},\Q_p(k-1))$ via the étale realization map. Lieberman's trick allows us to work over the simpler Shimura curve $\widetilde{X}_m$ instead of $W_{k,m}$, for the price of a slightly more complicated coefficient sheaf:
\begin{equation*}
\epsilon_W H_{\et}^{2k-2}(W_{k,m},\Q_p(k-1))\longrightarrow H_{\et}^2\left(\widetilde{X}_m,\TSym^{2r}(eR^1\pi_{\mathcal{A},*}\Q_{p})(2r+1)\otimes \TSym^{2r}(eR^1\pi_{A,*}\Q_{p})\right),
\end{equation*}
where $e$ is a ``dimension halver'' idempotent which splits $R^1\pi_{\mathcal{A},*}\Q_{p}$ into two isomorphic factors, each of the size of the correspondent object arising from an elliptic curve. The second factor is a sum of $2r$ characters, while the first factor, after projecting to the $j$-th among the $2r$ characters and then applying the Hochschild--Lyndon--Serre isomorphism, can be projected onto the dual $p$-adic Galois representation $V_{\mathcal{F}}^*$, all maps composing into the $p$-adic Abel--Jacobi map, under which $\Delta_{cp^n,m}^{[k]}$ becomes a \textit{generalized Heegner class} \[z_{cp^n,m}^{[\mathcal{F},j]}\in H^1\left(F_{cp^n},H^1_{\et}(\widetilde{X}_m\otimes_\Q\overline{\Q},V_{\mathcal{F}}^*\otimes\sigma_{\et}^{2r-j}\bar{\sigma}_{\et}^j)\right),\] where $\sigma_{\et}^{2r-j}\bar{\sigma}_{\et}^j$ is a character arising from a fixed embedding $\sigma\colon K\hookrightarrow L$ into a suitable $p$-adic field over which families of modular forms will be defined. If $\chi$ is a Hecke character that restricts to $\sigma_{\et}^{2r-j}\bar{\sigma}_{\et}^j$ in $\Gal(F_{cp^n}/H_{cp^n})$, we can then see $z_{cp^n,m}^{[\mathcal{F},j]}$ as a class in $H^1(H_{cp^n}, V_\mathcal{F}^*\otimes\chi)$.

\subsection{Interpolation into big Heegner classes} Let $\mathcal{F}$ now be defined over the Shimura curve $\widetilde{X}_1$ and $p$-stabilized with respect to a root $\upalpha$ of its Hecke polynomial; let $\mathscr{F}$ be a Coleman family of slope $v_p(\upalpha)<k-1$ passing through $\mathcal{F}$ and defined over a suitably small affinoid $\mathscr{U}$ of the weight space $\mathscr{W}$. To $p$-adically interpolate along $\mathscr{F}$, we follow the approach of \cite{JetchevLoefflerZerbes} and transfer this problem to the $p$-adic interpolation of basis vectors of the representation associated, under Ancona's functor \cite{Ancona}, to the motive whose étale realization is $\TSym^{2r}(eR^1\pi_{\mathcal{A},*}\Q_{p})(2r+1)\otimes\sigma_{\et}^{2r-j}\bar{\sigma}_{\et}^j$ (we take all étale sheaves to be defined over a suitable $p$-adic field $L$ over which $\mathscr{F}$ is defined). Such basis vectors are, in turn, defined after \textit{Heegner points} in Shimura curves.

A Heegner point $\vartheta_{cp^n}\in \widetilde{X}_1$ associated to the optimal embedding $\iota_K^B\colon K\hookrightarrow B$ of $\mathcal{O}_{cp^n}$ into the standard Eichler order of level $p$ gives a \textit{basis vector} \[e_{cp^n}^{[k,j]}\in\TSym^{2r}((L^2)^\vee)\otimes \sigma_{\et}^{2r-j}\bar\sigma_{\et}^j,\] which can be seen as a section over $S_{cp^n}$, the zero dimensional Shimura variety over $K$ classifying elliptic curves defined over $F_{cp^n}$ with CM by $\mathcal{O}_K$. Under the \textit{Gysin (``wrong way'') map} induced by the embedding $S_{cp^n}\stackrel{\iota_K^B\times\id}{\hookrightarrow} \widetilde{X}_1\times S_{cp^n}$, $e_{cp^n}^{[k,j]}$ maps into $z_{cp^n,1}^{[\mathcal{F},j]}$. One then interpolate basis vectors over $\mathscr{U}$ into \textit{big basis vectors} $\mathbf{e}^{[\mathscr{U},j]}_{cp^n}$, using the overconvergent projector of Loeffler--Zerbes \cite{LoefflerZerbes.Rankin}. Under the interpolated Gysin map, the big basis vector map into a big class $\mathbf{z}^{[\mathscr{U}, j]}_{cp^n,m}$, not quite the big generalized Heegner class over $\mathscr{U}$, but its $j$-component. The classes obtained varying $n$ and $m$ already enjoy Euler system properties:
\begin{equation}
\norm_{F_{cp^{n+1}}/F_{cp^{n}}}\left(\mathbf{z}^{[\mathscr{U},j]}_{cp^{n+1},1}\right)=\mathbf{a}_p\cdot\mathbf{z}^{[\mathscr{U},j]}_{cp^n,1}.
\end{equation}
The interpolated classes above lie in cohomologies with coefficients in a distributions module $D_\mathscr{U}$ that interpolates the single form case coefficient sheaves $\TSym^{2r}(eR^1\pi_{\mathcal{A},\ast}\Q_p(1))$ in $2r$ over $\mathscr{U}$. As in the single form case, $D_\mathscr{U}$ projects onto a \textit{big dual Galois representation} $\mathbf{V}_{\mathscr{F}}^*$ which specializes at weight $k$ to $V_{\mathcal{F}_k}^*$, where $\mathcal{F}_k$ denotes the specialization of $\mathscr{F}$ at weight $k\in\Z\cap\mathscr{U}$. This projection defines a class $\mathbf{z}^{[\mathscr{F},j]}_{cp^n,1}$, which specializes at weight $k\ge j$ to $\binom{2r}{j}^{-1}z^{[\mathcal{F}_k,j]}_{cp^n,1}$, the binomial factor needed to ``control'' the slope of $\mathscr{F}$. Finally, interpolate $\sigma_{\et}^{-j}\bar{\sigma}_{\et}^j$ in $j$ to $\Lambda(\bar{\sigma}_{\et}/\sigma_{\et})$, where $\Lambda\defeq\Z_p[\![\operatorname{Gal}(\bigcup_{i=1}^n F_{cp^i}/F_{cp})]\!]$, giving the \textit{big generalized Heegner class} associated to $\mathscr{F}$, denoted $\mathbf{z}^{[\mathscr{F},\mathbf{j}]}_{cp}$ and defined over $F_{cp}$.

\subsection{Main result}
As our main result, we describe the specializations of the generalized big Heegner class at classical weights:

\begin{theorem*}[Theorem \ref{teo.Specialization}]
Let $\mathcal{F}$ be a $p$-stabilized quaternionic modular form of signature $(k,\psi)$ and $\mathscr{F}$ be a Coleman family passing through $\mathcal{F}$, defined over an affinoid $\mathscr{U}$ of the weight space $\mathscr{W}$, and with coefficients in a $p$-adic field $L\hookleftarrow K$. The specialization of $\mathbf{z}_{cp}^{[\mathscr{F},\mathbf{j}]}\in H^1(H_{cp},(\mathbf{V}_\mathscr{F}^*\otimes\kappa^\mathrm{univ}_\mathscr{U})\widehat{\otimes}\Lambda(\bar{\sigma}/\sigma))$ at $0\le j\le 2r$, weight $k=2r+2\in\Z\cap\mathscr{U}$ and Hecke character of $\chi$ of infinity type $(2r-j,j)$ is the class \[a_p(\mathcal{F}_k)^{-n}\binom{2r}{j}^{-1}z_{cp^n,1}^{[\mathcal{F}_{k},\chi]}\in H^1(H_{cp^n},V_{\mathscr{F}}^\ast\otimes\chi),\] where $a_p(\mathcal{F}_k)$ denotes the eigenvalue of $\mathcal{F}_k$ with respect to the Hecke operator $U_p$ at $p$.
\end{theorem*}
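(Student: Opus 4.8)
\section*{Proof proposal}

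The plan is to compute the specialization by unwinding the successive interpolations defining $\mathbf{z}^{[\mathscr{F},\mathbf{j}]}_{cp}$ and recording the scalar contributed at each stage; the asserted formula is then the product of these scalars. Throughout, ``specialization'' means the map on cohomology induced by the appropriate evaluation or moment morphism on coefficients, together with the point of $\mathscr{W}$ determined by the weight $k$ and the point of $\Spec\Lambda$ determined by $\chi$; at each stage one checks that this commutes with the relevant geometric and projection maps.

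First I would reduce to a fixed layer of the tower. By construction $\mathbf{z}^{[\mathscr{F},\mathbf{j}]}_{cp}$ is the $\Lambda(\bar\sigma_{\et}/\sigma_{\et})$-adic interpolation in $j$ of the classes $\mathbf{z}^{[\mathscr{F},j]}_{cp^n,1}$. The character $\chi$ of infinity type $(2r-j,j)$ determines $j$ through its infinity type and the layer $F_{cp^n}$ through the conductor of its $p$-part, and evaluating the $\Lambda$-adic variable at $\chi$ (together with evaluating $\kappa^{\mathrm{univ}}_\mathscr{U}$ at $k$) picks out this layer and applies the $\chi$-twist, returning $\mathbf{z}^{[\mathscr{F},j]}_{cp^n,1}$ twisted by $\chi$ \emph{up to the normalization used to assemble the $\Lambda$-adic class}. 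That normalization is dictated by the Euler-system relation $\norm_{F_{cp^{n+1}}/F_{cp^n}}(\mathbf{z}^{[\mathscr{U},j]}_{cp^{n+1},1})=\mathbf{a}_p\cdot\mathbf{z}^{[\mathscr{U},j]}_{cp^n,1}$, which passes through the projection $D_\mathscr{U}\twoheadrightarrow\mathbf{V}_\mathscr{F}^*$ to the system $\{\mathbf{z}^{[\mathscr{F},j]}_{cp^n,1}\}_n$: this system is not norm-compatible, so to define an element of the inverse limit one must rescale the $n$-th term by $\mathbf{a}_p^{-n}$. Hence evaluation at $\chi$ produces the factor $a_p(\mathcal{F}_k)^{-n}$ once the weight is specialized at $k$, where $\mathbf{a}_p$ specializes to $a_p(\mathcal{F}_k)=\upalpha$, which is nonzero by the slope hypothesis $v_p(\upalpha)<k-1$; this hypothesis also ensures that $\mathcal{F}_k$ is the classical specialization of $\mathscr{F}$ and that the overconvergent projector used below is defined at weight $k$.

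It remains to identify the weight-$k$ specialization of $\mathbf{z}^{[\mathscr{F},j]}_{cp^n,1}$ with $\binom{2r}{j}^{-1}z^{[\mathcal{F}_k,j]}_{cp^n,1}$; this is the comparison recalled in the construction of $\mathbf{z}^{[\mathscr{F},j]}_{cp^n,1}$, which I would re-derive by chasing the remaining two interpolations. The class $\mathbf{z}^{[\mathscr{F},j]}_{cp^n,1}$ is the image of $\mathbf{z}^{[\mathscr{U},j]}_{cp^n,1}$ under $D_\mathscr{U}\twoheadrightarrow\mathbf{V}_\mathscr{F}^*$, whose defining property is that it specializes at weight $k$ to $\binom{2r}{j}^{-1}$ times the single-form projection $\TSym^{2r}(eR^1\pi_{\mathcal{A},*}\Q_p(1))\twoheadrightarrow V_{\mathcal{F}_k}^*$ --- the binomial being the slope-controlling factor. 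In turn $\mathbf{z}^{[\mathscr{U},j]}_{cp^n,1}$ is the image of the big basis vector $\mathbf{e}^{[\mathscr{U},j]}_{cp^n}$ under the interpolated Gysin map of $S_{cp^n}\hookrightarrow\widetilde{X}_1\times S_{cp^n}$. Since this Gysin map is a geometric pushforward along a closed immersion and the moment maps $D_\mathscr{U}\to\TSym^{2r}(eR^1\pi_{\mathcal{A},*}\Q_p(1))$ are morphisms of coefficient sheaves, specialization commutes with it, reducing us to the facts --- established when the overconvergent projector of \cite{LoefflerZerbes.Rankin} was applied to $\vartheta_{cp^n}$ --- that $\mathbf{e}^{[\mathscr{U},j]}_{cp^n}$ specializes at weight $k$ to the basis vector $e^{[k,j]}_{cp^n}$ and that the single-form Gysin image of $e^{[k,j]}_{cp^n}$ is $z^{[\mathcal{F}_k,j]}_{cp^n,1}$. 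Finally $z^{[\mathcal{F}_k,j]}_{cp^n,1}$ twisted by $\chi$ is by definition $z^{[\mathcal{F}_k,\chi]}_{cp^n,1}$, and multiplying the scalars $a_p(\mathcal{F}_k)^{-n}$ and $\binom{2r}{j}^{-1}$ gives the claim.

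The main obstacle I anticipate is the bookkeeping in the first reduction: pinning down the precise normalization by which the system $\{\mathbf{z}^{[\mathscr{F},j]}_{cp^n,1}\}_n$, non-norm-compatible because of the $\mathbf{a}_p$-Euler-system relation, is assembled into the $\Lambda$-adic class, and then verifying that this normalization, evaluated at $\chi$ of $p$-conductor $p^n$ and specialized at weight $k$, gives exactly $a_p(\mathcal{F}_k)^{-n}$. This is the quaternionic analogue of the $U_p$-normalization in \cite{Howard} and \cite{JetchevLoefflerZerbes}, which provides a reliable template. A secondary, conceptually routine point requiring care is the compatibility of the ``dimension-halving'' idempotent $e$ and the Kuga--Sato projector $\epsilon_W$ with the moment maps and with the interpolated Gysin construction, so that the single-form and big realizations genuinely match after specialization.
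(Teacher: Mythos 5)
Your proposal is correct and follows essentially the paper's own route: the theorem is precisely the interpolation property $\mathrm{mom}_{j,n}(\mathbf{z}^{[\mathscr{F},\mathbf{j}]}_{cp})=\mathbf{a}_p^{-n}\mathbf{z}^{[\mathscr{F},j]}_{cp^n,1}$ of Proposition \ref{prop.BigHeegnerClass}, combined with the weight-$k$ specialization of $\mathbf{z}^{[\mathscr{F},j]}_{cp^n,1}$ to $\binom{2r}{j}^{-1}z^{[\mathcal{F}_k,j]}_{cp^n,1}$ from Proposition \ref{prop.SpecializationNabla} (itself proved by the Gysin/moment compatibility you re-derive), followed by passage to the $\chi$-component via Lemma \ref{lem.Shimura} and \eqref{eq.InflationRestriction}. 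One cosmetic correction: the binomial factor is not a ``defining property'' of the projection $D_{\mathscr{U}}\twoheadrightarrow\mathbf{V}_{\mathscr{F}}^{*}$ (which contributes no scalar) but comes from the operator $\binom{\nabla}{j}^{-1}$ explicitly inserted in the definition of $\mathbf{z}^{[\mathscr{F},j]}_{cp^n,1}$, which specializes to $\binom{2r}{j}^{-1}$ at weight $k$; likewise $a_p(\mathcal{F}_k)$ equals $\upalpha$ only at $k=k_0$.
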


This result extends \cite[Theorem 5.4.1]{JetchevLoefflerZerbes} to the quaternionic setting, and the proof is an adaptation of the main techniques employed in \textit{op. cit.}, consisting of checking the conditions -- most notably the Euler system relations (\textit{cf.} Proposition \ref{prop.EulerRelationsBig}) and the boundness of the growth of the big classes (\textit{cf.} Lemma \ref{lem.vartheta} and the proof of Proposition \ref{prop.BigHeegnerClass}) -- under which a general result of Loeffler--Zerbes \cite[Proposition 2.3.3]{LoefflerZerbes.Rankin} apply.

\subsubsection*{Acknowledgements}
I would like to profoundly thank my mentor Matteo Longo for his patient guidance and constant support during my Ph.D. candidacy at Padova, which were essential to the completion of my thesis and of this article. I also thank Paola Magrone for helpful discussions of her work, and for indirect discussions through the updates of a previous joint project with her and Matteo Longo. I extend my gratitude to the kind referee for their valuable comments.

The content of this article was developed as part of my Ph.D. thesis at Università degli Studi di Padova, and funded by the doctoral scholarship. 

\section{Notation}\label{sec.Notation}

Fix algebraic closures $\overline{\Q}$ for $\Q$ with an embedding $\iota_\infty\colon\overline{\Q}\hookrightarrow\C$, and, for any rational prime $\ell$, $\overline{\Q}_\ell$ for $\Q_\ell$ with an embedding $\iota_\ell\colon\overline{\Q}\hookrightarrow\overline{\Q}_\ell$. For any prime $\ell$, $v_\ell$ denotes the $\ell$-adic valuation normalized so that $v_\ell(\ell)=1$.

\subsection{Generalized Heegner hypothesis}\label{subsec.GeneralizedHeegnerHypothesis}
Fix $p$ a rational prime and $f_0^\sharp$ a new at $p$ elliptic eigenform of even weight $k_0\ge 2$ and level $\Gamma_1(N)$, with $N$ coprime with $p$. Next, we consider a quadratic imaginary field $K$, of discriminant $-D_K<0$ and ring of integers $\mathcal{O}_K$, satisfying the \textit{generalized Heegner hypothesis} with respect to $N$:
\begin{enumerate}[label=\textnormal{(Heg\arabic*)}]
\item\label{cond.Heg1} A prime $\ell\mid N$ either splits or remains inert in $K$;

\item\label{cond.Heg2} Being $N^{+}$ the product of all primes $\ell\mid N$ that split in $K$, $N^{+}\ge 4$;

\item\label{cond.Heg3} Being $N^{-}$ the product of all primes $\ell\mid N$ inert in $K$, $N^{-}$ is square-free and has an even and non-zero number of prime divisors.
\end{enumerate}
From \ref{cond.Heg1}, we get a decomposition $N=N^{+}N^{-}$. For the prime $p$, write $\mathfrak{p}$ for the prime ideal in $K$ corresponding to the embedding $\iota_p$, so that in $K$ we have $(p)=\mathfrak{p}\bar{\mathfrak{p}}$ if $p$ splits, $(p)=\mathfrak{p}$ if $p$ is inert, and $(p)=\mathfrak{p}^2$ if $p$ is ramified.

\subsection{Class field theory}\label{subsec.CFT}
Denote by $\widehat{\Z}\defeq\varprojlim_{n\in\Z_{\ge 1}}\Z/n\Z$ the profinite integers, and for any $\Z$-module $M$, write $\widehat{M}\defeq M\otimes_{\Z}\widehat{\Z}$. For any number field $F$, $\A_F$ denotes its ring of adèles, being $\A_{F,\mathrm{fin}}$ and $\A_{F,\infty}$ the finite and infinite adèles, respectively.

\begin{remark}\label{rem.Reciprocity}
The Artin reciprocity map \[\operatorname{rec}_F\colon F^\times\backslash\A_F^\times\longrightarrow\Gal(F^{\mathrm{ab}}/F),\] where $F^\mathrm{ab}$ denotes the maximal abelian extension of $F$, will be always taken to be geometrically normalized, that is, uniformizers map to \textit{geometric} Frobenii instead of the arithmetic Frobenii, by composing the original map with the inversion map $\sigma\mapsto\sigma^{-1}$.
\end{remark}

For $c\in\Z_{\ge 1}$ coprime with $pND_K$ and $n\in\Z_{\ge 0}$, we denote by $\mathcal{O}_{cp^n}\defeq\Z +cp^n\mathcal{O}_K$ the order of $\mathcal{O}_K$ of conductor $cp^n$. We write $H_{cp^n}$ for the ring class field of $K$ of conductor $cp^n$; put $H\defeq H_{1}$ and $H_{cp^\infty}\defeq\bigcup_{n\ge 1}H_{cp^n}$.
Since $\gcd(c,p)=1$, we have $H_c\cap H_{p^\infty}=H$, and so \[\Gal(H_{cp^\infty}/K)\cong\Gal(H_c/K)\times\Gal(H_{p^\infty}/K).\] It follows from the Artin reciprocity law that $\Gal(H_{cp^n}/K)\cong\Pic(\mathcal{O}_{cp^n}).$

Since all primes dividing $N^+$ are split in $K$, there is a decomposition $N^+=\mathfrak{N}^+\bar{\mathfrak{N}}^+$ into ideals of $\mathcal{O}_K$ by which quotients are cyclic of order $N^+$. Let $F_{\mathfrak{N}^+}$ be the ray class field of $K$ modulo $\mathfrak{N}^+$ and $F_{\mathfrak{N}^+,cp^n}$ be the smallest abelian extension of $K$ containing both $F_{\mathfrak{N}^+}$ and $H_{cp^n}$. Under the Artin reciprocity map, 
$F_{\mathfrak{N}^+,cp^n}$ corresponds to the group 
\[U_{\mathfrak{N}^+,cp^n}\defeq\{x\in \widehat{\mathcal{O}}_{cp^n}^\times:x\equiv 1\mod \mathfrak{N}^{+}\}.\] We do not carry the dependence to $\mathfrak{N}^+$ in the notation and just write $F_{cp^n}\defeq F_{\mathfrak{N}^+,cp^n}$.

\subsection{Quaternion algebras}\label{subsec.QuaternionAlgebras}
Condition \ref{cond.Heg3} implies the existence of an indefinite\footnote{In this article, whenever we refer to the \textit{quaternionic} setting as opposed to the elliptic ($\mathrm{GL}_2$) setting, it is always understood that we are talking about the \textit{indefinite quaternionic} setting.}. rational quaternion division algebra of discriminant $N^{-}$. Fix $B$ one such algebra, denote by $\bar{\smallbullet}$ its main involution, and fix in $B$ a maximal order $\mathcal{O}_B$. Define $\delta\defeq\sqrt{-D_K}$ and
\begin{equation}\label{eq.vartheta}
\vartheta\defeq\dfrac{\delta+D'}{2},\hspace{10pt}\text{where }D'=\begin{cases} D_K, & \text{if }2\nmid D_K\\ D_K/2, & \text{if }2\mid D_K.\end{cases}
\end{equation}

As a $\Q$-algebra, $K$ embeds into $\mathrm{M}_2(\Q)$ by a $\Q$-linear map $\iota_K^{\M_2(\Q)}\colon K\to\mathrm{M}_2(\Q)$ defined in the $\Q$-base $\{1,\vartheta\}$ of $K$ by setting
\begin{equation}\label{eq.ImageVartheta}
\vartheta\longmapsto\begin{pmatrix}\operatorname{trace}_{K/\Q}(\vartheta) & -\operatorname{norm}_{K/\Q}(\vartheta) \\ 1 & 0\end{pmatrix},
\end{equation}
and we denote the composition of $\iota_K^{\M_2(\Q)}$ with the canonical inclusion $\iota_{\M_2(\Q)}^{\M_2(K)}\colon\mathrm{M}_2(\Q)\hookrightarrow\mathrm{M}_2(K)$ by $\iota_K^{\M_2(K)}$. Since all prime factors of $N^{-}$ are non-split in $K$, $B$ \textit{splits} over $K$, that is, there is an embedding $\iota_K^B\colon K\to B$, or equivalently, an isomorphism $I_B\colon B\otimes_{\Q} K\stackrel{\sim}{\longrightarrow}\mathrm{M}_2(K)$, and composing the natural inclusion $B\hookrightarrow B\otimes_{\Q} K$ with $I_B$ gives a map $\iota_B^{\M_2(K)}\colon B\hookrightarrow\mathrm{M}_2(K)$. We can choose $I_B$ to be such that $\iota_K^{\M_2(K)}=\iota_B^{\M_2(K)}\circ\iota_K^B$: all options for $I_B$ are conjugated to each other, so one just have to match the image of $\iota_K^B(\vartheta)$ under $\iota_B^{\M_2(K)}$ with $\iota_K^{\M_2(K)}(\vartheta)$.

For any finite place $\ell\nmid N^{-}$, where $B$ splits, we fix $i_\ell\colon B_\ell\defeq B\otimes_{\Q}\Q_\ell\stackrel{\sim}{\longrightarrow}\mathrm{M}_2(\Q_\ell)$ such that $i_\ell(\mathcal{O}_K\otimes_{\Z}\Z_\ell)\subseteq\mathrm{M}_2(\Z_\ell)$. In particular, $i_\ell$ is an isomorphism between $\mathcal{O}_{B,\ell}\defeq\mathcal{O}_{B}\otimes_{\Z}\Z_\ell$ and $\mathrm{M}_2(\Z_\ell)$, which reduces modulo $\ell^m$ for all $m\ge 1$ to an isomorphism \[\bar{\imath}_{\ell^m}\colon\mathcal{O}_B\otimes_{\Z}(\Z/\ell^m\Z)\stackrel{\sim}{\longrightarrow}\mathrm{M}_2(\Z/\ell^m\Z).\]
If $M$ is an integer coprime with $N^{-}$, the Chinese Remainder Theorem gives an isomorphism \[\bar{\imath}_M\colon \mathcal{O}_B\otimes_{\Z}(\Z/M\Z)\stackrel{\sim}{\longrightarrow}\mathrm{M}_2(\Z/M\Z).\]
Since $B$ is indefinite, $B$ splits at $\infty$, so there is an isomorphism $i_\infty\colon B_\infty\defeq B\otimes_{\Q}\R\to\M_2(\R)$. One can normalize all the isomorphisms $i_v$ defined at all places $v$ splitting $B$ to satisfy \eqref{eq.ImageVartheta}.

\subsection{QM abelian surfaces}\label{subsec.QMAbelianSurfaces}
Let $M$ be a positive integer coprime with $N^{-}$ and $S$ be a $\Z[1/N^{-}]$-scheme. An \textit{abelian surface with quaternionic multiplication} by $\mathcal{O}_B$ (or \textit{QM abelian surface}, for short) is a pair $(A,\iota)$ consisting of an abelian scheme $A\to S$ of relative dimension 2 and an injective algebra morphism $\iota\colon\mathcal{O}_B\hookrightarrow\End_S(A)$. An \emph{isogeny} (resp. an \emph{isomorphism}) of QM abelian surfaces is an isogeny (resp. an isomorphism) of abelian schemes commuting with the $\mathcal{O}_B$-action.

Since $K$ splits $B$, one can take $t\in\mathcal{O}_B$ such that $t^2=-D_K<0$ and define the involution
\begin{equation}\label{eq.Involution}
\smallbullet^\dagger\colon B\longrightarrow B,\hspace{5pt} b^\dagger\defeq t^{-1}\bar{b}t
\end{equation}
Then there is a unique principal polarization $\lambda\colon A\to A^\vee$ such that, for a geometric point $s$ of $S$, the restriction of the Rosati involution of $\End(A_s)$ to $\mathcal{O}_B$ coincides with $\dagger$ (see \cite[\S III.1.5]{BoutotCarayol}).

\subsection{Shimura curves}\label{subsec.ShimuraCurves}
The Shimura datum $(\boldG,\mathcal{H}^\pm)$ for $\boldG$ the algebraic group defined by
\begin{equation}\label{eq.boldG}
\boldG(F)\defeq (B\otimes_{\Q} F)^\times,\text{ for any extension }F/\Q,
\end{equation}
and $\mathcal{H}^\pm=\mathcal{H}^+\cup\mathcal{H}^-$, where $\mathcal{H}^\bullet\defeq\{\tau\in\C:\ \operatorname{sign}(\operatorname{Im}(\tau))=\bullet\}$ for $\bullet\in\{-,+\}$, is of PEL type.

For each integer $m\geq 0$, let \[R_m=\{b\in\mathcal{O}_B:\ i_\ell(b)\text{ is upper triangular mod }\ell^{v_{\ell}(N^+p^m)},\ \forall\ell\mid N^+p^m\text{ prime}\}\] be the \textit{Eichler order} of $B$ of level $N^+p^m$ with respect to the chosen isomorphisms $i_\ell$ for all finite places $\ell\nmid N^-$. Let $U_m\defeq\widehat{R}^\times_{m}=(R_{m}\otimes_\Z\widehat\Z)^\times$ and let $\widetilde U_m$ be the subgroup of $U_m$ consisting of the elements whose $p$-component  is congruent to a matrix of the form $\smallmat 1b0d\mod{p^m}$. For the same Shimura datum $(\boldG,\mathcal{H}^\pm)$ and varying levels $U_m$ and $\widetilde{U}_m$, we get two families of Shimura varieties respectively:
\begin{equation}
X_m(\C)\defeq B^\times\backslash(\mathcal{H}^\pm\times\widehat{B}^\times)/U_m\hspace{10pt}\text{and}\hspace{10pt}\widetilde X_m(\C)\defeq B^\times\backslash(\mathcal{H}^\pm\times\widehat{B}^\times)/\widetilde U_m,
\end{equation}
where $B^\times$ acts by Möbius transformations on $\mathcal{H}^\pm$ under $i_\infty$ and by left multiplication on $\widehat{B}^\times$, and $U_m$ (resp. $\widetilde{U}_m$) acts trivially on $\mathcal{H}^{\pm}$ and by right multiplication on $\widehat{B}^\times$. The double coset represented by a point $(x,g)\in\mathcal{H}^\pm\times\widehat{B}^\times$ is written $[(x,g)]$. The space $\mathcal{H}^{\pm}\cong\GL_2(\R)\cdot\vartheta$ can be identified with $\PP\defeq\Hom_{\R}(\C,B_\infty)$, over which $B^\times$ acts by conjugation, as follows: since $\iota_K^B\in\Hom_{\Q}(K,B)\otimes_{\Q}\R\subseteq\PP$, a point $x=\gamma_x\cdot\vartheta$ is sent to $\iota_x\defeq\gamma_x^{-1}\iota_K^B \gamma_x\in\PP$. 

For any $m\ge 0$, both double coset spaces $B^\times\backslash\widehat{B}^\times/U_m$ and $B^\times\backslash\widehat{B}^\times/\widetilde{U}_m$ are singletons, so the Strong Approximation Theorem implies
\begin{equation}\label{eq.RiemannSurfaces}
X_m(\C)\cong\Gamma_m\backslash\mathcal{H}^{+}\hspace{10pt}\text{and}\hspace{10pt}\widetilde{X}_m(\C)\cong\widetilde{\Gamma}_m\backslash\mathcal{H}^{+},
\end{equation}
where $\Gamma_m$ (resp. $\widetilde\Gamma_m$) is the subgroup of norm $1$ elements in $B^\times\cap U_m$ (resp. $B^\times\cap\widetilde U_m$). This means that both $X_m(\C)$ and $\widetilde{X}_m(\C)$ are Riemann surfaces, thus being algebraic curves defined over $\C$. Furthermore, there are algebraic curves $X_m$ and $\widetilde{X}_m$ defined over $\Q$ whose $\C$-rational points match $X_m(\C)$ and $\widetilde{X}_m(\C)$ respectively, tying the notation together.

If $\tau\in K\cap\mathcal{H}^{\pm}$ is a CM point, then $\iota_\tau\in\Hom_{\Q}(K,B)$ fixes $\tau$, so that $i_\infty(\iota_\tau(z))\binom{\tau}{1}=z\binom{\tau}{1}$ for all $z\in K$. Explicitly, a CM class $[(\iota_\tau,g)]$ maps under strong approximation to $[b^{-1}\cdot\tau]$, where $g=bu$ with $b\in B^\times$ and $u\in U_m$ (resp. $\widetilde{U}_m$).

\subsection{Heegner points}\label{subsec.HeegnerPoints}
A \textit{Heegner point} of conductor $cp^n$ in $X_m$ is a class $[(\iota_x,g)]\in X_m$ where $g\in\widehat{B}^\times$ and $\iota_x$ is an optimal embedding of $\mathcal{O}_{cp^n}$ into the Eichler order $g^{-1}U_mg\cap B$, that is, $\iota_x^{-1}(g^{-1}U_mg\cap B)=\mathcal{O}_{cp^n}$. The finite index inclusion $U_m\supseteq\widetilde{U}_m$ induces a finite covering $\varsigma_m\colon\widetilde{X}_m(\C)\twoheadrightarrow X_m(\C)$. A \textit{Heegner point} of conductor $cp^n$ in $\widetilde{X}_m$ is a class $[(\iota_x,g)]\in\widetilde{X}_m$ such that $\varsigma_m([(\iota_x,g)])$ is a Heegner point of conductor $cp^n$ in $X_m$ and an additional local condition at $p$ is satisfied (see \cite[\S3]{LongoVigni.Manuscripta} for details).

We construct explicit Heegner points of conductor $cp^n$ in $\widetilde{X}_m$ (for $m\leq n$, condition needed only when $p$ is not split) associated to certain CM points by slightly modifying \cite{ChidaHsieh.2015, CastellaHsieh, ChidaHsieh.2018, CastellaLongo}. 
For each rational prime $\ell$ we define:
\begin{itemize}
\item $\xi_{cp^n}^{(\ell)}\defeq1$ if $\ell\nmid N^+cp$;
\end{itemize}
For primes dividing $N^+c$, let $s\defeq v_\ell(c)$ and set:
\begin{itemize}
\item $\xi_{cp^n}^{(\ell)}\defeq\delta^{-1}\left(\begin{smallmatrix}\vartheta & \bar{\vartheta} \\ 1 & 1\end{smallmatrix}\right)\left(\begin{smallmatrix}\ell^s & 1 \\ 0 & 1\end{smallmatrix}\right)\in \GL_2(K_\mathfrak{l})=\GL_2(\Q_\ell)$ if $\ell\mid N^+c$ splits as $\ell=\mathfrak{l}\bar{\mathfrak{l}}$ in $K$;
\item $\xi_{cp^n}^{(\ell)}=\left(\begin{smallmatrix}0 & -1 \\ 1 & 0\end{smallmatrix}\right)\left(\begin{smallmatrix}\ell^s & 0 \\ 0 & 1\end{smallmatrix}\right)\in \GL_2(\Q_\ell)$ if $\ell\mid c$ does not split in $K$;
\end{itemize}
Finally, for $p$, we set:
\begin{itemize}
\item $\xi_{cp^n}^{(p)}=\left(\begin{smallmatrix}\vartheta & -1 \\ 1 & 0\end{smallmatrix}\right)\left(\begin{smallmatrix}p^n & 0 \\ 0 & 1\end{smallmatrix}\right)\in \GL_2(K_\mathfrak{p})=\GL_2(\Q_p)$ if $p=\mathfrak{p}\bar{\mathfrak{p}}$ splits in $K$;
\item $\xi_{cp^n}^{(p)}=\left(\begin{smallmatrix}0 & -1 \\ 1 & 0\end{smallmatrix}\right)\left(\begin{smallmatrix}p^n & 0 \\ 0 & 1\end{smallmatrix}\right)\in \GL_2(\Q_p)$ if $p$ does not split in $K$.
\end{itemize}

All those elements can be seen inside $\widehat{B}^\times$ under the isomorphisms $i_\ell^{-1}$ defined in \S\ref{subsec.QuaternionAlgebras}. Denote by $\xi_{cp^n}\in \widehat{B}^\times$ the element whose $\ell$-component is $\xi_{cp^n}^{(\ell)}$ for each prime $\ell$.

Define $x_{cp^n,m}\colon\Pic(\mathcal{O}_{cp^n})\rightarrow {X}_m(\C)$ by $[\mathfrak{a}]\mapsto[(\iota_K^B,a\xi_{cp^n})]$, where
for each class $[\mathfrak{a}]$ we take an element $\mathfrak{a}$ and write it as $\mathfrak{a}=a\widehat{\mathcal{O}}_{cp^n}\cap K$ for some $a\in \widehat{K}^\times$, seen as an element of $\widehat{B}^\times$ via the adelization of $\iota_K^B$. This definition is independent of the choice of $\mathfrak{a}$ and $a$, so one can replace $[\mathfrak{a}]$ in $x_{cp^n,m}([\mathfrak{a}])$ by just $\mathfrak{a}$ or $a$. More generally, the previous map can be lifted to
$\tilde{x}_{cp^n,m}\colon K^\times\backslash\widehat{K}^\times\rightarrow \widetilde{X}_m(\C)$ by setting $\tilde{x}_{cp^n,m}(a)\defeq[(\iota_K^B,a\xi_{cp^n})]$.

\begin{proposition}\label{prop.HeegnerPoint}
For any $[\mathfrak{a}]\in\Pic(\mathcal{O}_{cp^n})$, the point $x_{cp^n,m}([\mathfrak{a}])\in X_m(H_{cp^n})$ is a Heegner point of conductor $cp^n$.
\end{proposition}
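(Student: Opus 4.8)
The plan is to verify the two assertions separately: first that $x_{cp^n,m}([\mathfrak a])$ lies in $X_m(H_{cp^n})$ (a field-of-definition statement handled by the Shimura reciprocity law), and second that the underlying CM point carries an optimal embedding of $\mathcal O_{cp^n}$ into the relevant Eichler order (a purely local computation with the explicit elements $\xi_{cp^n}^{(\ell)}$). For the first part I would recall that CM points on $X_m$ attached to the fixed embedding $\iota_K^B$ are algebraic and defined over $K^{\mathrm{ab}}$, with Galois action described via $\operatorname{rec}_K$: for $\sigma_b=\operatorname{rec}_K(b)$ with $b\in\widehat K^\times$, one has $\sigma_b\cdot[(\iota_K^B,g)]=[(\iota_K^B,bg)]$ (using the geometric normalization fixed in Remark~\ref{rem.Reciprocity}, and the compatibility $\iota_K^{\M_2(K)}=\iota_B^{\M_2(K)}\circ\iota_K^B$ that was arranged in \S\ref{subsec.QuaternionAlgebras}). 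Then $x_{cp^n,m}([\mathfrak a])$ is fixed by $\sigma_b$ exactly when $[(\iota_K^B,ba\xi_{cp^n})]=[(\iota_K^B,a\xi_{cp^n})]$ in $X_m(\C)$, i.e. when $b\in K^\times\cdot(a\,\widehat R_m^{\times}\xi_{cp^n}^{-1}\cdot$(centralizer)$\cdots)$; unwinding this, the stabilizer is precisely $K^\times\widehat{\mathcal O}_{cp^n}^\times$ once one knows $\xi_{cp^n}$ conjugates $\iota_K^B(\widehat K^\times)\cap U_m$ down to $\iota_K^B(\widehat{\mathcal O}_{cp^n}^\times)$, which is the optimal-embedding statement. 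By class field theory $K^\times\backslash\widehat K^\times/\widehat{\mathcal O}_{cp^n}^\times\cong\operatorname{Pic}(\mathcal O_{cp^n})\cong\Gal(H_{cp^n}/K)$, so the field of definition is exactly $H_{cp^n}$, and moreover the Galois action matches the $\operatorname{Pic}(\mathcal O_{cp^n})$-parametrization $[\mathfrak a]\mapsto x_{cp^n,m}([\mathfrak a])$.

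For the optimal-embedding statement I would work place by place. Away from $N^+cp$ we have $\xi_{cp^n}^{(\ell)}=1$ and $i_\ell(\mathcal O_K\otimes\Z_\ell)\subseteq\M_2(\Z_\ell)$ was arranged in \S\ref{subsec.QuaternionAlgebras}; since $\mathcal O_{cp^n}\otimes\Z_\ell=\mathcal O_K\otimes\Z_\ell$ there, optimality $\iota_K^B{}^{-1}(U_m\cap B)\otimes\Z_\ell=\mathcal O_{cp^n}\otimes\Z_\ell$ is immediate because the Eichler condition at $\ell$ is vacuous. At a split prime $\ell=\mathfrak l\bar{\mathfrak l}\mid N^+c$ with $s=v_\ell(c)$, one computes directly that conjugation of $\iota_K^{\M_2(\Q_\ell)}$ by $\xi_{cp^n}^{(\ell)}=\delta^{-1}\smallmat{\vartheta}{\bar\vartheta}{1}{1}\smallmat{\ell^s}{1}{0}{1}$ sends $\mathcal O_K\otimes\Z_\ell\cong\Z_\ell\times\Z_\ell$ into $\M_2(\Z_\ell)$ in such a way that the preimage of the upper-triangular-mod-$\ell^{v_\ell(N^+)}$ order, intersected back with $K$, is exactly $\Z_\ell+\ell^s(\mathcal O_K\otimes\Z_\ell)=\mathcal O_{cp^n}\otimes\Z_\ell$: the first matrix diagonalizes the $K$-action (the columns $\smallmat{\vartheta}{1}$, $\smallmat{\bar\vartheta}{1}$ are eigenvectors), and the second matrix $\smallmat{\ell^s}{1}{0}{1}$ produces precisely the conductor-$\ell^s$ shrinking while keeping the embedding optimal (no larger order maps into the Eichler order). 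At an inert or ramified $\ell\mid c$ the analogous computation with $\smallmat{0}{-1}{1}{0}\smallmat{\ell^s}{0}{0}{1}$ gives the same conductor bookkeeping, using that $K_\ell/\Q_\ell$ is a field so $\mathcal O_K\otimes\Z_\ell$ is the maximal order and $\mathcal O_{cp^n}\otimes\Z_\ell=\Z_\ell+\ell^s\mathcal O_{K_\ell}$. Finally at $p$ the element $\xi_{cp^n}^{(p)}$ (either $\smallmat{\vartheta}{-1}{1}{0}\smallmat{p^n}{0}{0}{1}$ or $\smallmat{0}{-1}{1}{0}\smallmat{p^n}{0}{0}{1}$) is designed so that the conjugated embedding meets the level-$p^m$ Eichler order $R_m$ optimally in $\mathcal O_{cp^n}$; here one uses $m\le n$ so that $p^m\mid p^n$ and the Eichler level is absorbed into the conductor. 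Collecting the local computations via the Chinese Remainder isomorphism $\bar\imath_M$ of \S\ref{subsec.QuaternionAlgebras} gives $\iota_K^B{}^{-1}\bigl((a\xi_{cp^n})^{-1}U_m(a\xi_{cp^n})\cap B\bigr)=\mathcal O_{cp^n}$, i.e. $\iota_{K}^B$ is an optimal embedding of $\mathcal O_{cp^n}$ into the Eichler order cut out by $a\xi_{cp^n}$; note conjugation by the $K$-adelic element $a$ does not affect optimality since $a$ commutes with $\iota_K^B(K)$.

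I expect the main obstacle to be the bookkeeping at $p$ in the non-split case and the verification that the point genuinely lies on $\widetilde X_m$ (the "additional local condition at $p$" alluded to after Proposition~\ref{prop.HeegnerPoint} in \S\ref{subsec.HeegnerPoints}); the statement as phrased is about $X_m$, so for the proposition itself this subtlety does not intervene, but one must be careful that the hypothesis $m\le n$ is exactly what makes the $\xi_{cp^n}^{(p)}$ choice compatible with the level at $p$. A secondary point requiring care is the correct normalization in the Shimura reciprocity law: with the geometric normalization of $\operatorname{rec}_K$ the identification $\Gal(H_{cp^n}/K)\cong\operatorname{Pic}(\mathcal O_{cp^n})$ is the one under which $x_{cp^n,m}$ is Galois-equivariant, and one should double-check the direction of the action so that the Euler system norm relations stated later in the introduction come out with the right Hecke eigenvalues. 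Everything else — the eigenvector computations for $\iota_K^{\M_2(\Q)}(\vartheta)=\smallmat{\operatorname{trace}_{K/\Q}(\vartheta)}{-\operatorname{norm}_{K/\Q}(\vartheta)}{1}{0}$ and the conductor arithmetic — is routine matrix manipulation over the local rings $\Z_\ell$.
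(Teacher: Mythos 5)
Your proof follows essentially the same route as the paper: the heart of the argument is the prime-by-prime verification that conjugating $\iota_K^B$ by the explicit local elements $\xi_{cp^n}^{(\ell)}$ yields an optimal embedding of $\mathcal{O}_{cp^n}\otimes\Z_\ell$ into the local Eichler order (maximality away from $N^+cp$, diagonalization via the eigenvectors of $\iota_K^{\M_2(\Q)}(\vartheta)$ at split primes, and the analogous matrix computation at non-split primes and at $p$), which is exactly the computation the paper carries out. Your additional paragraph on rationality over $H_{cp^n}$ via Shimura reciprocity and $K^\times\backslash\widehat K^\times/\widehat{\mathcal O}_{cp^n}^\times\cong\Gal(H_{cp^n}/K)$ is correct but is left implicit in the paper's proof, which only records the optimal-embedding check.
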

\begin{proof}
We check for each prime $\ell$ if, writing $\xi\defeq \xi_{cp^n}^{(\ell)}$ and $\iota_\xi\defeq \xi^{-1}\iota_K^B\xi$, we have \[\iota_\xi(K_\ell)\cap (U_m\otimes\Z_\ell) = \iota_\xi(\mathcal{O}_{cp^n}\otimes \Z_\ell).\]
For $\ell\nmid N^+cp$, both orders $U_m\otimes\Z_\ell$ and $\mathcal{O}_{cp^n}\otimes \Z_\ell$ are maximal, thus the image of the latter is contained in the earlier. Next we consider $\ell\mid N^+cp$;

For a split prime $\ell=\mathfrak{l}\bar{\mathfrak{l}}\neq p$ (case $\ell=p$ is similar), where $\mathfrak{l}$ is the prime corresponding to the fixed embedding $\iota_\ell\colon\overline{\Q}\hookrightarrow\overline{\Q}_\ell$ from the beginning of the chapter, we have that $K_\ell\defeq K\otimes_{\Q}\Q_\ell$ splits as $K_\mathfrak{l}\oplus K_{\bar{\mathfrak{l}}}\cong\Q_\ell e_{\mathfrak{l}}\oplus\Q_\ell e_{\bar{\mathfrak{l}}}$, where \[ e_\mathfrak{l}=\frac{1\otimes\vartheta-\vartheta\otimes1}{(\vartheta-\bar{\vartheta})\otimes 1}\hspace{15pt}\text{and}\hspace{15pt}e_{\bar{\mathfrak{l}}}=\frac{\vartheta\otimes 1-1\otimes\bar{\vartheta}}{(\vartheta-\bar{\vartheta})\otimes1}.\] One then computes that for each $(a+b\vartheta)\otimes x\in K\otimes_\Q\Q_\ell$,
\[\iota_\xi((a+b\vartheta)\otimes x)=\begin{pmatrix}xa+xb\vartheta & \ell^{-s}xb\delta \\ 0 & xa+xb\bar{\vartheta}\end{pmatrix}=(xa+xb\vartheta)\iota_\xi(e_\mathfrak{l})+(xa+xb\bar{\vartheta})\iota_\xi(e_{\bar{\mathfrak{l}}}),\]
showing that $\iota_\xi(K_\ell)\cap (U_m\otimes\Z_\ell)$ consists of elements of the form \[(xa+xb\vartheta)e_\mathfrak{l}+(xa+xb\bar{\vartheta})e_{\bar{\mathfrak{l}}}\in\Z_\ell e_\mathfrak{l}\oplus\Z_\ell e_{\bar{\mathfrak{l}}}\] with $xb\in\ell^s\Z_\ell$, thus of elements in $\mathcal{O}_{cp^n}\otimes_\Z\Z_p$;

For a non-split prime $\ell$, one computes that for each $(a+b\vartheta)\otimes x\in K_\ell=K\otimes_\Q\Q_\ell$, \[\iota_\xi((a+b\vartheta)\otimes x)=\begin{pmatrix}xa & \ell^{-s}xb \\ xb\vartheta\bar{\vartheta}\ell^s & x(a+b(\vartheta+\bar{\vartheta}))\end{pmatrix},\] so elements $xa+xb\vartheta$ in $\iota_\xi(K_\ell)\cap (U_m\otimes\Z_\ell)$ are of the form $xa\in\Z_\ell$ and $xb\in\ell^s\Z_\ell$. This shows that, for all $s'\leq 2s$, $\iota_\xi$ is an optimal embedding into an Eichler order of level $\ell^{s'}$, showing both the cases inert ($s'=s$, when $\ell\mid c^{-}$ or $\ell=p$ inert) and ramified ($s'=2s$, when $\ell=p$ is ramified).\qedhere
\end{proof}
Direct verification of the local condition at $p$ shows that $\widetilde{x}_{cp^n,m}(a)$ is also defined over $H_{cp^n}$.

\subsection{Idempotents}\label{subsec.Idempotents}
The proof of Proposition \ref{prop.HeegnerPoint} motivates the choice of the following idempotents in $K\otimes_\Q K$:
\[e=\frac{1\otimes\vartheta-\vartheta\otimes1}{(\vartheta-\bar{\vartheta})\otimes 1}\hspace{15pt}\text{and}\hspace{15pt}\bar{e}=\frac{\vartheta\otimes 1-1\otimes\bar{\vartheta}}{(\vartheta-\bar{\vartheta})\otimes1}.\]  Via direct computation, one can show that those elements are orthogonal idempotents: \[e^2=e,\hspace{5pt} \bar{e}^2=\bar{e},\hspace{5pt} e\bar{e}=0\hspace{5pt}\text{and}\hspace{5pt}e+\bar{e}=1.\]
Locally at all split primes $\ell\mid N^+p$ (including $\ell=p$ in case $p$ splits), their local realizations at splitting places are the idempotent matrices $\left(\begin{smallmatrix}1 & 0 \\ 0 & 0\end{smallmatrix}\right)$ and $\left(\begin{smallmatrix}0 & 0 \\ 0 & 1\end{smallmatrix}\right)$: if we again consider the splitting $\ell=\mathfrak{l}\bar{\mathfrak{l}}$, we have that $K\otimes_{\Q} K$ embeds into $K\otimes_{\Q}K_\mathfrak{l}\cong K\otimes_{\Q}\Q_\ell$ thus obtaining a map \[j_\ell\colon K\otimes_{\Q}K\longmono K\otimes_{\Q}\Q_\ell\longmono B_\ell\stackrel{i_\ell}{\longrightarrow}\mathrm{M}_2(\Q_\ell)\] satisfying $j_\ell(e)=\big(\begin{smallmatrix}1 & 0 \\ 0 & 0\end{smallmatrix}\big)$ and $j_\ell(\bar{e})=\big(\begin{smallmatrix}0 & 0 \\ 0 & 1\end{smallmatrix}\big)$. We also have a global map
\[j\colon K\otimes_\Q K \longmono B\otimes_\Q K\stackrel{I_B}{\longrightarrow} \M_2(K)\] under which  $j(e)=\big(\begin{smallmatrix}1 & 0 \\ 0 & 0\end{smallmatrix}\big)$ and $j(\bar{e})=\big(\begin{smallmatrix}0 & 0 \\ 0 & 1\end{smallmatrix}\big)$.

If $p$ is inert or ramified, the images of $e$ and $\bar{e}$ are not yet $\left(\begin{smallmatrix}1 & 0 \\ 0 & 0\end{smallmatrix}\right)$ and $\left(\begin{smallmatrix}0 & 0 \\ 0 & 1\end{smallmatrix}\right)$ -- to achieve this we conjugate the image of the map above by $\mathcal{M}\defeq\left(\begin{smallmatrix}\bar{\vartheta} & \vartheta \\ 1 & 1\end{smallmatrix}\right)$ via an isomorphism $i_\mathcal{M}$, obtaining \[j_p\colon K\otimes_{\Q}K\longmono K\otimes_{\Q} K_\mathfrak{p}\longmono B_p\stackrel{i_p}{\longrightarrow}\mathrm{M}_2(K_\mathfrak{p})\stackrel{i_\mathcal{M}}{\longrightarrow}\mathrm{M}_2(K_\mathfrak{p})\]
again satisfying $j_p(e)=\big(\begin{smallmatrix}1 & 0 \\ 0 & 0\end{smallmatrix}\big)$ and $j_p(\bar{e})=\big(\begin{smallmatrix}0 & 0 \\ 0 & 1\end{smallmatrix}\big)$; notice that the notation $K_\mathfrak{p}$ correspond to different objects in each case. Likewise, we have a global map \[j\colon K\otimes_\Q K \longmono B\otimes_\Q K\stackrel{I_B}{\longrightarrow} \M_2(K)\stackrel{i_\mathcal{M}}{\longrightarrow} \M_2(K),\] and again we find that $j(e)=\big(\begin{smallmatrix}1 & 0 \\ 0 & 0\end{smallmatrix}\big)$ and $j(\bar{e})=\big(\begin{smallmatrix}0 & 0 \\ 0 & 1\end{smallmatrix}\big)$.
\begin{remark}
Some references, such as \cite{Brooks}, fix a \textit{Hashimoto model} for $B$, that is, the choice of a certain $\Q$-basis $\{1,s_1,s_2,s_1s_2\}$ and a totally real quadratic field $M$ defined after it which splits $B$. This allows the definition of a global idempotent $e\in K\otimes_{\Q} B$ that is fixed by the involution of $B$ given by $b\mapsto s_1^{-1}\bar{b} s_1$.
\end{remark}

\subsection{Moduli of QM abelian surfaces}\label{subsec.Moduli}
We recall from \cite[\S1--4]{Buzzard} some results on the representability of moduli problems of QM abelian surfaces. Let $M$ be a positive integer coprime with $N^-$, $S$ be a $\Z[1/N^-]$-scheme and $(A,\iota)$ a QM abelian surface over $S$.

\subsubsection{Naïve level structures}\label{subsec.NaiveLevelStructures}
A \textit{naïve full level $M$ structure} on $A$ is an isomorphism \[\alpha\colon\mathcal{O}_B\otimes_\Z(\Z/{M}\Z)\stackrel{\sim}{\longrightarrow} A[{M}]\] of $S$-group schemes which commutes with the left actions of $\mathcal{O}_B$ given by $\iota$ on $A[M]$, and the multiplication from the left of $\mathcal{O}_B$ on the constant $S$-group scheme $\mathcal{O}_B\otimes_\Z(\Z/{M}\Z)$. 

Denote by $r_g$ the right multiplication action of $g\in(\mathcal{O}_B\otimes(\Z/M\Z))^\times$ over $(\mathcal{O}_B\otimes(\Z/M\Z))^\times$. If $\alpha$ is a naïve full level $M$ structure on $A$, then so is $\alpha_g\defeq\alpha\circ r_g$, giving a \textit{left} action $g\cdot\alpha\defeq\alpha_g$ of $(\mathcal{O}_B\otimes_\Z(\Z/M\Z))^\times$ on the set of na\"{\i}ve full level $M$ structures on $(A,\iota)$. For any subgroup $U$ composing the action of $(\mathcal{O}_B\otimes_\Z(\Z/M\Z))^\times$ with the canonical projection \[\widehat{\pi}_M\colon\widehat{\mathcal{O}}_B^\times\longepi(\mathcal{O}_B\otimes_\Z(\Z/M\Z))^\times\] gives a left action of $U$ on the set of naïve full level $M$ structures. An equivalence class of the action of $U$ is a \emph{naïve level $U$ structure} $\alpha$ on $A$. We write $(A,\iota,\alpha)$ when equipping $(A,\iota)$ with a naïve level $U$ structure $\alpha$. An \textit{isogeny} (resp. \textit{isomorphism}) between two such triples $(A,\iota,\alpha)$ and $(A',\iota',\alpha')$ is an isogeny (resp. isomorphism) of QM abelian surfaces $\varphi\colon A\rightarrow A'$ such that $\varphi\circ\alpha=\alpha'$.

\begin{proposition}\label{prop.ModuliNaive}
The functor taking a $\Z[1/(MN^-)]$-scheme $S$ to the set of isomorphism classes of such triples $(A,\iota,\alpha)$ over $S$ is representable 
by a  projective, smooth, of relative dimension $1$ and geometrically connected $\Z[1/(MN^-)]$-scheme $\mathcal{X}_U$ (\cite[Theorem 2.1]{Buzzard}).
\end{proposition}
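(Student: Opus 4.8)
This is \cite[Theorem 2.1]{Buzzard}; here is the shape of the argument one would run. The plan is to first handle the \emph{rigid} case of a full naïve level $M$ structure (before quotienting by $U$), realise the representing object as a locally closed subscheme of a Siegel moduli space, check properness and smoothness of relative dimension $1$ by potential good reduction and deformation theory, and then descend to a naïve level $U$ structure by a finite quotient. For $M\ge 3$ a triple $(A,\iota,\alpha)$ with a full naïve level $M$ structure admits no nontrivial automorphisms, so the corresponding moduli problem $\mathcal M_M$ is rigid. By \S\ref{subsec.QMAbelianSurfaces} every QM abelian surface $(A,\iota)$ carries a canonical principal polarization $\lambda$ whose Rosati involution restricts to $\dagger$ on $\mathcal O_B$; together with a fixed pairing on $\mathcal O_B\otimes\Z/M\Z$, the Weil pairing attached to $\lambda$ turns $\alpha$ into a symplectic level $M$ structure on the underlying principally polarized abelian surface (up to the similitude factor of $\alpha$, which the $\mathcal O_B$-equivariance constrains). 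This exhibits $\mathcal M_M$ as a subfunctor of the Siegel problem $\mathcal A_{2,1,M}$ over $\Z[1/M]$: the remaining datum is a ring homomorphism $\mathcal O_B\to\End(A)$ with Rosati involution $\dagger$, and since $\Hom$-schemes of abelian schemes are unramified of finite type and these compatibilities are locally closed conditions, $\mathcal X_M\to\mathcal A_{2,1,M}$ is relatively representable by an unramified, finite type scheme. Hence $\mathcal X_M$ is a quasi-projective $\Z[1/M]$-scheme, and after restricting the base to $\Z[1/N^-]$ the QM structure exists in every residue characteristic, as $B$ splits outside $N^-$.

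For properness one runs the valuative criterion: given a DVR $R$ over $\Z[1/(MN^-)]$ with fraction field $L$ and a point over $L$, the essential input is that QM abelian surfaces have \emph{potentially good reduction} --- a nontrivial toric part in the Néron special fibre would, via the structure of the $\ell$-adic Tate module as a free rank one $B_\ell$-module, force totally toric reduction, which is incompatible with $\End^0(A)\supseteq B$ being a division algebra (equivalently: $B^\times/\Q^\times$ is anisotropic). After a finite extension $R'/R$, $A$ extends to an abelian scheme over $R'$; $\iota$, $\lambda$ and $\alpha$ extend uniquely by the Néron property, unramifiedness of $\Hom$-schemes, and étaleness of $\mathcal A[M]$, and rigidity ($M\ge 3$) makes the descent datum to $R$ effective, uniqueness being that of Néron models. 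Thus $\mathcal X_M$ is proper, hence projective. Smoothness of relative dimension $1$ is deformation theory: by Serre--Tate and Grothendieck--Messing, deformations of $(A,\iota,\lambda,\alpha)$ over an Artinian thickening are the liftings of the Hodge filtration compatible with the $\mathcal O_B$- and polarization structures (the étale level structure imposing nothing), and since $\mathcal O_B\otimes\Z_\ell\cong\M_2(\Z_\ell)$ for $\ell\nmid N^-$, Morita equivalence identifies this with the formally smooth, one-dimensional deformation problem of a single elliptic curve.

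Geometric connectedness follows from the complex uniformisation: by the Strong Approximation Theorem for $B^\times$, exactly as in \eqref{eq.RiemannSurfaces}, $\mathcal X_M(\C)$ is a connected quotient $\Gamma\backslash\mathcal H^+$ (the $\mathcal O_B$-equivariance of the naïve level structure pinning the relevant similitude component), and connectedness of one geometric fibre propagates to all of them since $\mathcal X_M\to\Spec\Z[1/(MN^-)]$ is proper and smooth. Finally, for a general $U\subseteq\widehat{\mathcal O}_B^\times$, take $M$ large enough to rigidify: the finite group $\widehat\pi_M(U)$ acts on the quasi-projective scheme $\mathcal X_M$, the quotient $\mathcal X_U$ exists as a scheme and inherits projectivity, smoothness of relative dimension $1$ and geometric connectedness, and it represents the level $U$ functor (the usual action-freeness subtlety is dealt with by an auxiliary rigidifying prime-to-$MN^-$ level). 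The one genuinely nontrivial point is the potential good reduction used for properness: this is where hypothesis \ref{cond.Heg3} (that $B$ is a division algebra) enters, and it is exactly what makes these Shimura curves compact, in contrast to the open modular curves of the classical Heegner-point setting; the rest is the standard PEL package applied \emph{mutatis mutandis}.
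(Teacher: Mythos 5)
The paper itself gives no proof of this proposition: it is quoted from Buzzard's Theorem 2.1, and Remark \ref{rem.Representability} records how the cited proof is assembled (Morita for full na\"ive level $M$ structures, Boutot's representability combined with Buzzard's rigidity lemma for general $U$, with the hypothesis $N^{+}\ge 4$ of \ref{cond.Heg2} doing the rigidifying work). Your sketch follows the same standard PEL route as those references --- rigidify, embed into a Siegel space via the canonical principal polarization of \S\ref{subsec.QMAbelianSurfaces}, properness by potential good reduction using that $B$ is a division algebra, smoothness by Serre--Tate/Grothendieck--Messing and Morita equivalence, then a finite quotient --- so there is no divergence of method to report. Two steps, however, do not work as written.

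Geometric connectedness: your auxiliary curve of full na\"ive level $M$ is \emph{not} geometrically connected. By strong approximation the geometric components of its complex fibre are indexed by $\widehat{\Z}^\times/\mathrm{nr}(U)$, where $\mathrm{nr}$ is the reduced norm (equivalently, comparing the Weil pairing of the canonical principal polarization with a fixed pairing on $\mathcal{O}_B\otimes\Z/M\Z$ attaches to $\alpha$ a primitive $M$-th root of unity); for the principal level $M$ subgroup this index set is $(\Z/M\Z)^\times$, which is nontrivial exactly in the rigid range $M\ge 3$. So the assertion that $\mathcal{X}_M(\C)$ is a single quotient $\Gamma\backslash\mathcal{H}^{+}$ is false, and the $\mathcal{O}_B$-equivariance of $\alpha$ does not ``pin the similitude component'' --- that ambiguity is precisely the $(\Z/M\Z)^\times$ above. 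Connectedness of $\mathcal{X}_U$ therefore cannot be inherited from the auxiliary level by quotienting as stated; it holds exactly when $\mathrm{nr}(U)=\widehat{\Z}^\times$ (equivalently, when $U$ acts transitively on the components of $\mathcal{X}_M$), which is the case for the $U_0$/$U_1$-type levels used in this paper and is what makes the double cosets in \S\ref{subsec.ShimuraCurves} singletons. The propagation from one geometric fibre to all of them via properness and smoothness is fine, but it must be run for $\mathcal{X}_U$ itself after this check on $\mathrm{nr}(U)$.

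Representability at level $U$: the quotient of the rigid full-level curve by the finite group through which $U$ acts always exists as a scheme, but it represents the level $U$ functor only if the level $U$ problem is itself rigid; otherwise it is merely a coarse space (for $U$ too large every object carries the nontrivial automorphism $-1$). Your parenthetical about an auxiliary rigidifying prime-to-$MN^-$ level constructs the quotient but does not supply this rigidity. This is exactly where the hypothesis $N^{+}\ge 4$ of \ref{cond.Heg2} enters, as Remark \ref{rem.Representability} stresses: automorphisms of $(A,\iota)$ are roots of unity in an imaginary quadratic order, and fixing a $U_1(N^{+})$-type structure with $N^{+}\ge 4$ forces them to be the identity. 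Making that smallness hypothesis on $U$ explicit is needed for your final step to prove the proposition as intended.
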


\begin{remark}\label{rem.Representability}
The representability result above is due to Morita \cite[Main Theorem 1]{Morita} in the case of na\"{\i}ve full level $M$ structures. The proof of the general case in \cite[\S2]{Buzzard} invokes the representability result of \cite[Theorem \S14, Exposé III]{Boutot}. Both the original and the general case require $N^{+}\ge 4$, justifying condition \ref{cond.Heg2} in \S\ref{subsec.GeneralizedHeegnerHypothesis}; see also \cite[\S4]{DiamondTaylor} and \cite[Theorem 2.2]{Brooks}. 
\end{remark}

\begin{remark}
We consider specially the naïve level $U$ structures given by
\[U_0(M)\defeq\{g\in\widehat{\mathcal{O}}_B^\times:\ \exists a,b,d\text{ such that } \bar{\imath}_M\circ\widehat{\pi}_M(g)\equiv\left(\begin{smallmatrix}a & b \\ 0 & d\end{smallmatrix}\right)\mod M\},\] and its subgroup \[U_1(M)\defeq\{g\in\widehat{\mathcal{O}}_B^\times:\ \exists b,d\text{ such that } \bar{\imath}_M\circ\widehat{\pi}_M(g)\equiv\left(\begin{smallmatrix}1 & b \\ 0 & d\end{smallmatrix}\right)\mod M\}.\]
The levels of the Shimura curves defined in \S\ref{subsec.ShimuraCurves} relate to the levels above as follows: \[U_m= U_0(N^{+}p^m)\hspace{10pt}\text{and}\hspace{10pt} \widetilde{U}_m= U_0(N^{+})\cap U_1(p^m).\] The curves $X_m$ and $\widetilde{X}_m$ are the generic fibers of $\mathcal{X}_m=\mathcal{X}_{U_m}$ and $\widetilde{\mathcal{X}}_m=\mathcal{X}_{\widetilde{U}_m}$, respectively.
\end{remark}
 
\subsubsection{Drinfeld level structures}\label{subsubsec.Drinfeld}
Let $m\ge 1$ and $\ell$ be a prime such that $\ell\nmid N^{-}$. Consider idempotents $e_\ell=\smallmat{1}{0}{0}{0}$ and $\bar{e}_\ell=\smallmat{0}{0}{0}{1}$ in $\M_2(\Z/\ell\Z)$ (for $\ell\mid N^+p$, these can be taken to be $e$ and $\bar{e}$ from \S\ref{subsec.Idempotents} via $j_\ell$). The kernel of the action of $e_\ell$ on $A[\ell^m]$ is isomorphic as a group scheme to the image of the action of $\bar{e}_\ell$ on $A[\ell^m]$ and vice-versa. Conjugation by $w_\ell=\left(\begin{smallmatrix}0 & 1 \\ 1 & 0\end{smallmatrix}\right)$ gives an isomorphism of group schemes between $\ker(e_\ell)$ and $\ker(\bar{e}_\ell)$. Therefore, we have a decomposition into isomorphic factors
\begin{equation}\label{eq.DecompositionApm}
A[\ell^m]=\ker(e_\ell)\oplus \ker(\bar{e}_\ell).
\end{equation}
We endow $(A,\iota)$ with a \emph{level $\Gamma_1(\ell^m)$ structure} consisting of a cyclic finite flat $S$-subgroup scheme $H$ of $\ker(e_\ell)$ which is locally free of rank $\ell^m$ and $P$ a generator of $H$ ($P\in H$ such that $\{P,\dots,\ell^m P\}$ is a full set of sections, see \cite[\S 3]{Buzzard}, \cite[\S 1.8]{KatzMazur}). We denote by $(A,\iota,\alpha,(H,P))$ the QM abelian variety $(A,\iota)$ equipped with a naïve level $U$ structure $\alpha$ and a level $\Gamma_1(\ell^m)$ structure $(H,P)$. An \textit{isogeny} (resp. \textit{isomorphism}) between two such quadruples $(A,\iota,\alpha,(H,P))$ and $(A',\iota',\alpha',(H',P'))$ is an isogeny (resp. isomorphism) of QM abelian surfaces $\varphi\colon A\to A'$ such that $\varphi\circ\alpha=\alpha'$, $\varphi(H)=H'$ and $\varphi(P)=P'$. A \emph{level $\Gamma_0(\ell^m)$ structure} can be defined by dropping the choice of a generator $P$.

\begin{proposition}\label{prop.ModuliDrinfeld}
The functor which takes a $\Z_{(\ell)}$-scheme $S$ to the set of isomorphism classes of such quadruplets $(A,\iota,\alpha,(H,P))$ over $S$ is representable by a $\Z_{(\ell)}$-scheme $\mathcal{X}_{U,\Gamma_1(\ell^m)}$, which is proper and finite over $\mathcal{X}_U$, here viewed as a $\Z_{(\ell)}$-scheme. 
Moreover, there is a canonical isomorphism of $\Q$-schemes between the generic fiber of 
$\mathcal{X}_{U,\Gamma_1(\ell^m)}$ and the generic fiber of 
$\mathcal{X}_{U\cap U_1(\ell^m)}$ (for $m=1$, this is \cite[Proposition 4.1]{Buzzard}. The same proof works for $m\ge 1$.) 
\end{proposition}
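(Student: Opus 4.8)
The plan is to imitate the proof of \cite[Proposition 4.1]{Buzzard} for $m=1$, which reduces the statement to the Katz--Mazur theory of Drinfeld level structures on finite locally free group schemes \cite[\S1]{KatzMazur}; since that theory makes no reference to the order of the group scheme, the argument will be uniform in $m\ge1$. As a first step I would record that, by Proposition \ref{prop.ModuliNaive} together with the rigidity of the moduli problem of na\"{\i}ve level $U$ structures (Remark \ref{rem.Representability}), $\mathcal{X}_U$ is a fine moduli space, hence carries a universal QM abelian surface $\pi\colon\mathcal{A}\to\mathcal{X}_U$ with universal na\"{\i}ve level $U$ structure; as in the statement we regard $\mathcal{X}_U$ and $\mathcal{A}$ over $\Z_{(\ell)}$. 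The group scheme $\mathcal{A}[\ell^m]$ is finite locally free over $\mathcal{X}_U$ of rank $\ell^{4m}$, and the decomposition \eqref{eq.DecompositionApm} exhibits it as $\ker(e_\ell)\oplus\ker(\bar e_\ell)$ with the two summands interchanged by conjugation by $w_\ell$; being a direct summand (a retract) of a finite locally free group scheme, $\ker(e_\ell)$ is itself finite locally free over $\mathcal{X}_U$, of rank $\ell^{2m}$ --- the exact analogue of the $\ell^m$-torsion of an elliptic curve. By definition (\S\ref{subsubsec.Drinfeld}), a level $\Gamma_1(\ell^m)$ structure on $(\mathcal{A},\iota)$ is precisely a Drinfeld $\Gamma_1(\ell^m)$-structure on the finite locally free group scheme $\ker(e_\ell)$ over $\mathcal{X}_U$.

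Next I would establish relative representability: the functor sending an $\mathcal{X}_U$-scheme $T$ to the set of pairs $(H,P)$ with $H\subseteq\ker(e_\ell)_T$ a cyclic rank-$\ell^m$ subgroup and $P$ a Drinfeld generator of $H$ is representable by a scheme $\mathcal{X}_{U,\Gamma_1(\ell^m)}$ finite over $\mathcal{X}_U$. This is a direct application of the Katz--Mazur representability theorems for the scheme of cyclic subgroups and the scheme of generators of a cyclic group to the finite locally free group scheme $\ker(e_\ell)$, carried out exactly as in \cite[\S3--4]{Buzzard}; unwinding the definitions, $\mathcal{X}_{U,\Gamma_1(\ell^m)}$ represents the functor of quadruples $(A,\iota,\alpha,(H,P))$ of the statement. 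Since $\mathcal{X}_U$ is proper over $\Z_{(\ell)}$ by Proposition \ref{prop.ModuliNaive} and $\mathcal{X}_{U,\Gamma_1(\ell^m)}\to\mathcal{X}_U$ is finite, $\mathcal{X}_{U,\Gamma_1(\ell^m)}$ is finite over $\mathcal{X}_U$ and proper over $\Z_{(\ell)}$.

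It then remains to identify the generic fibers. Over $\Q$ the prime $\ell$ is invertible, so $\ker(e_\ell)$ becomes finite \'etale of rank $\ell^{2m}$, and on an \'etale group scheme a Drinfeld $\Gamma_1(\ell^m)$-structure $(H,P)$ is simply a section $P$ of exact order $\ell^m$, with $H=\langle P\rangle$. An explicit comparison of level structures --- the same one underlying the elliptic-curve case and \cite[Proposition 4.1]{Buzzard} --- shows that, via the idempotent $e_\ell$, the $U_1(\ell^m)$-orbit of a na\"{\i}ve full level $\ell^m$ structure $\alpha\colon\mathcal{O}_B\otimes_\Z(\Z/\ell^m\Z)\xrightarrow{\sim}A[\ell^m]$ records exactly such a datum, and conversely. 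As this identification is functorial in the test scheme and compatible with isomorphisms and isogenies, it yields a natural isomorphism between the moduli functor represented by the generic fiber of $\mathcal{X}_{U,\Gamma_1(\ell^m)}$ and that represented by the generic fiber of $\mathcal{X}_{U\cap U_1(\ell^m)}$, hence the asserted canonical isomorphism of generic fibers.

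The only step that is more than bookkeeping is the relative representability: one must be confident that the Katz--Mazur constructions, devised for the $\ell^m$-torsion of (generalized) elliptic curves, apply to $\ker(e_\ell)$. They do, because those constructions use solely that one works with a finite locally free commutative group scheme of $\ell$-power order over the base --- a property $\ker(e_\ell)/\mathcal{X}_U$ has --- and this is precisely how \cite[Proposition 4.1]{Buzzard} disposes of the case $m=1$, with the argument manifestly uniform in $m$.
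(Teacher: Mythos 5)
Your proposal is correct and follows essentially the same route as the paper, whose proof consists precisely of invoking \cite[Proposition 4.1]{Buzzard} and observing that the argument is uniform in $m$: you simply make explicit what that argument is, namely applying the Katz--Mazur theory of Drinfeld $\Gamma_1$-structures to the finite locally free rank-$\ell^{2m}$ group scheme $\ker(e_\ell)\subseteq\mathcal{A}[\ell^m]$ over the fine moduli scheme $\mathcal{X}_U$, and comparing with na\"{\i}ve level structures over $\Q$ where $\ker(e_\ell)$ is \'etale. The details you flag (relative representability and finiteness via the closed condition that $\{aP\}$ be a subgroup, and the generic-fiber identification with $\mathcal{X}_{U\cap U_1(\ell^m)}$) are exactly the content of Buzzard's proof and transfer verbatim to $m\ge 1$.
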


If $M=\ell_1^{d_1}\dots\ell_s^{d_s}$, one can make sense of a \textit{level $\Gamma_1(M)$ structure} in $(A,\iota)$ via the Chinese Remainder Theorem: a $S$-group scheme $H=\prod_{j=1}^sH_j$ such that $H_j$ is a cyclic finite flat $S$-subgroup scheme $H$ of $\ker(e_{\ell_j})$ which is locally free of rank $\ell_j^{d_j}$, and $P=(P_1,\dots,P_s)$ such that $P_j$ is a generator of $H_j$.

\subsubsection{Correspondece between naïve and Drinfeld level structures}
A simple extension of \cite[Lemma 4.4]{Buzzard} to higher powers of primes followed by the Chinese Remainder Theorem gives a bijection between the naïve level $U_0(M)$ structures (resp. $U_1(M)$) and level $\Gamma_0(M)$ structures (resp. $\Gamma_1(M)$) in $(A,\iota)$. Explicitely, a bijection between the latter and structures of level \[V_0(M)\defeq\{g\in\widehat{\mathcal{O}}_B^\times:\ \exists a,b,d\text{ such that } \bar{\imath}_M\circ\widehat{\pi}_M(g)\equiv\left(\begin{smallmatrix}a & b \\ 0 & d\end{smallmatrix}\right)\mod M\}\]\[\text{(resp. }V_1(M)\defeq\{g\in\widehat{\mathcal{O}}_B^\times:\ \exists a,b\text{ such that } \bar{\imath}_M\circ\widehat{\pi}_M(g)\equiv\left(\begin{smallmatrix}a & b \\ 0 & 1\end{smallmatrix}\right)\mod M\}\text{)}\] is described as follows: if $\alpha$ is a naïve level $V_1(M)$ structure, lift it to a naïve full level $M$ structure $\widetilde{\alpha}\colon\mathcal{O}_B\otimes (\Z/M\Z)\to A[M]$; then $\left(\widetilde{\alpha}\left(\smallmat{\ast}{0}{0}{0}\right),\widetilde{\alpha}\left(\smallmat{1}{0}{0}{0}\right)\right)$ is a level $\Gamma_1(M)$ structure. Conversely, if $(H,P)$ is a level $\Gamma_1(M)$ structure, replace $S$ by an étale surjective cover $\widetilde{S}/S$, using the fact that the functor taking $S$-schemes $\widetilde{S}/S$ to the set of naïve level $V_1(M)$ structures over $\widetilde{S}$ is étale (see \textit{loc. cit.}), over which an isomorphism $\widetilde{\alpha}\colon\mathcal{O}_B\otimes (\Z/M\Z)\stackrel{\sim}{\to}\M_2(\Z/M\Z)\stackrel{\sim}{\to} A[M]$ exists. Precomposing $\alpha$ with an automorphism of $\mathcal{O}_B\otimes (\Z/M\Z)$ mapping $(\widetilde{\alpha}^{-1}(H),\widetilde{\alpha}^{-1}(P))$ to $\left(\smallmat{\ast}{0}{0}{0},\smallmat{1}{0}{0}{0}\right)$ gives a naïve full level $M$ structure defined over $\widetilde{S}$, that can be projected onto a naïve full level $M$ structure $\alpha$ defined over $S$, and thus a level $V_1(M)$ structure induced by $\alpha$ (for $V_0(M)$ and $\Gamma_0(M)$ structures it is the same by forgetting the marked point). Finally, $g\mapsto\norm(g)g^{-1}$ (``transposing about the secondary diagonal'', up to signs) gives an anti-automorphism of $V_0(M)=U_0(M)$ that sends $V_1(M)$ to $U_1(M)$.

\subsubsection{Moduli classes corresponding to CM points}\label{subsec.CMpoints}
Associate to each $\tau\in\mathcal{H}^+$ the QM abelian variety \[A_\tau\defeq\dfrac{i_\infty(B_\infty)\binom{\tau}{1}}{i_\infty(\mathcal{O}_B)\binom{\tau}{1}},\] consisting of the four-dimensional real torus $\M_2(\R)/\iota_\infty(\mathcal{O}_B)$ with the complex structure induced by right multiplication action by the unique $J_\tau\in\M_2(\R)$ such that $J_\tau\binom{\tau}{1}=i\binom{\tau}{1}$. A point $\tau\in\mathcal{H}^+\cap K$ is said to be a \textit{CM point} if there exists an embedding $\iota_\tau\colon K\hookrightarrow B$ such that \[i_\infty\circ\iota_\tau(x)\cdot\binom{\tau}{1}=\binom{\tau}{1},\text{ for all }x\in K^\times,\] where the action is by Möbius transformations. Embedding the order $\mathcal{O}_\tau\defeq\Z\tau\oplus\Z$ into $\mathcal{O}_B$ via $\iota_\tau$, the Main Theorem of Complex Multiplication gives that the following isomorphisms are defined over the Hilbert class field $H_\tau$ of $\mathcal{O}_\tau$: \[A_\tau(H_\tau)=\dfrac{\iota_\tau(K)\otimes_{\Q}\R\binom{\tau}{1}}{\iota_\tau(\mathcal{O}_\tau)\binom{\tau}{1}}\cong\dfrac{\C\tau\oplus\C}{\mathcal{O}_\tau\tau\oplus\mathcal{O}_\tau}\cong\left(\dfrac{\C}{\mathcal{O}_\tau}\right)^{\oplus 2}.\]In particular, the CM point $\vartheta$ for the embedding $\iota_\vartheta=\iota_K^B$ fixed in \S\ref{subsec.QuaternionAlgebras} gives $A_\vartheta\cong(\C/\mathcal{O}_K)^{\oplus 2}$ over $H$.

Now consider the Heegner point $x_{cp^n,m}(1)=[(\iota_K^B,\xi_{cp^n})]\in\widetilde{X}_m$ from \S\ref{subsec.HeegnerPoints}. Via the decomposition $\xi_{cp^n}=b_{cp^n,m}u_{cp^n,m}$ into a product of $b_{cp^n,m}\in B^\times$ and $u_{cp^n,m}\in \widetilde{U}_m$, the Heegner point $[(\iota_K^B,\xi_{cp^n})]=[b_{cp^n,m}^{-1}\cdot(\iota_K^B,\xi_{cp^n})]=[(i_{cp^n,m},u_{cp^n,m})]$, with $i_{cp^n,m}\defeq b_{cp^n,m}^{-1}\iota_K^Bb_{cp^n,m}$, is associated to the CM point $\vartheta_{cp^n,m}\in\mathcal{H}^+\cap K$ fixed by $i_{cp^n,m}$, which is given by $\vartheta_{cp^n,m}=b_{cp^n,m}^{-1}\vartheta$ and is defined up to left multiplication by $B^\times$ (which comes from choosing another decomposition for $\xi_{cp^n}$).
\begin{remark}\label{rem.Factorization}
We are being overcautious in writing dependency on $m$ in the paragraph above: in fact, the CM point $\vartheta_{cp^n,m}$ corresponds in each moduli space $X_m$ (or $\widetilde{X}_m$) to the QM abelian variety $A_{{cp^n}}\defeq (\C/\mathcal{O}_{cp^n})^2$, related to $A_{\vartheta}$ through the cyclic isogeny $\phi_{cp^n}\colon A_{\vartheta}\to A_{{cp^n}}$ of degree $cp^n$: explicitely, if $(A_{\vartheta},\iota,\alpha,(H,P))$ is the moduli class in $\widetilde{X}_m$ corresponding to the CM point $\vartheta$, the moduli class corresponding to $\vartheta_{cp^n,m}$ in $\widetilde{X}_m$ is $(A_{{cp^n}},\iota\circ\phi_{cp^n}^\ast,\phi_{cp^n}\circ\alpha,(\phi_{cp^n}(H),\phi_{cp^n}(P)))$. This means that there is a CM point $\vartheta_{cp^n}$ that is an independent of $m$ common representative of all classes $[b^{-1}_{cp^n,m}\vartheta]$ as $m$ varies, and so there exists $b_{cp^n}$, and therefore $u_{cp^n}$ and $i_{cp^n}$ as above all independent of $m$. By dropping $m$ from the notation above, we are leaving understood that we are choosing such an independent of $m$ factorization of $\xi_{cp^n}$. In this case, $A_{\vartheta_{cp^n}}=A_{cp^n}$.
\end{remark}

\subsubsection{Arithmetic trivializations}
Let $(A,\iota)$ be a QM abelian surface over a $\Z_p$-scheme $S$ and $\mathbf{m}$ denote either an integer $m\in\Z_{\ge0}$ or the symbol $\infty$. Recall that $\mathcal{O}_{B,p}\defeq\mathcal{O}_B\otimes_{\Z}\Z_p$ acts on $A[p^{\mathbf{m}}]$ by quaternionic multiplication, and on $\boldsymbol{\mu}_{p^\mathbf{m}}\times\boldsymbol{\mu}_{p^\mathbf{m}}$, where $\boldsymbol{\mu}_{p^\mathbf{m}}$ is the $S$-group scheme of $p^\mathbf{m}$-th roots of unity, by left matrix multiplication via $i_p$. An \emph{arithmetic trivialization} on $A[p^{\mathbf{m}}]$ is an isomorphism of finite flat group schemes over $S$ \[\beta\colon\boldsymbol{\mu}_{p^{\mathbf{m}}}\times \boldsymbol{\mu}_{p^{\mathbf{m}}}\stackrel{\sim}{\longrightarrow} A[p^{\mathbf{m}}]^0\]
which is equivariant with respect to the action of $\mathcal{O}_{B,p}$, where $\smallbullet^0$ denotes the connected component of the identity of an $S$-group scheme. Via the decomposition into isomorphic factors \eqref{eq.DecompositionApm} induced by $e_p$ and $\bar{e}_p$ in the notation of \S\ref{subsubsec.Drinfeld} (which commutes with inverse limits and thus extends to ${\mathbf{m}}=\infty$), giving an arithmetic trivialization of $A[p^{\mathbf{m}}]$ is equivalent to giving an isomorphism 
$\beta\colon\boldsymbol{\mu}_{p^{\mathbf{m}}} \stackrel{\sim}{\rightarrow} e_pA[p^{\mathbf{m}}]^0$ of finite flat connected group schemes over $S$, equivariant for the action of $e_p\mathcal{O}_{B,p}$.

\begin{remark}\label{rem.OrdinaryReduction}
The existence of an arithmetic trivialization on $A[p^\infty]$ implies that $A$ is an ordinary abelian scheme over $S$.
\end{remark}

An arithmetic trivialization $\beta$ of $A[p^\infty]$ induces for each integer $m\geq 1$ an arithmetic trivialization
$\bar\beta^{(m)} \colon\boldsymbol{\mu}_{p^m}\stackrel{\sim}{\rightarrow} e_pA[p^m]^0.$ Furthermore, $\beta_m$, an arithmetic trivialization of $A[p^m]$, is said to be \emph{compatible} with a given arithmetic trivialization $\beta$ of $A[p^\infty]$ if the composition 
\[\boldsymbol{\mu}_{p^m}\overset{\beta_{m} }\longrightarrow 
e_pA[p^m]^0\overset{(\bar\beta^{(m)})^{-1}}\longrightarrow\boldsymbol{\mu}_{p^m}\] is the identity.

\subsubsection{Igusa towers}\label{subsec.IgusaTowers}
Let us consider $\mathcal{X}_1=\mathcal{X}_{U_1(N^{+})}$ as a $\Z_{(p)}$-scheme and let $\X_1$ denote the special fiber of $\mathcal{X}_1$ (the same constructions can be performed over $\mathcal{X}_0$). 
Denote by $\mathbf{Ha}$ the Hasse invariant of $\X_1$ (\textit{cf.} \cite[\S6]{Kassaei}) and by $\widetilde{\mathbf{Ha}}$ a lift of $\mathbf{Ha}$ to $\mathcal{X}_1$ (\textit{cf. ibid.} \S7). Let $\mathcal{X}_1^\ord\defeq\mathcal{X}_1[1/\widetilde{\mathbf{Ha}}]$, an affine open $\Z_{(p)}$-subscheme of $\mathcal{X}_1$ which, as in Proposition \ref{prop.ModuliNaive}, represents the moduli problem that associates to any $\Z_{(p)}$-scheme $S$ the isomorphism classes of triplets $(A,\iota,\alpha)$ where $(A,\iota)$ is an ordinary QM abelian surface over $S$ and $\alpha$ a na\"{\i}ve level $U_1(N^+)$ structure. Let $\X_1^\mathrm{ord}$ denote the special fiber of $\mathcal{X}_1^\ord$. Let $\mathcal{A}^\ord$ be the universal ordinary abelian variety over $\mathcal{X}_1^\ord$. For any $\Z_{(p)}$-algebra $R$, denote $\mathcal{A}^\ord_R\defeq \mathcal{A}^\ord\otimes_{\Z_{(p)}}R$, and $\mathcal{A}^\ord_n\defeq \mathcal{A}^\ord_{\Z/p^n\Z}$.

Given two group schemes $G$ and $H$ equipped with a left $\mathcal{O}_{B,p}$-action, 
$\mathrm{Isom}_{\mathcal{O}_{B,p}}(G,H)$ denotes the set of isomorphisms of groups schemes $G\rightarrow H$ which are equivariant for the action of $\mathcal{O}_{B,p}$. Let $m\geq 1$ and $n\geq 1$ be integers. Consider 
\[\mathcal{P}_{m,n}(S)=\mathrm{Isom}_{\mathcal{O}_{B,p}}\left(\boldsymbol{\mu}_{p^m}\times\boldsymbol{\mu}_{p^m},\mathcal{A}_n^\ord[p^m]^0\right),\]
the set of arithmetic trivializations on $\mathcal{A}^\ord_{n}[p^m]$. 

\begin{proposition}\label{prop.ModuliIgusa}
The moduli problem $\mathcal{P}_{m,n}$ is represented by a $\Z/p^n\Z$-scheme $\mathrm{Ig}_{m,n}$, the \emph{$p^m$-layer of the Igusa tower over $\Z/p^n\Z$}, which is finite étale over $\mathrm{Ig}_{0,n}$.
\end{proposition}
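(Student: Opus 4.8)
\section*{Proof proposal}

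The plan is to exhibit $\mathrm{Ig}_{m,n}$ as a torsor over $\mathrm{Ig}_{0,n}\defeq\mathcal{X}_1^\ord\otimes_{\Z_{(p)}}\Z/p^n\Z$ under the automorphism group scheme of $\boldsymbol{\mu}_{p^m}$, and then to observe that this automorphism scheme is \emph{constant}, hence finite étale, even though $\boldsymbol{\mu}_{p^m}$ itself is infinitesimal over a $\Z/p^n\Z$-base. The case $m=0$ is trivial: $\boldsymbol{\mu}_{p^0}$ and $\mathcal{A}_n^\ord[p^0]^0$ are the trivial group scheme, so $\mathcal{P}_{0,n}$ is represented by $\mathrm{Ig}_{0,n}$ itself --- all the $\mathcal{P}_{m,n}$ being functors on $\mathrm{Ig}_{0,n}$-schemes, via the structure morphism to $\mathcal{X}_1^\ord$ needed to pull back $\mathcal{A}^\ord$.

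Fix $m\ge 1$. By the reduction recalled just below \eqref{eq.DecompositionApm}, the functor $\mathcal{P}_{m,n}$ is identified with $S\mapsto\underline{\Isom}_{S\text{-gp}}(\boldsymbol{\mu}_{p^m},e_p\mathcal{A}_{n,S}^\ord[p^m]^0)$, the stated $e_p\mathcal{O}_{B,p}$-equivariance imposing nothing further since $e_p\mathcal{O}_{B,p}$ acts through its corner $e_p\mathcal{O}_{B,p}e_p\cong\Z_p$ and any group-scheme homomorphism between finite $p$-group schemes is automatically $\Z_p$-linear. Now the crucial structural input: $\mathcal{A}^\ord$ is ordinary --- which is precisely why one passes to the ordinary locus $\mathcal{X}_1^\ord$; cf. Remark \ref{rem.OrdinaryReduction} --- so over $\mathrm{Ig}_{0,n}$ the $p$-divisible group $\mathcal{A}_n^\ord[p^\infty]$ has a connected-étale filtration whose connected part is of multiplicative type, of height equal to $\dim\mathcal{A}^\ord=2$. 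Via $i_p\colon\mathcal{O}_{B,p}\xrightarrow{\sim}\M_2(\Z_p)$, the idempotent $e_p$ splits this connected part into two isomorphic summands, so $e_p\mathcal{A}_n^\ord[p^m]^0$ is a finite locally free group scheme of multiplicative type of order $p^m$ over $\mathrm{Ig}_{0,n}$; in particular it is fppf-locally isomorphic to $\boldsymbol{\mu}_{p^m}$. Consequently $\mathrm{Ig}_{m,n}=\underline{\Isom}(\boldsymbol{\mu}_{p^m},e_p\mathcal{A}_n^\ord[p^m]^0)$ is a torsor, for the fppf topology, under $\underline{\Aut}(\boldsymbol{\mu}_{p^m})$.

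It remains to analyse $\underline{\Aut}(\boldsymbol{\mu}_{p^m})$ and conclude. Since $\boldsymbol{\mu}_{p^m}$ is the diagonalizable group scheme attached to $\Z/p^m\Z$, the anti-equivalence between diagonalizable group schemes and finitely generated abelian groups yields $\underline{\Aut}(\boldsymbol{\mu}_{p^m})\cong\underline{\Aut(\Z/p^m\Z)}=\underline{(\Z/p^m\Z)^\times}$, the constant group scheme --- in particular finite étale over $\mathrm{Ig}_{0,n}$. An fppf torsor under a finite étale (hence smooth affine) group scheme is representable by an affine scheme, by fppf descent, and becomes trivial over an étale surjective cover; therefore it is finite étale over the base. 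This shows that $\mathcal{P}_{m,n}$ is represented by a scheme $\mathrm{Ig}_{m,n}$ finite étale over $\mathrm{Ig}_{0,n}$, as claimed. (One can also proceed by Cartier duality, which turns $\underline{\Isom}(\boldsymbol{\mu}_{p^m},e_p\mathcal{A}_n^\ord[p^m]^0)$ into the $\Isom$-sheaf of two finite étale group schemes, where representability by a quasi-finite separated scheme is immediate and finiteness follows from the properness of the ambient $\Hom$-scheme.)

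The only genuinely delicate point is that the Igusa cover comes out étale over a $\Z/p^n\Z$-base despite $\boldsymbol{\mu}_{p^m}$ being radicial there; the resolution rests on the two facts quoted above --- ordinariness forces $e_p\mathcal{A}_n^\ord[p^m]^0$ to be a form of $\boldsymbol{\mu}_{p^m}$, and $\underline{\Aut}(\boldsymbol{\mu}_{p^m})$ is the constant group scheme $(\Z/p^m\Z)^\times$. The remaining ingredients --- the elementary $e_p$-identification, representability and properness of $\Hom$/$\Isom$ schemes of finite locally free group schemes, the structure theory of group schemes of multiplicative type (SGA~3), and fppf descent for torsors under affine groups --- are standard, along the lines of the classical construction of Igusa towers in the elliptic-modular case, and I would simply cite them.
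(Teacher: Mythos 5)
Your argument is correct, and in fact it supplies a proof that the paper itself omits: Proposition \ref{prop.ModuliIgusa} is stated without proof, the subsection ending with a pointer to \cite[Chapter 8]{Hida.book}, \cite[\S2.1]{Hida.Control} and \cite[\S2.5]{Burungale} for the theory of Igusa towers. Your route is precisely the standard one used in those references: reduce via the $e_p$-Morita decomposition (which the paper has already asserted in the discussion of arithmetic trivializations, and whose residual $\Z_p$-equivariance is indeed vacuous) to the sheaf $\underline{\Isom}(\boldsymbol{\mu}_{p^m},e_p\mathcal{A}^\ord_n[p^m]^0)$; use ordinariness over the $\Z/p^n\Z$-base to see that $e_p\mathcal{A}^\ord_n[p^m]^0$ is finite locally free of multiplicative type of order $p^m$, hence an \'etale-locally trivial form of $\boldsymbol{\mu}_{p^m}$; and conclude that the Isom-sheaf is a torsor under the constant group scheme $(\Z/p^m\Z)^\times$, hence representable and finite \'etale (your parenthetical Cartier-duality variant, turning the problem into an Isom of finite \'etale group schemes, is the cleanest formulation and is essentially how Hida phrases it). The only point worth making explicit if this were written out is that over a base where $p$ is nilpotent the ``connected part'' of the ordinary $p$-divisible group is the canonical multiplicative subobject cut out over $\mathcal{X}_1^{\ord}$ (this is where inverting $\widetilde{\mathbf{Ha}}$ is used), but you have correctly identified this as the crucial input, so there is no gap.
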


By the universality of $\mathcal{A}^\ord_n$, 
the $\Z/p^n\Z$-scheme $\mathrm{Ig}_{m,n}$ represents the moduli problem which associates 
to any $\Z/p^n\Z$-scheme $S$ the set of isomorphism classes of quadruplets $(A,\iota,\alpha,\beta)$ consisting of a point $(A,\iota,\alpha)\in\X^\mathrm{ord}_1$ and an arithmetic trivialization $\beta$ of $A[p^m]$. 

For integers $m\geq 0$ and $n\geq 1$ and a $\Z/p^n\Z$-scheme $S$, the canonical monomorphism $\boldsymbol{\mu}_{p^m}\hookrightarrow \boldsymbol{\mu}_{p^{m+1}}$ of $S$-group schemes induces a canonical map $\mathrm{Ig}_{m+1,n}\rightarrow\mathrm{Ig}_{m,n}$. We can therefore consider the $\Z/p^n\Z$-scheme \[\widehat{\mathrm{Ig}}_n=\varprojlim_m\mathrm{Ig}_{m,n},\] called the \textit{Igusa tower over $\Z/p^n\Z$}.

\begin{proposition}
The $\Z/p^n\Z$-scheme $\widehat{\mathrm{Ig}}_n$ represents the moduli problem over $\Z/p^n\Z$ \[\mathcal{P}_n(S)=\mathrm{Isom}_{\mathcal{O}_{B,p}}\left(\boldsymbol{\mu}_{p^\infty}\times\boldsymbol{\mu}_{p^\infty},\mathcal{A}_n^\ord[p^\infty]^0\right)\]
classifying the set of arithmetic trivializations of $\mathcal{A}^\ord_n[p^\infty]$, or, equivalently, the moduli problem which associates to a $\Z/p^n\Z$-scheme $S$ the set of isomorphism classes of quadruplets $(A,\iota,\alpha,\beta)$ for each integer $m\geq 1$ consisting of a point $(A,\iota,\alpha)\in\X^\mathrm{ord}_1$ over $S$ equipped with a level $U_1(N^+)$ structure $\alpha$ and a family of arithmetic trivializations $\beta_{m}$ of $\mathcal{A}_n^\ord[p^m]$, one for each integer $m\geq 1$, such that there is a trivialization $\beta$ of $\mathcal{A}_n^\ord[p^\infty]$ for which $\beta_{m} $ is compatible with $\beta$, for all $m\geq1$.
\end{proposition}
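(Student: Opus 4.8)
The plan is to exhibit $\widehat{\mathrm{Ig}}_n$ as the limit of the tower $\{\mathrm{Ig}_{m,n}\}_m$, compute its functor of points as the levelwise limit, and then glue levelwise arithmetic trivializations into an arithmetic trivialization of $A[p^\infty]$. First I would observe that each transition morphism $\mathrm{Ig}_{m+1,n}\to\mathrm{Ig}_{m,n}$ is affine --- indeed finite, being an $\mathrm{Ig}_{0,n}$-morphism between schemes finite over $\mathrm{Ig}_{0,n}$ by Proposition \ref{prop.ModuliIgusa}. The limit of a countable tower of schemes along affine morphisms exists in the category of schemes, so the scheme $\widehat{\mathrm{Ig}}_n=\varinjlim_m\mathrm{Ig}_{m,n}$ of \S\ref{subsec.IgusaTowers} exists as a $\Z/p^n\Z$-scheme and, being such a limit, represents the functor $S\mapsto\varprojlim_m\Hom_{\Z/p^n\Z}(S,\mathrm{Ig}_{m,n})$. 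Feeding in the moduli description of $\mathrm{Ig}_{m,n}$ recalled just before the statement, this already identifies $\widehat{\mathrm{Ig}}_n(S)$ with the set of data $\bigl((A,\iota,\alpha);(\beta_m)_{m\ge0}\bigr)$ where $(A,\iota,\alpha)\in\X_1^{\mathrm{ord}}(S)$ and $(\beta_m)_m$ is a compatible system of arithmetic trivializations $\beta_m\in\Isom_{\mathcal{O}_{B,p}}(\boldsymbol{\mu}_{p^m}\times\boldsymbol{\mu}_{p^m},A[p^m]^0)$, the transition map $\mathcal{P}_{m+1,n}(S)\to\mathcal{P}_{m,n}(S)$ being restriction along $\boldsymbol{\mu}_{p^m}\times\boldsymbol{\mu}_{p^m}\hookrightarrow\boldsymbol{\mu}_{p^{m+1}}\times\boldsymbol{\mu}_{p^{m+1}}$ --- which lands in $A[p^m]^0$ because the source is killed by $p^m$, and stays an isomorphism because $\beta_{m+1}$ is.

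The substantive step is then the bijection, natural in $S$,
\[
\varprojlim_m\Isom_{\mathcal{O}_{B,p}}\bigl(\boldsymbol{\mu}_{p^m}\times\boldsymbol{\mu}_{p^m},\,A[p^m]^0\bigr)\;\stackrel{\sim}{\longrightarrow}\;\Isom_{\mathcal{O}_{B,p}}\bigl(\boldsymbol{\mu}_{p^\infty}\times\boldsymbol{\mu}_{p^\infty},\,A[p^\infty]^0\bigr)=\mathcal{P}_n(S).
\]
For this I would use that $\boldsymbol{\mu}_{p^\infty}=\varinjlim_m\boldsymbol{\mu}_{p^m}$ and, since $A$ is ordinary over $S$ by Remark \ref{rem.OrdinaryReduction}, $A[p^\infty]^0=\varinjlim_m A[p^m]^0$ as fppf sheaves of $\mathcal{O}_{B,p}$-modules; hence a morphism $\boldsymbol{\mu}_{p^\infty}\times\boldsymbol{\mu}_{p^\infty}\to A[p^\infty]^0$ is precisely a compatible system of morphisms $\boldsymbol{\mu}_{p^m}\times\boldsymbol{\mu}_{p^m}\to A[p^\infty]^0$, each of which factors uniquely through $A[p^m]^0$ (its source being $p^m$-torsion), and such a morphism of ind-group-schemes is $\mathcal{O}_{B,p}$-equivariant, resp. an isomorphism, if and only if all of its $p^m$-truncations are. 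Chasing this through the identification of the previous paragraph yields $\Hom_{\Z/p^n\Z}(S,\widehat{\mathrm{Ig}}_n)\cong\mathcal{P}_n(S)$ functorially in $S$, which is the first assertion.

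For the equivalent quadruplet description, it remains to unwind the data: given $(A,\iota,\alpha)\in\X_1^{\mathrm{ord}}(S)$ and $\beta\in\mathcal{P}_n(S)$, the $p^m$-truncations $\beta_m$ of $\beta$ form a family of arithmetic trivializations of the $A[p^m]$ which is compatible with $\beta$ in the stated sense (the composite $\boldsymbol{\mu}_{p^m}\to eA[p^m]^0\to\boldsymbol{\mu}_{p^m}$ is the identity); conversely, a family $(\beta_m)_{m\ge1}$ admitting a \emph{common} compatible $\beta$ recovers that $\beta$ uniquely, as compatibility with a fixed $\beta$ forces $\beta_m$ to be the $p^m$-truncation of $\beta$. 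Hence the quadruplet functor coincides with $\mathcal{P}_n$, and the proof is complete.

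I expect the only real subtlety to be the middle paragraph --- that a compatible system of finite-level arithmetic trivializations glues to a single arithmetic trivialization of $A[p^\infty]$ --- which rests on the identification $A[p^\infty]^0=\varinjlim_m A[p^m]^0$ for ordinary $A$ together with the levelwise detection of the isomorphism and equivariance conditions. The existence of the limit scheme, the commutation of $\Hom(S,-)$ with that limit, and the final unwinding are all formal.
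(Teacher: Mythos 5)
Your proof is correct, and in fact the paper offers no argument of its own for this proposition — it is stated as an unwinding of the definition $\widehat{\mathrm{Ig}}_n=\varinjlim_m\mathrm{Ig}_{m,n}$ with a pointer to the references on Igusa towers — so your limit argument (affine/finite transition maps, $\Hom(S,-)$ commuting with the limit, and the levelwise gluing $\varprojlim_m\Isom(\boldsymbol{\mu}_{p^m}^2,A[p^m]^0)\cong\Isom(\boldsymbol{\mu}_{p^\infty}^2,A[p^\infty]^0)$ using $p^m$-torsion factorization and levelwise detection of equivariance and invertibility) is exactly the standard verification being invoked. Two small remarks: you correctly read the paper's $\varinjlim_m$ as the limit along the tower $\mathrm{Ig}_{m+1,n}\to\mathrm{Ig}_{m,n}$, which is the only sensible interpretation; and the passage from compatible systems of \emph{isomorphism classes} of quadruplets to an actual compatible system of trivializations on a single $(A,\iota,\alpha)$ silently uses the rigidity of the moduli problem (no nontrivial automorphisms, guaranteed by $N^+\ge 4$, cf. Remark \ref{rem.Representability}), which is worth a word but is not a gap.
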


Define finally the \textit{Igusa tower over $\Z_p$} to be the $\Z_p$-formal scheme 
\[\widehat{\text{Ig}}=\varprojlim_n\widehat{\mathrm{Ig}}_n=\varprojlim_n\varprojlim_m\mathrm{Ig}_{m,n}\]
where the inverse limit is computed with respect to the canonical maps
induced by the canonical projection maps 
$\Z/p^{n+1}\Z\twoheadrightarrow\Z/p^{n}\Z$ for each $n\geq 1$. Refer to \cite[Chapter 8]{Hida.book}, \cite[\S2.1]{Hida.Control} and \cite[\S2.5]{Burungale} for details on Igusa towers.

\subsection{Relative Chow motives}
We recall some general results on representations of algebraic groups obtained in \cite{Ancona}.
For this subsection, let ${G}$ be an algebraic group, $({G},D)$ be a PEL Shimura datum, $U$ a compact open subset of ${G}(\A_{\Q,\mathrm{fin}})$, $X$ the canonical model of the Shimura variety $\mathrm{Sh}_U({G},D)$ of level $U$ over the reflex field $F$ and $\pi\colon\mathcal{A}\to X$ be the universal PEL abelian variety. If $H^\bullet$ is a Weil cohomology, there is a realization functor \[R\colon\CHM_F(X)\to\operatorname{Vec}_F^\pm\] from the category $\operatorname{CHM}_F(S)$ of relative Chow motives $(X,p,n)$ (where $X\to S$ is a smooth projective scheme, $p\in\operatorname{CH}^{\dim(X)}(X\times_S X)_F$ satisfies $p^2=p$ and $n\in\Z$), to the category of finite dimensional $\Z$-graded $F$-vector spaces compatible with the ``varieties-to-motives functor''
\begin{center}
\begin{tikzcd}[column sep = large]
\CHM_F(S)\arrow[r,"R"] & \operatorname{Vec}_F^\pm \\
\operatorname{Var}_S \arrow[u,"M"]\arrow[ur,swap, "H^\bullet"] &
\end{tikzcd}
\end{center} 
and being such that $R(h^i(\mathcal{A}))=H^i(\mathcal{A})$ for all $0\le i\le 2g$ (\textit{ibid.}, Proposition 3.5). Furthermore, every decomposition into direct summands of $H^1(\mathcal{A})^{\otimes n}$ lifts canonically to a decomposition into direct summands of $h^1(\mathcal{A})^{\otimes n}$ (\textit{ibid.} Théorème 6.1). In particular, we have
\begin{itemize}
\item A \textit{Hodge realization} $\operatorname{Hod}_{X/F}\colon \CHM_F(X)\to\VHS_F(X(\C))$ into variations of Hodge structures, when \[H^\bullet\colon (f\colon V\to X)\mapsto \bigoplus_i (R^if_*F_V)_x,\] for $F\hookrightarrow\C$ a number field and $x\in X(F)$ a base point;

\item An \textit{étale realization} $\Et_{X/F}\colon\CHM_F(X)\to\CHM_{\et,F}(X)$ into lisse étale sheaves, when \[H^\bullet\colon (f\colon V\to X)\mapsto \bigoplus_i \left(R^if_*F_V\right)_{\bar{x}},\] for $\ell$ a prime, $F$ an $\ell$-adic field and a base geometric point $\bar{x}\in X(\overline{\Q}_\ell)$.
\end{itemize}

By \textit{ibid.} Théorème 8.6, the canonical construction functor (\cite[\S 1.18]{Pink}) lifts through $\operatorname{Hod}_{X/F}$ to a functor
\[\Anc_{X/F}\colon\operatorname{Rep}_F({G})\longrightarrow\operatorname{CHM}_F(X),\]where $\operatorname{Rep}_F({G})$ is the category of $F$-representations of ${G}$, with the following properties:
\begin{itemize}
\item $\Anc_{X/F}$ is $F$-linear, preserves duals and tensor products;
\item If $V_{G}$ is the standard algebraic representation of ${G}$, $\Anc_{X/F}(V_{G}(F))=h^1(\mathcal{A})$ (following the normalization in \cite[Remark 6.2.3]{LoefflerSkinnerZerbes}, see also \cite[\S 8]{Torzewski}).
\end{itemize}
See \cite{CortiHanamura}, \cite{DeningerMurre} and \cite[\S2]{Ancona} for properties of the decomposition of Chow motives.

\subsection{Symmetric tensors}\label{subsec.TSym}
Let $n\ge 1$ be an integer, $\mathfrak{S}_n$ be the group of permutations on $n$ letters and $H$ be a free abelian module of finite rank over a ring $R$ of characteristic 0. The group $\mathfrak{S}_n$ acts on the $n$-th tensor power $H^{\otimes n}$ by permutation of the factors of each elementary tensor. The set of $\mathfrak{S}_n$-invariant elements are the \textit{symmetric tensors}:\[\TSym^{n}(H)\defeq \{\eta\in H^{\otimes n}:\ \sigma(\eta)=\eta\}.\]
Similarly to the symmetric algebra $\Sym^\bullet(H)$, $\TSym^{\bullet}(H)\defeq\bigoplus_{n\ge 0}\TSym^{n}(H)$ is a graded algebra with the usual sum and with the symmetrization of the tensor product $\odot$. Identifying $H$ with $\TSym^1(H)$, $H$ can be embedded in $\TSym^{\bullet}(H)$, so the universal property of the symmetric algebra gives a graded algebra morphism $\Sym^{\bullet}(H)\to\TSym^{\bullet}(H)$, which specializes in degree $n$ to \[\Sym^{n}(H)\longrightarrow\TSym^{n}(H)\colon h_1^{\cdot e_i}\cdots h_d^{\cdot e_d} \longmapsto e_1!\cdots e_d! h_1^{\odot e_i}\odot\dots\odot h_d^{\odot e_d}.\]
If $n!$ is invertible in $R$, the map above can be inverted and so becomes an isomorphism. There is another natural link between $\Sym$ and $\TSym$: if $\smallbullet\,^\vee\defeq\Hom(\,\smallbullet\,,R)$ denotes the linear dual of an $R$-module, we have a canonical isomorphism
\begin{equation}\label{eq.IsomSymTSym}
\TSym^{n}(H^\vee)\cong\left(\Sym^{n}(H)\right)^\vee.
\end{equation}

If $\mathscr{F}$ is a locally free sheaf on a variety $X$ over a field of characteristic 0, $\Sym^{\bullet}(\mathscr{F})$ makes sense as a locally free sheaf on $X$, but that is not always the case with $\TSym$. However, since $H$ is a free module, $\TSym^{\bullet}(H)$ coincides with $\Gamma^{\bullet}(H)$, the divided power algebra of $H$ (see \cite{Lundkvist} for more details on the relation between these two objects) which does sheafify well: as pointed out in \cite[\S2.2]{KingsLoefflerZerbes.ERL}, if $\mathscr{F}$ is a locally free sheaf on a variety $X$ over a field of characteristic 0, $\Gamma^n(\mathscr{F})$ and therefore $\TSym^n(\mathscr{F})$ define a locally free sheaf on $X$ and, in particular, one can talk about symmetric tensors of coefficient sheaves of étale cohomology. See also \cite[\S 12.2.2]{Kings.TSym}.

Let $M=(V,p,m)$ be a relative Chow motive as in the previous subsection. The group $\mathfrak{S}_n$ acts on $M^{\otimes n}=(V^n,p,m)$, where $V^n$ denotes the $n$-th fiber product power of $V$ over $X$. The image of $M^{\otimes n}$ under the projector $\pi_{\mathfrak{S}_n}=\sum_{\sigma\in\mathfrak{S}_n}\sigma$ is the $n$-th symmetric power of $M$, denoted $\Sym^n(M)=(V^n,\pi_{\mathfrak{S}_n}\circ p,m)$. We then define $\TSym^n(M)\defeq\left(\Sym^n(M^\vee)\right)^\vee$.

\subsection{Hecke Characters}\label{subsec.HeckeCharacters}
An \textit{algebraic Hecke character} over a finite Galois extension $F/\Q$ is a 1-dimensional $F$-representation $\chi\in\Hom_{\cont}(\A_{F}^\times,\overline{\Q}^\times)$, which is said to have \textit{infinity type $(i_1,i_2)$} if, for each $x=x_{\mathrm{fin}}x_\infty\in\A^\times_F$ decomposed in its finite $x_{\mathrm{fin}}\in\A^\times_{F,\mathrm{fin}}$ and infinity $x_\infty\in\A^\times_{F,\infty}$ components, \[\chi(x)=\chi(x_{\mathrm{fin}})x_\infty^{i_1}\bar{x}_\infty^{i_2}.\]
We decompose $\chi$ into a finite and a infinite part given respectively by: \[\chi_{\mathrm{fin}}(x)\defeq\chi(x_\mathrm{fin})\hspace{10pt}\text{and}\hspace{10pt}\chi_\infty(x)\defeq x_\infty^{i_1}\bar{x}_\infty^{i_2}.\] Through the Artin reciprocity map, $\chi$ can be seen as a \textit{Galois character} over $F$, that is, a character of $\Gal(\overline{F}/F)$. All characters we consider are algebraic, so we drop the adjective for the sake of brevity.

Now let $F=K$ and $\widetilde{c}\in\Z_{\ge0}$ coprime with $N$ (but not necessarily with $p$). We say that $\chi$ has \textit{finite type $(\widetilde{c},\mathfrak{N}^+,\psi)$} if the conductor $\mathfrak{c}$ of $\chi$ is divisible by $\widetilde{c}$ and the restriction of $\chi$ to $(\mathcal{O}_{\widetilde{c}}/\mathfrak{N}^+\cap\mathcal{O}_{\widetilde{c}})^\times$ (by restricting to $\widehat{\mathcal{O}}_{\widetilde{c}}^\times$ and projecting to the $\mathfrak{N}^+$-component) is $\psi^{-1}$, a finite order character.

\section{Generalized Heegner classes}\label{sec.GeneralizedHeegnerClasses}
For the whole chapter fix integers $k,r,n,m,c\in\Z_{\ge0}$ such that $n\ge m$, $k\defeq 2r+2\ge 2$ and $c\geq 1$ is coprime with $ND_Kp$. Recall the Shimura curve $\widetilde{X}_m$ of level $\widetilde{U}_m$ introduced in \S\ref{subsec.ShimuraCurves} and let $\widetilde{\mathcal{A}}_m$ be the the correspondent universal abelian variety in view of \S\ref{subsec.Moduli}. Via the main theorem of complex multiplication, the elliptic curve $E\defeq \C/\mathcal{O}_K$ has a model defined over $H$. We shall denote by $(A,\iota,\alpha)$ a fixed QM abelian variety, consisting of $A\defeq A_{\vartheta}= E\oplus E$ (\textit{cf.} \S\ref{subsec.CMpoints}), the embedding $\iota\colon\mathcal{O}_B\hookrightarrow\End(A)=\mathcal{O}_K\oplus\mathcal{O}_K$ induced by $\iota_K^B$, and equipped with a level $\widetilde{U}_m$ structure $\alpha$. Recall that the idempotents $e$ and $\bar{e}$ from \S\ref{subsec.Idempotents} act under $j$ as projections into each factor. Thus we have a decomposition $A=eA\oplus\bar{e}A$ into isomorphic factors $eA\cong\bar{e}A\cong E$.

\subsection{Generalized Kuga--Sato variety}
We recall basic definitions following \cite[\S 2.6]{Brooks}.

\begin{definition}
The \textit{Kuga--Sato variety} of weight $k$ over $\widetilde{X}_m$ is $\widetilde{\mathcal{A}}_m^r$, the $r$-fold fiber product of $\widetilde{\mathcal{A}}_m$ over $\widetilde{X}_m$. The \emph{generalized Kuga--Sato variety} of weight $k$ over $\widetilde{X}_m$ is \[W_{k,m}\defeq\widetilde{\mathcal{A}}_m^{r}\times_{\widetilde{X}_m} A^{r}.\]
\end{definition}

Since $A$ is defined over $H$ and both $\widetilde{\mathcal{A}}_m$ and $\widetilde{X}_m$ have models over $\Q$, $W_{k,m}$ has a model over $H$, which we fix henceforth. The generalized Kuga--Sato variety $W_{k,m}$ has relative dimension $4r+1=2k-1$ over $\widetilde{X}_m$, the first factor contributing with $2r+1$ and the second with $2r$.

Denote by $\pi_{\mathcal{A}}\colon\widetilde{\mathcal{A}}_m\to \widetilde{X}_m$ the canonical projection. Since the complex structure commutes with the action of $B$, the idempotents $e$ and $\bar{e}$ from \ref{subsec.Idempotents} induce a decomposition of variations of Hodge structures \[R^1\pi_{\mathcal{A},*}\Q=eR^1\pi_{\mathcal{A},*}\Q\oplus\bar{e}R^1\pi_{\mathcal{A},*}\Q\] into isomorphic factors. The projector $\epsilon_{\mathcal{A}}\in\Corr_{\widetilde{X}_m}^0(\widetilde{\mathcal{A}}_m^{r},\widetilde{\mathcal{A}}_m^{r})$ from \cite[Theorem 5.8.iii]{Besser} induces a projection from $R^{2r}\pi_{\mathcal{A},*}\Q$ to $\Sym^{2r}(eR^1\pi_{\mathcal{A},*}\Q)$: after finding $\Sym^{2}(eR^1\pi_{\mathcal{A},*}\Q)$ inside $R^2\pi_{\mathcal{A},*}\Q$ (\textit{ibid.}, Theorem 5.8.ii), we have
\begin{align*}
\Sym^{2r}(eR^1\pi_{\mathcal{A},*}\Q)&\longmono\Sym^{r}(\Sym^2(eR^1\pi_{\mathcal{A},*}\Q))&\text{(\textit{ibid.}, Proposition 5.6.vii)}\\
&\longmono (R^2\pi_{\mathcal{A},*}\Q)^{\otimes r}\longmono R^{2r}\pi_{\mathcal{A},*}\Q&\text{(\textit{ibid.}, Theorem 5.8.ii)}\\
\end{align*}
the last step following from the Künneth formula. Essentially, $\epsilon_\mathcal{A}$ kills all factors in the Künneth formula apart from $(R^2\pi_{\mathcal{A},*}\Q)^{\otimes r}$, which gets projected into $\Sym^{2r}(eR^1\pi_{\mathcal{A},*}\Q)$. We let $\epsilon_A\in\Corr^{2r}(A^r,A^r)$ to be the projector obtained by specializing each factor or $\widetilde{\mathcal{A}}_m^r$ to $A$. We also define $\epsilon_W\defeq\epsilon_{\mathcal{A}}\epsilon_A\in\Corr^r_{\widetilde{X}_m}(W_{k,m},W_{k,m})$. See also \cite[(1.4.4), (2.1.2)]{BertoliniDarmonPrasanna} for the elliptic counterpart.

\begin{remark}\label{rem.TSym}
We identify the sheaf $\Sym^{2r}\left((eR^1\pi_{\mathcal{A},*}\Q)^\vee\right)$ with $\TSym^{2r}\left((eR^1\pi_{\mathcal{A},*}\Q)^\vee\right)$ via the isomorphism \eqref{eq.IsomSymTSym}. This makes the \textit{dual} of the image of $\epsilon_W$ to lie in $\Sym^{2r}(eR^1\pi_{\mathcal{A},*}\Q)$, from which representations of modular forms will arise (see \S\ref{subsec.ClassesModularForms}).
\end{remark}

\subsection{Generalized Heegner cycles}
The extension of Bertolini--Darmon--Prasanna's definition \cite[\S 2.3]{BertoliniDarmonPrasanna} to the indefinite quaternionic case is due to Brooks \cite[\S 6.2]{Brooks}.

Let $F$ be a finite extension of $F_{cp^n}$, $(A',\iota')$ be a QM abelian surface and $\phi\colon A\to A'$ be an isogeny defined over $F$ whose kernel intersects trivially the image of $\alpha$. Under these conditions, $\alpha'\defeq\phi\circ\alpha$ gives a level $\widetilde{U}_m$ structure on $A'$, and therefore a point $(A',\iota',\alpha')$ in $\widetilde{\mathcal{A}}_m$ defined over $F$ (\textit{cf.} \cite[Theorem 3.2]{Shimura.CCFZFAC}), which allows the graph of $\phi$ to be embedded into $A\times_{\widetilde{X}_m}\widetilde{\mathcal{A}}_m$ via the ``universality'' inclusion $A'\hookrightarrow\widetilde{\mathcal{A}}_m$, giving an embedding $\operatorname{graph}(\phi)^r\hookrightarrow W_{k,m}$. The graph of $\phi$ has dimension $2$ so, as a rational cocycle in $W_{k,m}$ defined over $F$, $\operatorname{graph}(\phi)^r$ has codimension $2r+1=k-1$.

\begin{definition}
The \emph{generalized Heegner cycle} associated to the isogeny $\phi$ is \[\Delta_{\phi,m}^{[k]}\defeq \epsilon_W\operatorname{graph}(\phi)^r\in\epsilon_W\CH^{k-1}(W_{k,m}\otimes_{H}F)_{\Q}.\]
\end{definition}

Chow groups can be identified with motivic cohomology groups (\textit{cf.} \cite[Corollary 19.2]{MazzaVoevodskyWeibel}), so there is an isomorphism
\begin{equation}
r_\mot\colon\epsilon_W\CH^{k-1}(W_{k,m}\otimes_{H}F)_{\Q}\stackrel{\sim}{\longrightarrow}\epsilon_W H^{2k-2}_{\mot}(W_{k,m}\otimes_{H}F,\Q(k-1)).
\end{equation}
Fix for the rest of the section $L$ a $p$-adic field with an embedding $\sigma_L\colon K\hookrightarrow L$ (if $p$ splits in $K$, one can simply take $L=\Q_p$). Composing the map $r_{\mathrm{mot}}$ with the realization map into the étale cohomology with coefficients in $L$ (that is, the cycle map from étale cohomology),
\begin{equation}
r_{{\et},L}\colon\epsilon_{W} H_\mathrm{mot}^{2k-2}(W_{k,m}\otimes_HF,\Q(k-1))\longrightarrow\epsilon_{W} H_{\et}^{2k-2}(W_{k,m}\otimes_HF,L(k-1))
\end{equation}
gives an étale class $\Delta_{\phi,m,\et}^{[k]}\defeq r_{\et,L}\circ r_{\mot}(\Delta_{\phi,m}^{[k]})$ associated to $\Delta_{\phi,m}^{[k]}$.

\begin{remark}
If $\phi$ is the isogeny $\phi_{cp^n}\colon A_{\vartheta}\to A_{\vartheta_{cp^n}}$ from \S\ref{subsec.CMpoints}, we write $cp^n$ instead of $\phi_{cp^n}$ when it appears as an index in all notation to follow.
\end{remark}

\subsection{Lieberman trick}\label{subsec.Lieberman}
By a technique known as Lieberman trick, one can replace the base scheme $W_{k,m}$ with the simpler variety $\widetilde{X}_m$, to the cost of having a slightly more complicated coefficient system. By the Künneth decomposition theorem in étale cohomology (see \cite[Theorem 8.21]{Milne.EC}), we have \[H_{\et}^{2k-2}(W_{k,m},L)=\bigoplus_{i+j=2k-2}H_{\et}^i(\widetilde{\mathcal{A}}_m^r,L)\otimes H_{\et}^j(A^r,L).\]
Let $\pi_{\mathcal{A}}\colon\widetilde{\mathcal{A}}_m\to \widetilde{X}_m$ and $\pi_A\colon A\hookrightarrow\widetilde{\mathcal{A}}_m\stackrel{\pi_\mathcal{A}}{\longrightarrow} \widetilde{X}_m$ be the canonical projections. Since the Leray spectral sequence degenerates at page 2 (\emph{cf.} \cite[\S 2.4]{Deligne.Leray}), the groups in the right-hand side decompose as \[H_{\et}^i(\widetilde{\mathcal{A}}_m^r,L)=\bigoplus_{a+b=i} H^a_{\et}(\widetilde{X}_m,R^b\pi_{\mathcal{A},*}L)\text{ and }H_{\et}^j(A^r,L)=\bigoplus_{a+b=j}H^{a}_{\et}(\widetilde{X}_m,R^{b}\pi_{A,*}L)\] and the image of the projectors $\epsilon_{\mathcal{A}}$ and $\epsilon_A$ are motives whose Betti realizations are of type\linebreak $((k-1,0),(0,k-1)),$ as in \cite{Besser} (see the proof of Theorem 5.8 and the paragraph after the proof of Proposition 5.9 in \emph{op. cit.}). Therefore, the only summand remaining after applying the projectors corresponds to the indexes $i=j=k-1$, so
\begin{center}
\begin{tikzcd}[column sep = tiny]
\epsilon_W H_{\et}^{2k-2}(W_{k,m},L)\arrow[r,equal]&H_{\et}^1\left(\widetilde{X}_m,\TSym^{2r}(eR^1\pi_{\mathcal{A},*}L)\right)\otimes H_{\et}^1\left(\widetilde{X}_m,\TSym^{2r}(eR^1\pi_{A,*}L)\right)\arrow[d,"\mathrm{PD}"]\\ & H_{\et}^2\left(\widetilde{X}_m,\TSym^{2r}(eR^1\pi_{\mathcal{A},*}L)\otimes \TSym^{2r}(eR^1\pi_{A,*}L)\right),
\end{tikzcd}
\end{center}
where $\mathrm{PD}$ is the Poincaré duality map (see for example \cite[Corollary 11.2]{Milne.EC}).

\subsection{Generalized Heegner classes}\label{subsec.GeneralizedHeegnerClasses}

Twisting the cohomology groups above by $k-1=2r+1$ gives a map
\begin{equation}\label{eq.LTtwisted}
\epsilon_W H_{\et}^{2k-2}(W_{k,m},L(k-1))\longrightarrow H_{\et}^2\left(\widetilde{X}_m,\TSym^{2r}(eR^1\pi_{\mathcal{A},*}L)\otimes \TSym^{2r}(eR^1\pi_{A,*}L)(2r+1)\right).
\end{equation}
It will be convenient to have the twists distributed in the following way: denoting \[\mathscr{M}^{2r}_{\et}\defeq \TSym^{2r}(eR^1\pi_{\mathcal{A},*}L(1))\otimes \TSym^{2r}(eR^1\pi_{A,*}L),\] we have
\begin{equation}\label{eq.distributingTwists}
\TSym^{2r}(eR^1\pi_{\mathcal{A},*}L)\otimes \TSym^{2r}(eR^1\pi_{A,*}L)(2r+1)\cong\mathscr{M}^{2r}_{\et}(1).
\end{equation}
The composition of $r_\mot$, $r_{\et,L}$, \eqref{eq.LTtwisted} and the isomorphism induced by \eqref{eq.distributingTwists} gives a map

\begin{equation}\label{eq.LTfullmap}
\epsilon_{W} H_\mathrm{mot}^{2k-2}(W_{k,m}\otimes_HF,\Q(k-1))\longrightarrow H_{\et}^2\left(\widetilde{X}_m\otimes_\Q F, \mathscr{M}^{2r}_{\et}(1)\right).
\end{equation}

\begin{definition} The image of $\Delta_{\phi,m}^{[k]}$ by \eqref{eq.LTfullmap} is the 
\textit{generalized Heegner class} $z_{\phi,m}^{[k]}$.
\end{definition}

\subsection{Representations associated to motives}\label{subsec.RepresentationsAssociatedToMotives}
The sheaf $\mathscr{V}^{2r}_{\et}\defeq\TSym^{2r}(eR^1\pi_{\mathcal{A},*}L(1))$ is the étale realization of
\begin{equation}\label{eq.MotiveV}
\mathscr{V}^{2r}\defeq\TSym^{2r}(eh^1(\widetilde{\mathcal{A}}_m)(1))\in\CHM_{K}(\widetilde{X}_m),
\end{equation}
so $\mathscr{V}^{2r}$ comes from a $K$-representation of $\boldG$ defined in \S\ref{subsec.ShimuraCurves}. Since $\mathbf{G}(K)=(B\otimes_\Q K)^\times$, the the standard representation of $\mathbf{G}$ over $K$ is $B\otimes_\Q K\cong\mathrm{M}_2(K)$ (via $I_B$ when $p$ is split and $i_\mathcal{M}\circ I_B$ from \S\ref{subsec.Idempotents}), which is 4-dimensional over $K$.
By \cite[Corollaire 2.6]{Ancona}, $h^1(\widetilde{\mathcal{A}}_m)^\vee\cong h^1(\widetilde{\mathcal{A}}_m)(1)$ and, since $e(h^1(\widetilde{\mathcal{A}}_m))^\vee\cong (e^\dagger h^1(\widetilde{\mathcal{A}}_m))^\vee$, we have \[eh^1(\widetilde{\mathcal{A}}_m)(1)\cong (e^\dagger h^1(\widetilde{\mathcal{A}}_m))^\vee.\] 
The idempotents $e^\dagger$ and $\bar e^\dagger$ split $\mathrm{M}_2(K)$ into two 2-dimensional components isomorphic to $K^2$, thus \[\mathscr{V}^{2r}\cong\TSym^{2r}\left((e^\dagger h^1(\widetilde{\mathcal{A}}_m))^\vee\right)\cong\Anc_{\widetilde{X}_m/K}\left(\TSym^{2r}((K^2)^\vee)\right)\cong\Anc_{\widetilde{X}_m/K}\left((\Sym^{2r}(K^2))^\vee\right).\]

Similarly, $\TSym^{2r}(eR^1\pi_{A,*}L)$, is the étale realization of $\TSym^{2r}(eh^1(A))$, but since $A$ is not a universal object over any Shimura variety, it is not imediately obvious what representation would correspond to this motive. However, since $eA=E$, we have $eh^1(A)=h^1(E)$. The zero-dimensional PEL Shimura variety associated to the torus $\boldH\defeq\Res_{K/\Q}(\G_m)$ with level $U_{\mathfrak{N}^+,cp^n}$ has a canonical model $S_{cp^n}$ defined over $K$ (see the paragraph ``CM-tori'' at \cite[p. 347]{Milne.SV}), and its points correspond to all $K$-isomorphism classes of elliptic curves defined over $F_{cp^n}$ with CM by $\mathcal{O}_K$ (in other words, the $\Gal(F_{cp^n}/K)$-orbit of $E$). Denote by $\mathcal{E}_{cp^n}$ the universal elliptic curve over $S_{cp^n}$. Then $E\hookrightarrow\mathcal{E}_{cp^n}$ is a scheme over $S_{cp^n}$, so $h^1(E)\in\CHM_K(S_{cp^n})$. Furthermore, since there are only finitely many moduli classes, $h^1(E)$ is a direct summand of $h^1(\mathcal{E}_{cp^n})$. Then there is some direct summand $W$, 1-dimensional over $K$, of the standard $K$-representation of $\boldH$ (which is $K$, as $\boldH(K)=\GL(K)$) such that $\Anc_{S_{cp^n}/K}(W)=h^1(E)=eh^1(A)$, and therefore \[\TSym^{2r}(eh^1(A))=\Anc_{S_{cp^n}/K}\left(\TSym^{2r}(W)\otimes_{\Q}K\right)\in\CHM_{K}(S_{cp^n})\](since $\TSym^{2r}(eh^1(A))$ is a Chow motive defined over $\Q$, seeing it as a motive over $K$ prompts a base change in the associated representation.)

\subsection{$j$-components}\label{subsec.jComponents}
The representation $W(\Q)$ is 2-dimensional over $\Q$, so $\TSym^{2r}(W)(\Q)$ is afforded by the (dual) space of degree $2r$ polynomials in 2 variables and with coefficients in $\Q$, thus being $(2r+1)$-dimensional over $\Q$. Therefore $\TSym^{2r}(W)(K)=\TSym^{2r}(W)(\Q)\otimes_{\Q}K$ is a $(2r+1)$-dimensional $K$-representation that splits into a direct sum of 1-dimensional representations (characters) $W^{[j]}$ for $0\le j\le 2r$ in which complex multiplication by $x\in\mathcal{O}_K$ acts as multiplication by $x^{2r-j}\bar{x}^j$, by which we mean the correspondences induced by both maps act the same way over the motive $eh^1(A)$. This is because a $K$-representation of $\boldH$ is the direct sum of representations of the form $\sigma^{i_1}\otimes\bar{\sigma}^{i_2}$, where $i_1,i_2\in\Z_{\ge 0}$ and $\sigma\colon K\hookrightarrow K$ is a fixed embedding seen as the standard representation of $\boldH$ (see \cite[Remarque 4.8]{Ancona.Thesis}), and since there are $2r+1$ of such factors for $i_1+i_2=2r$, those are all of them. Define \[h^{(2r-j,j)}(A)\defeq\Anc_{S_{cp^n}/K}(W^{[j]}).\]
More generally, an embedding $\sigma_F\colon K\hookrightarrow F$ can be seen as an $F$-representation of $\boldH$. Consider the algebraic representation $V^{(2r-j,j)}$ of $\boldG\times\boldH$ which over $F$ is given by \[V^{(2r-j,j)}(F)=\left(\Sym^{2r}(F^2)\right)^\vee\boxtimes\left(\sigma_F^{2r-j}\otimes\bar{\sigma}_F^{j}\right),\]
where $\boxtimes$ denotes the external tensor product (we often simplify notation by suppressing the lower level tensor product and denote this one by $\otimes$ instead). Define also the motive \[\mathscr{M}^{(2r-j,j)}\defeq \mathscr{V}^{2r}\otimes h^{(2r-j,j)}(A)\in\CHM_K(\widetilde{X}_m\times S_{cp^n}).\] Combining all of the above, it follows from the tensoriality of the Ancona functor that \[\mathscr{M}^{(2r-j,j)}=\Anc_{\widetilde{X}_m\times S_{cp^n}/K}\left(V^{(2r-j,j)}(K)\right).\]

The projection $\operatorname{TSym}^{2r}(eh^1(A))\twoheadrightarrow h^{(2r-j,j)}(A)$ gives a correspondence $\varpi^{[j]}\colon\mathscr{M}^{2r}\to\mathscr{M}^{(2r-j,j)}$ in $\Corr^0_{\widetilde{X}_m}(W_{k,m})$ which induces a map $\varpi^{[j]}\colon\mathscr{M}_{\et}^{2r}\to\mathscr{M}^{(2r-j,j)}_{\et}$ under the étale realization of motives and therefore a pushforward map in the étale cohomology
\begin{equation}\label{eq.jComponentMap}
\varpi^{[j]}\colon H^2_{\et}\left(\widetilde{X}_m\otimes_\Q F_{cp^n},\mathscr{M}_{\et}^{2r}(1)\right)\longrightarrow H^2_{\et}\left(\widetilde{X}_m\otimes_\Q F_{cp^n},\mathscr{M}^{(2r-j,j)}_{\et}(1)\right),
\end{equation}

\begin{definition}
The image of $z_{\phi,m}^{[k]}$ under \eqref{eq.jComponentMap} is a class $z_{\phi,m}^{[k,j]}$, the \emph{$j$-component} of the generalized Heegner class $z_{\phi,m}^{[k]}$.
\end{definition}

\subsection{Basis vectors from Heegner points}\label{subsec.BasisVectors}
Let $i\colon\mathbf{H}\hookrightarrow\mathbf{G}$ (resp. $i_{cp^n}\colon \mathbf{H}\hookrightarrow\mathbf{G})$ be the embedding induced by $\iota_K^B$ (resp. $i_{cp^n}$, in the notation of \S\ref{subsec.CMpoints}).

Let $\sigma_F\colon K\hookrightarrow F$ be an embedding of $K$ into a field $F$, and let $\bar\sigma_F\colon K\hookrightarrow F$ denote its conjugate, in the sense that $\bar{\sigma}_F(\vartheta)\defeq\sigma_F(\bar{\vartheta})$. Associated to each Heegner point $x_{cp^n,m}(1)=[(\iota_K^B,\xi_{cp^n})]\in \widetilde{X}_m$, we define vectors in $F^2$ \[v_{cp^n}\defeq b_{cp^n}^{-1}\binom{\sigma_F(\vartheta)}{1}\hspace{10pt}\text{and}\hspace{10pt}\bar{v}_{cp^n}\defeq b_{cp^n}^{-1}\binom{\bar{\sigma}_F({\vartheta})}{1}\] after the decomposition $\xi_{cp^n}=b_{cp^n}u_{cp^n}$ with $b_{cp^n}\in B^\times$ and $u_{cp^n}\in \widetilde{U}_m$ as in \S\ref{subsec.CMpoints}). By definition, $v_{cp^n}$ is an eigenvector for the action of $\boldH(K)$ through $i_{cp^n}$, in the sense that $i_{cp^n}(x)v_{cp^n}=\sigma_F(x)v_{cp^n}$ for any $x\in\boldH(K)=K^\times$, where $i_{cp^n}(x)\in\boldG(K)=B^\times$ is understood as an element in $\M_2(F)$ through $\iota_B^{\M_2(K)}\otimes_K F$. By seeing $eV_{\boldG}(F)\cong F^2$ as an $F$-representation of $\boldG$ as in \S\ref{subsec.RepresentationsAssociatedToMotives}, the set of vectors \[v_{cp^n}^{[k,j]}\defeq v_{cp^n}^{\cdot(2r-j)}\cdot \bar{v}_{cp^n}^{\cdot j}\]for varying $0\le j\le 2r$ is a base for the $F$-representation $\Sym^{2r}(F^2)$ of $\boldG$.

Dually, set $\vartheta^\ast\defeq -1/\bar{\vartheta}$ and define the vectors in $(F^2)^\vee$ \[e_{cp^n}\defeq  b_{cp^n}^\mathsf{T}\binom{\sigma_F(\vartheta^{\ast})}{1}\hspace{10pt}\text{and}\hspace{10pt}\bar{e}_{cp^n}\defeq b_{cp^n}^\mathsf{T}\binom{\bar{\sigma}_F(\vartheta^\ast)}{1}.\] Similarly, $e_{cp^n}$ is an eigenvector for the ``dual'' action of $\boldH(K)$, $(i_{cp^n}(x)^{-1})^\mathsf{T}e_{cp^n}=\sigma_F(x)^{-1}e_{cp^n}$ for any $x\in\boldH(K)=K^\times$, being dual in the sense that it corresponds to the dual $\sigma_F^{-1}$ of the standard representation $\sigma_F\colon K^\times\to V_{\boldH}(F)=F^\times$. The set of vectors \[e_{cp^n}^{[k,j]}\defeq e_{cp^n}^{\odot(2r-j)}\odot \bar{e}_{cp^n}^{\odot j}\]for varying $0\le j\le 2r$ is a base for the $F$-representation $(\Sym^{2r}(F^2))^\vee\cong\TSym^{2r}\left((F^2)^\vee\right)$.

\begin{lemma}\label{lem.vartheta} 
If $p$ is split in $K$, then $e_{cp^n}=u\cdot\binom{p^n\sigma_F(\delta)}{1}$ and $\bar{e}_{cp^n}=u\cdot\binom{0}{1}$, with $u\in\GL_2(\mathcal{O}_F)$. If $p$ is not split in $K$, then $e_{cp^n}=u\cdot\binom{p^n\sigma_F(\bar{\vartheta})}{1}$ and $\bar{e}_{cp^n}=u\cdot\binom{p^n\sigma_F(\vartheta)}{1}$, with $u\in\GL_2(\mathcal{O}_F)$. In every case, $e_{cp^n}-\bar{e}_{cp^n}=u\cdot\binom{\pm p^n\sigma_F(\delta)}{0}\in p^n(\mathcal{O}_F^\vee)^2$.
\end{lemma}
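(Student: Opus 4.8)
The plan is to reduce the assertion to an explicit $2\times 2$ matrix computation carried out at the prime $p$ only. By construction, $e_{cp^n}$ and $\bar{e}_{cp^n}$ are obtained by applying $b_{cp^n}^{\mathsf{T}}$ --- the transpose of the matrix attached to $b_{cp^n}\in B^\times$ via $\iota_B^{\M_2(K)}$ followed by $\sigma_F$ --- to the column vectors $\binom{\sigma_F(\vartheta^\ast)}{1}$ and $\binom{\bar\sigma_F(\vartheta^\ast)}{1}$. The first step is to see that this matrix is determined, up to left and right multiplication by $\GL_2(\mathcal{O}_F)$, by the single local factor $\xi_{cp^n}^{(p)}$. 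Indeed, from the factorization $\xi_{cp^n}=b_{cp^n}u_{cp^n}$ of \S\ref{subsec.CMpoints} the $p$-component of $b_{cp^n}$ under $i_p$ equals $\xi_{cp^n}^{(p)}\,i_p(u_{cp^n,p})^{-1}$, and $i_p(u_{cp^n,p})\in\GL_2(\Z_p)\subseteq\GL_2(\mathcal{O}_F)$ since $u_{cp^n}\in\widetilde{U}_m$ has $p$-component upper triangular modulo $p^m$; moreover $i_p$ and $\iota_B^{\M_2(K)}$ were normalized in \S\ref{subsec.QuaternionAlgebras} to agree on $\iota_K^B(\vartheta)$, hence over $F$ they differ by conjugation by an element centralizing $\iota_K^B(K)$, i.e.\ by an element of a split maximal torus, which the integrality requirement of the normalization keeps in $\mathcal{O}_F^\times\cdot\GL_2(\mathcal{O}_F)$ and which --- being the transpose of a torus element --- scales the eigenvectors $\binom{\sigma_F(\vartheta^\ast)}{1}$ and $\binom{\bar\sigma_F(\vartheta^\ast)}{1}$ by unit scalars.

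Granting this, I would run the bare-hands computation. In the split case $\xi_{cp^n}^{(p)}=\smallmat{\sigma_F(\vartheta)}{-1}{1}{0}\smallmat{p^n}{0}{0}{1}=\smallmat{p^n\sigma_F(\vartheta)}{-1}{p^n}{0}$, so $(\xi_{cp^n}^{(p)})^{\mathsf{T}}=\smallmat{p^n\sigma_F(\vartheta)}{p^n}{-1}{0}$, and a one-line multiplication using $\vartheta^\ast=-1/\bar\vartheta$ and the identity $\sigma_F(\vartheta)-\bar\sigma_F(\vartheta)=\sigma_F(\delta)$ gives
\[
(\xi_{cp^n}^{(p)})^{\mathsf{T}}\binom{\sigma_F(\vartheta^\ast)}{1}=\sigma_F(\bar\vartheta)^{-1}\binom{-p^n\sigma_F(\delta)}{1},\qquad
(\xi_{cp^n}^{(p)})^{\mathsf{T}}\binom{\bar\sigma_F(\vartheta^\ast)}{1}=\sigma_F(\vartheta)^{-1}\binom{0}{1}.
\]
Absorbing the residual unit scalars (and the sign) into a common $u\in\GL_2(\mathcal{O}_F)$ as above yields $e_{cp^n}=u\binom{p^n\sigma_F(\delta)}{1}$ and $\bar{e}_{cp^n}=u\binom{0}{1}$. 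The non-split case is formally identical, now with $\xi_{cp^n}^{(p)}=\smallmat{0}{-1}{1}{0}\smallmat{p^n}{0}{0}{1}$; the same manipulation produces $e_{cp^n}=u\binom{p^n\sigma_F(\bar\vartheta)}{1}$ and $\bar{e}_{cp^n}=u\binom{p^n\sigma_F(\vartheta)}{1}$. The last assertion is then read off by subtracting the normal forms: one gets $e_{cp^n}-\bar{e}_{cp^n}=u\binom{p^n\sigma_F(\delta)}{0}$ in the split case and $u\binom{-p^n\sigma_F(\delta)}{0}$ in the non-split one, and since $u\in\GL_2(\mathcal{O}_F)$ preserves the lattice $(\mathcal{O}_F^\vee)^2$ this element lies in $p^n(\mathcal{O}_F^\vee)^2$.

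The main --- indeed essentially the only --- obstacle is the bookkeeping in the first step: packaging the adelic $\xi_{cp^n}\in\widehat{B}^\times$ together with the global $b_{cp^n}\in B^\times$ into a single matrix over $F$ carrying a controlled $\GL_2(\mathcal{O}_F)$-integral structure, and verifying that each discrepancy --- the $p$-component of $u_{cp^n}$, the change of splitting between $i_p$ and $\iota_B^{\M_2(K)}$, and the scalars $\sigma_F(\vartheta^\ast)$, $\sigma_F(\vartheta)^{-1}$, $\sigma_F(\bar\vartheta)^{-1}$ introduced when passing to the dual vectors and clearing denominators --- is absorbed by that ambiguity, and in particular that a \emph{single} $u$ serves for both $e_{cp^n}$ and $\bar{e}_{cp^n}$. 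This is exactly where the precise normalization of the local isomorphisms in \S\ref{subsec.QuaternionAlgebras} and of the identification $eV_{\boldG}(F)\cong F^2$ from \S\ref{subsec.RepresentationsAssociatedToMotives} does the work; everything past it is elementary $2\times 2$ arithmetic.
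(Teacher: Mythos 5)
Your proposal is correct and follows essentially the same route as the paper's own proof: reduce to a computation local at $p$ via the factorization $\xi_{cp^n}=b_{cp^n}u_{cp^n}$ (the paper writes $b_{cp^n}=b_0\smallmat{p^n}{0}{0}{1}u_{cp^n}^{-1}$ with $b_0=\smallmat{\bigstar}{-1}{1}{0}$, which is exactly your $\xi_{cp^n}^{(p)}u_{cp^n,p}^{-1}$), apply the transpose to the dual CM vectors, and take $u=(u_{cp^n}^{-1})^{\mathsf T}$ after absorbing the residual unit scalars. The paper is in fact terser than you are about the adelic-versus-global bookkeeping and the scalars $\sigma_F(\vartheta)^{-1},\sigma_F(\bar\vartheta)^{-1}$ (it simply works projectively), so no further comparison is needed.
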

\begin{proof}
The question is local at $p$. Recall the decomposition $\xi_{cp^n}=b_{cp^n}u_{cp^n}$ from \S\ref{subsec.CMpoints}, so $b_{cp^n}=b_0\cdot\big(\begin{smallmatrix}p^n & 0 \\ 0 & 1\end{smallmatrix}\big)\cdot u_{cp^n}^{-1}$, with $b_0=\big(\begin{smallmatrix}\bigstar & -1 \\ 1 & 0\end{smallmatrix}\big)$ for some $\bigstar\in K$ depending on whether $p$ splits or not. Therefore \[b_0^\mathsf{T}\cdot\binom{\sigma_F(\vartheta^\ast)}{1}=\dfrac{\bigstar\cdot\vartheta^\ast+1}{-\vartheta^\ast}=-\bigstar+\bar{\vartheta}\hspace{5pt}\text{and}\hspace{5pt}b_0^\mathsf{T}\cdot\binom{\sigma_F(\bar{\vartheta}^\ast)}{1}=\dfrac{\bigstar\cdot\bar{\vartheta}^\ast+1}{-\bar{\vartheta}^\ast}=-\bigstar+{\vartheta}.\]
If $p$ is split, then $\bigstar=\sigma_F(\vartheta)$, otherwise $\bigstar=0$. In every case, we take $u=(u_{cp^n}^{-1})^\mathsf{T}$.
\end{proof}

The representation $i^\ast\TSym^{2r}((F^2)^\vee)$ decomposes as a sum of $2r+1$ one-dimensional representations of the form $\sigma_F^{i_1}\otimes\bar\sigma_F^{i_2}$ with $i_1+i_2=2r$, among which we have the summand $\sigma_F^{2r-j}\otimes\bar\sigma_F^j$, for which $i^\ast e^{[k,j]}_{cp^n}$ is a base vector. Then $e^{[k,j]}_{cp^n}\otimes i^\ast e^{[k,j]}_{cp^n}$ 
defines an element in the $F$-representation 
\[V^{(2r-j,j)}(F)=\TSym^{2r}((F^2)^\vee)\boxtimes(\sigma_F^{(2r-j)}\otimes\bar\sigma_F^{j})\]
of $\mathbf{G}\times\mathbf{H}$, 
which is invariant under the diagonal action of $K$ by $i_{cp^n}\times\id$. To simplify the notation, we write just $e^{[k,j]}_{cp^n}$ for all instances of this vector.

\subsection{From basis vectors to generalized Heegner classes}\label{subsec.BasisVectorsToGHCs}
Since $S_{cp^n}$ is a model over $K$ of the Shimura variety $\Sh(\boldH,\{\ast\})=K^\times\backslash\A_{K,\mathrm{fin}}^\times/U_{cp^n,\mathfrak{N}^+}$, base changing to $F_{cp^n}$ gives an isomorphism
\begin{equation}\label{eq.Varrho}
\varrho\colon ((X_m\otimes_\Q K)\times S_{cp^n})\otimes_KF_{cp^n}\stackrel{\sim}{\longrightarrow} \widetilde{X}_m\otimes_K F_{cp^n}.
\end{equation}
To simplify the notation, write $X_m\times S_{cp^n}\defeq (X_m\otimes_\Q K)\times S_{cp^n}$.

In order to associate the basis vectors from the previous subsection to the generalized Heegner classes defined in \S\ref{subsec.GeneralizedHeegnerClasses}, start by considering $e^{[k,j]}_{cp^n}$, with the same symbol, as an element in the twisted representation $V^{(2r-j,j)}_{F_{cp^n}}(1)$. Since this representation is finite-dimensional,
\[e^{[k,j]}_{cp^n}\in\Hom_{F_{cp^n}}\left(V^{(2r-j,j)}_{F_{cp^n}}(1)^\vee,F_{cp^n}\right)\subseteq\Hom_{F_{cp^n}}\left(\mathscr{M}^{2r}_{\et}(1)^\vee,\mathds{1}\right),\] where the inclusion comes from $\Anc_{\widetilde{X}_m\times S_{cp^n}/F_{cp^n}}$ and we further use $\varrho$ to see this morphism defined in $\CHM_{F_{cp^n}}(\widetilde{X}_m)$; here, $\mathds{1}=(\widetilde{X}_m,\id,0)$ denotes the trivial motive. In view of \S\ref{subsec.Lieberman} and \S\ref{subsec.jComponents}, we have the explicit description \[\mathscr{M}_{\et}^{2r}(1)^\vee=(W_{k,m}\otimes_H F_{cp^n},\varpi^{[j]}\circ\epsilon_W,2r+1)^\vee=(W_{k,m}\otimes_H F_{cp^n},(\varpi^{[j]}\circ\epsilon_W)^\mathsf{T},2r),\] so, by the definition of the morphisms in the category of Chow motives as correspondences,
\[e_{cp^n}^{[k,j]}\in\Corr^{-2r}_{\widetilde{X}_m}(W_{k,m}\otimes_H F_{cp^n},\widetilde{X}_m\otimes_K F_{cp^n})\circ(\varpi^{[j]}\circ\epsilon_W)^\mathsf{T}\cong\varpi^{[j]}\circ\epsilon_W\CH^{2r+1}(W_{k,m}\otimes_H F_{cp^n})_{\Q},\] where we further use the homotopy invariance property of relative Chow groups. As a cycle (in the notation of \S\ref{subsec.CMpoints}), $e_{cp^n}^{[k,j]}$ is the graph of the map $\varphi\colon A^r\times_{\widetilde{X}_m}\widetilde{\mathcal{A}}_m^r\to\widetilde{X}_m$ given by $(P^r,\phi_{cp^n}(P)^r)\mapsto\vartheta_{cp^n}$, where $(\langle P\rangle,P)$ is the level $\Gamma_1(p^m)$ structure attached to $A$ and $(\langle\phi_{cp^n}(P)\rangle,\phi_{cp^n}(P))$ is the correspondent level $\Gamma_1(p^m)$ structure attached to the section of $\widetilde{\mathcal{A}}_m$ lying over $\vartheta_{cp^n}$, that is, $\phi_{cp^n}(A)=A_{\vartheta_{cp^n}}$. In other words, $e_{cp^n}^{[k,j]}$ corresponds as a cycle to \[\varpi^{[j]}\circ\epsilon_W\operatorname{graph}(\phi_{cp^n})^r=\varpi^{[j]}(\Delta_{cp^n,m}^{[k]}),\] the $j$-component of the generalized Heegner cycle attached to $\phi_{cp^n}$. As an element of the group $\varpi^{[j]}\epsilon_W\CH^{2r+1}(W_{k,m}\otimes_H F_{cp^n})_{\Q}$, $\varpi^{[j]}(\Delta_{cp^n,m}^{[k]})$ maps under \eqref{eq.LTfullmap} to $z_{cp^n,m}^{[k,j]}$.

\subsection{Gysin map}\label{subsec.GysinMap}
Each of the embeddings $i_{cp^n}$ induces an embedding of $\delta_{cp^n}\colon \mathbf{H}\hookrightarrow\mathbf{G}\times\mathbf{H}$, defined by $\delta_{cp^n}=(i_{cp^n},\id)$, where $\id$ is the identity map. This map induces a pullback at the level of Shimura varieties defined over $K$ given by \[\delta_{cp^n}^\ast\colon\widetilde{X}_m\times S_{cp^n}\longrightarrow S_{cp^n},\] which further induce maps at the level of representations, Chow motives and lisse étale sheaves, the former two being compatible via Ancona functors and the latter two being compatible via étale realization functors (using \cite[Theorem 9.7]{Torzewski}):
\begin{center}
\begin{tikzcd}[column sep = huge]
\Rep_{K}(\boldG\times\boldH)\arrow{r}{\Anc_{\widetilde{X}_m\times S_{cp^n}/K}}\arrow{d}{\delta_{cp^n}^\ast}&\CHM_K(\widetilde{X}_m\times S_{cp^n})\arrow{r}{\Et_{\widetilde{X}_m\times S_{cp^n}/K}}\arrow{d}{\delta_{cp^n}^\ast}&\CHM_{\et,L}(\widetilde{X}_m\times S_{cp^n})\arrow{d}{\delta_{cp^n}^\ast}\\
\Rep_{K}(\boldH)\arrow{r}{\Anc_{S_{cp^n}/K}}&\CHM_K(S_{cp^n})\arrow{r}{\Et_{S_{cp^n}/K}}&\CHM_{\et,L}(S_{cp^n})
\end{tikzcd}
\end{center}

The embedding $\delta_{cp^n}$ also induces a pushforward at the level of Shimura varieties: \[\delta_{cp^n,\ast}\colon S_{cp^n}\longmono \widetilde{X}_m\times S_{cp^n},\] which in turn induces a Gysin (``wrong way'') map at the cohomological level, \[H^0_{\et}\left(S_{cp^n},\delta_{cp^n}^\ast(\mathscr{M}_{\et}^{(2r-j,j)})\right)\longrightarrow H^2_{\et}\left({X}_m\times S_{cp^n},\mathscr{M}_{\et}^{(2r-j,j)}(1)\right),\]which happens to have precisely the index and twist to land in \[H^2_{\et}(\widetilde{X}_m\otimes_\Q F_{cp^n},\mathscr{M}^{2r}_{\et}(1))\stackrel{\eqref{eq.Varrho}}{\cong} H^2_{\et}(\widetilde{X}_m\times S_{cp^n},\mathscr{M}^{2r}_{\et}(1)).\]

As observed in \cite[p. 132]{JetchevLoefflerZerbes} (see also \cite[Definition 3.1.2, \S 5.2]{KingsLoefflerZerbes.MF}), the Gysin map gives an equivalent and more practical way to express the relation between $e_{cp^n}^{[k,j]}$ to $z_{cp^n,m}^{[k,j]}$. Compose the two maps above into
\begin{equation}\label{eq.Gysin}
\delta_{cp^n,\ast}\colon H^0_{\et}\left(S_{cp^n},\delta_{cp^n}^\ast(\mathscr{M}_{\et}^{(2r-j,j)})\right)\longrightarrow 
H^2_{\et}\left({X}_m\otimes_KF_{cp^n},\mathscr{M}_{\et}^{(2r-j,j)}(1)\right),
\end{equation}
which we will refer to as the \textit{Gysin map} induced by $\delta_{cp^n}$. In view of \S\ref{subsec.BasisVectorsToGHCs}, we summarize the relation between basis vectors and generalized Heegner classes as follows:

\begin{proposition}\label{prop.Gysin}
The image under $\delta_{cp^n,\ast}$ of the basis vector $e_{cp^n}^{[k,j]}$ is $z_{cp^n,m}^{[k,j]}$.
\end{proposition}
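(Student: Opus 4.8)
The plan is to identify the Gysin pushforward \eqref{eq.Gysin} of $e_{cp^n}^{[k,j]}$ with the class produced by the cycle-theoretic recipe of \S\ref{subsec.BasisVectorsToGHCs}, by first lifting the Gysin map to the category of relative Chow motives and only then passing to étale realizations. Recall from \S\ref{subsec.BasisVectors} and \S\ref{subsec.BasisVectorsToGHCs} that $e_{cp^n}^{[k,j]}$ has two compatible incarnations: on the one hand it is a section in $H^0_{\et}(S_{cp^n},\delta_{cp^n}^\ast\mathscr{M}_{\et}^{(2r-j,j)})$, and on the other hand, through $\Anc_{\widetilde{X}_m\times S_{cp^n}/K}$ together with the base change \eqref{eq.Varrho}, it is a morphism of Chow motives $\mathscr{M}_{\et}^{2r}(1)^\vee\to\mathds{1}$, i.e.\ an element of $\varpi^{[j]}\circ\epsilon_W\,\CH^{2r+1}(W_{k,m}\otimes_HF_{cp^n})_{\Q}$, which as a cycle coincides with $\varpi^{[j]}\circ\epsilon_W\operatorname{graph}(\phi_{cp^n})^r=\varpi^{[j]}(\Delta_{cp^n,m}^{[k]})$.

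First I would realize the Gysin map at the motivic level. The pushforward $\delta_{cp^n,\ast}\colon S_{cp^n}\hookrightarrow\widetilde{X}_m\times S_{cp^n}$ is a closed immersion of smooth projective $K$-schemes, hence induces a Gysin correspondence; composing this correspondence with $e_{cp^n}^{[k,j]}$, viewed in $\Corr^{-2r}_{\widetilde{X}_m}(W_{k,m}\otimes_HF_{cp^n},\widetilde{X}_m\otimes_KF_{cp^n})\circ(\varpi^{[j]}\circ\epsilon_W)^{\mathsf T}$, produces by the composition law for correspondences exactly the graph of the map $(P^r,\phi_{cp^n}(P)^r)\mapsto\vartheta_{cp^n}$ appearing in \S\ref{subsec.BasisVectorsToGHCs}. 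The input one needs here is that the point of $\widetilde{X}_m$ cut out by $\delta_{cp^n}$ is the moduli class of $(A_{\vartheta_{cp^n}},\dots)$, related to $(A,\dots)$ through the isogeny $\phi_{cp^n}$ exactly as in Remark \ref{rem.Factorization}. Thus the motivic Gysin pushforward of $e_{cp^n}^{[k,j]}$ is $\varpi^{[j]}(\Delta_{cp^n,m}^{[k]})$ in the relative Chow group, equivalently $r_{\mot}\big(\varpi^{[j]}(\Delta_{cp^n,m}^{[k]})\big)$ in motivic cohomology.

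It then remains to check that étale realization intertwines the motivic and the cohomological Gysin maps. This follows from the commutative diagram of \S\ref{subsec.GysinMap} and \cite[Theorem 9.7]{Torzewski}: the map \eqref{eq.Gysin} is the étale realization of the motivic Gysin map, so $\delta_{cp^n,\ast}(e_{cp^n}^{[k,j]})$ equals the image of $r_{\mot}\big(\varpi^{[j]}(\Delta_{cp^n,m}^{[k]})\big)$ under $r_{\et,L}$ followed by \eqref{eq.LTtwisted}, \eqref{eq.jComponentMap} and the identifications \eqref{eq.distributingTwists}, \eqref{eq.Varrho}. By the definitions in \S\ref{subsec.jComponents}, that image is precisely $z_{cp^n,m}^{[k,j]}$. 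This is the quaternionic transcription of the observation of \cite[p.~132]{JetchevLoefflerZerbes} (see also \cite[\S5.2]{KingsLoefflerZerbes.MF}).

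The main obstacle I anticipate is the bookkeeping around codimensions and twists: one must verify that the codimension of $S_{cp^n}$ inside $\widetilde{X}_m\times S_{cp^n}$ (which equals $1$ once the product is identified with $\widetilde{X}_m\otimes_KF_{cp^n}$ via \eqref{eq.Varrho}) matches the single Tate twist in the target of \eqref{eq.Gysin}, and that the correspondence composition computing $\delta_{cp^n,\ast}\circ e_{cp^n}^{[k,j]}$ really lands in $\varpi^{[j]}\circ\epsilon_W\,\CH^{2r+1}$ without losing a projector or a symmetrization factor along the way. Since the cycle identification has already been carried out in \S\ref{subsec.BasisVectorsToGHCs}, the essential content of the argument is just recognizing the Gysin map \eqref{eq.Gysin} as the cohomological shadow of that identification, so once the compatibilities above are in place the proof is immediate.
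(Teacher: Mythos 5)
Your proposal is correct and follows essentially the same route as the paper: the paper gives no separate proof, stating the proposition as a summary of the cycle-level identification $e_{cp^n}^{[k,j]}=\varpi^{[j]}(\Delta_{cp^n,m}^{[k]})$ carried out in \S\ref{subsec.BasisVectorsToGHCs}, together with the observation (citing \cite[p.~132]{JetchevLoefflerZerbes} and \cite[\S5.2]{KingsLoefflerZerbes.MF}) that the Gysin map \eqref{eq.Gysin} is the cohomological realization of that identification. Your unpacking of the motivic Gysin correspondence and the compatibility of étale realization via \cite[Theorem 9.7]{Torzewski} is precisely the content the paper leaves implicit.
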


\subsection{The Abel--Jacobi map}\label{subsec.AbelJacobi}
The degeneration at page 2 of the Hochschild--Lyndon--Serre spectral sequence \cite[\S 1.2]{Nekovar.Banff} yields an isomorphism
\begin{equation}\label{eq.degenerationHLS}
H^2_{\et}\left({X}_m\otimes_\Q F_{cp^n},\mathscr{M}^{(2r-j,j)}_{\et}(1)\right)\stackrel{\sim}{\longrightarrow}H^1\left(F_{cp^n},H^1_{\et}({X}_m\otimes_\Q\overline{\Q},\mathscr{M}^{(2r-j,j)}_{\et}(1))\right).\end{equation}
The embeddings $\sigma_L,\bar{\sigma}_L\colon K\hookrightarrow L$, seen as Galois characters, induce $p$-adic characters
\[\sigma_{\et},\bar\sigma_{\et}\colon \Gal(K^\mathrm{ab}/F_{\mathfrak{N}^+})\longrightarrow L^\times\] defined by $x\mapsto \sigma_L^{-1}(x^{(p)})$ 
and $x\mapsto\bar\sigma_L^{-1}(x^{(p)})$, respectively, where $x^{(p)}$ is the $p$-component of $x$ as an element of $(1+\mathfrak{N}^+\widehat{\mathcal{O}}_K)^\times$, which is isomorphic to $\Gal(K^\mathrm{ab}/F_{\mathfrak{N^+}})$ under the geometrically normalized Artin reciprocity map $\rec_K$ (see Remark \ref{rem.Reciprocity}).

The right-hand side of \eqref{eq.degenerationHLS} can then be further rewritten as
\begin{equation}\label{eq.degenerationHLS2}
H^1\left(F_{cp^n},H^1_{\et}({X}_m\otimes_\Q\overline{\Q},\mathscr{M}^{(2r-j,j)}_{\et}(1))\right)\cong H^1\left(F_{cp^n},H^1_{\et}({X}_m\otimes_\Q\overline{\Q},\mathscr{V}^{2r}_{\et}(1)\otimes\sigma_{\et}^{2r-j}\bar{\sigma}_{\et}^j)\right).
\end{equation}

\begin{definition}
The composition of the maps \eqref{eq.LTfullmap}, \eqref{eq.jComponentMap}, \eqref{eq.degenerationHLS} and \eqref{eq.degenerationHLS2} is the \emph{$p$-adic Abel--Jacobi map} 
\begin{equation}\label{eq.AbelJacobi}
\Phi_m^{[k,j]}\colon \epsilon_{W}\CH^{k-1}(W_{k,m}\otimes_HF_{cp^n})_\Q\longrightarrow H^1\left(F_{cp^n},H^1_{\et}({X}_m\otimes_\Q\overline{\Q},\mathscr{V}^{2r}_{\et}(1)\otimes\sigma_{\et}^{2r-j}\bar{\sigma}^j_{\et})\right).
\end{equation}
In particular, $z_{\phi,m}^{[k,j]}=\Phi_m^{[k,j]}(\Delta_{\phi,m}^{[k]}).$ 
\end{definition}

\section{Modular forms}

Let $R$ be a $p$-adic ring, that is, a $\Z_{p}$-algebra which is complete and Hausdorff with respect to the $p$-adic topology, so that $R \cong \varprojlim_n R/p^nR$. In this section, if $S$ is a $\Z_p$-scheme, $S_R\defeq S\otimes_{\Z_p}R$ indicates the coefficient extension to $R$.

\subsection{$p$-adic modular forms}
Quaternionic $p$-adic modular forms were defined for general weight in \cite{Brasca.QuaternionicMF}, see also \cite{Kassaei.Thesis}.

A \textit{$p$-adic modular form of tame level $N^{+}$ on $B$} is a global section of $\widehat{\mathrm{Ig}}_R$ (see \S\ref{subsec.IgusaTowers}). Let $V_p(N^+,R)$ denote the space of quaternionic modular forms of tame level $N^+$. In terms of the layers of the Igusa tower $\widehat{\mathrm{Ig}}$, 
\[V_p(N^+,R)=H^0(\widehat{\mathrm{Ig}},\mathcal{O}_{\widehat{\mathrm{Ig}}})\otimes_{\Z_p}R\cong\varprojlim_n\varinjlim_m H^0(\mathrm{Ig}_{m,n},\mathcal{O}_{\mathrm{Ig}_{m,n}})\otimes_{\Z_p}R,\]
where, as usual, $\mathcal{O}_S$ denotes the structure sheaf of a scheme $S$. In view of Proposition \ref{prop.ModuliIgusa}, a quaternionic $p$-adic modular form as above is a rule $\mathcal{F}$ that, for each $n,m\ge1$, takes a quadruple $(A,\iota,\alpha, \beta)$ consisting of a QM abelian surface $(A,\iota)$ over an $R$-algebra $\mathcal{R}$ with level $U_1(N^+)$ structure $\alpha$ and an arithmetic trivialization $\beta_m$ of $A[p^m]$ over $\mathcal{R}/p^n\mathcal{R}$, and assigns a value $\mathcal{F}(A,\iota,\alpha, \beta)\in \mathcal{R}/p^n \mathcal{R}$ which is compatible with respect to the canonical maps used to compute the direct and inverse limit, depends only on the isomorphism class of the quadruplet and is compatible under base change given by continuous morphisms between $R$-algebras. 

An element $u\in \Gamma\defeq 1+p\Z_p$ acts on $\boldsymbol{\mu}_{p^\infty}$ by left multiplication. Precomposing an arithmetic trivialization $\beta\in\mathcal{P}_{m,n}$ of $\mathcal{A}^\ord_n[p^\infty]$ with that action $u$ gives a new arithmetic trivialization $u\cdot\beta=(u\times u)\circ\beta\in\mathcal{P}_{m,n}$ which specializes to an arithmetic trivialization of $A[p^\infty]$ for any QM abelian surface $A$ over $R$. This gives a $R[\![\Gamma]\!]$-module structure over $V_p(N^+,R)$ denoted by $\mathcal{F}\mapsto \mathcal{F}|\langle \lambda\rangle$ for $\mathcal{F}\in V_p(N^+,R)$ and $\lambda\in\Lambda_R$, defined for $u\in\Gamma$ by 
\[ (\mathcal{F}|\langle u\rangle)(A,\iota,\alpha, \beta )=\mathcal{F} (A,\iota,\alpha, u\cdot \beta)\]
and for any other $\lambda\in R[\![\Gamma]\!]$ by extending $R$-linearly.
 
\begin{definition}
Let $\psi\colon \Gamma\rightarrow \boldsymbol{\mu}_{p^{\infty}}(\overline\Q_p)$ be a finite order character and $k \in \Z_p$ a $p$-adic integer. We say that a $p$-adic modular form $\mathcal{F}$ is of \emph{signature} $(k, \psi)$ if for every $u\in\Gamma$, we have $\mathcal{F}|\langle u\rangle=u^{k}\psi(u)\mathcal{F}$. 
\end{definition}

\subsection{Geometric modular forms}

As explained in \cite{Kassaei.Thesis}, the classical quaternionic $p$-adic modular forms are those coming from geometry. Let $\pi\colon A\to S$ be a QM abelian surface over $R$ and $\Omega_{A/R}$ denote the bundle of relative differentials. Then quaternionic multiplication by $\mathcal{O}_B$, after extension of scalars to $\Z_p$, gives an action on $\pi_* \Omega_{A/R}$. In particular, the local idempotent $e_p$ from \S\ref{subsubsec.Drinfeld} (which coincides with the global idempotent $e$ from \S\ref{subsec.Idempotents} if $p$ is split) acts over $\pi_* \Omega_{A/R}$ allowing us to define the invertible sheaf $\underline{\omega}_{A/R}\defeq e_p\pi_* \Omega_{A/R}$. 

A \emph{test object} over an $R$-algebra $\mathcal{R}$ is a quintuplet $T=(A, \iota,\alpha,(H,P),\omega)$ consisting of a QM abelian surface $(A,\iota)$ over $\mathcal{R}$, a level $U_1(N^+)$ structure $\alpha$ on $A$, a level $\Gamma_1(p^m)$ structure $(H,P)$ on $A$ and a non-vanishing global section of the line bundle $\underline{\omega}_{A/\mathcal{R}}$ of relative differentials. Two test objects are \emph{isomorphic} if there is an isomorphism of QM abelian surfaces which induces isomorphisms of level $U_1(N^+)$ and $\Gamma_1(p^m)$ structures and pulls back the generator of the differentials of $A'$ to that of $A$. 

\begin{definition}
A \emph{$R$-valued geometric modular form on $\widetilde{X}_m$} is a rule $\mathcal{F}$ that assigns to each isomorphism class of test objects $T=(A,\iota, \alpha,(H,P),\omega)$ over an $R$-algebra $\mathcal{R}$ a value $\mathcal{F}(T)\in\mathcal{R}$ which 
\begin{itemize} 
\item is compatible with base changes: if $\varphi\colon\mathcal{R}\rightarrow\mathcal{R}'$ is a morphism of $R$-algebras, \[\mathcal{F}(A' ,\iota',\alpha',(H',P'),\omega') = 
\varphi\big(\mathcal{F}(A,\iota, \alpha,(H,P),\varphi^*(\omega')\big),\] where
$A'=A \otimes_{R,\varphi} \mathcal{R}'$, and $\iota'$, $\alpha'$ and $(H',P')$ are obtained by base change from $\iota$, $\alpha$ and $(H,P)$, respectively;

\item satisfies a weight condition: for any $\lambda\in \mathcal{R}^\times$ \[\mathcal{F}(A,\iota,\alpha,(H,P),\lambda\omega) =
\lambda^{-k}\mathcal{F}(A,\iota, \alpha,(H,P),\omega).\]
\end{itemize}
We denote $M_k(N^+,p^m,R)$ the $R$-module of $R$-valued weight $k$ modular forms on $\widetilde{X}_m$.
\end{definition}

An element $u\in(\Z/p^m\Z)^\times$ acts on a test object $T=(A,\iota, \alpha,(H,P),\omega)$ by left multiplication on $P$ and trivially everywhere else. This action extends to $M_k(N^+,p^m,R)$ by setting \[(\mathcal{F}|\langle u\rangle)(A,\iota, \alpha,(H,P),\omega)=\mathcal{F}(A,\iota, \alpha,(H,u\cdot P),\omega).\]

\begin{definition}
Let $\psi\colon(\Z/p^m\Z)^\times\rightarrow R$ be a finite order character. We say that a modular form 
$\mathcal{F}\in M_k(N^+,p^m,R)$ has \emph{character $\psi$} if $\mathcal{F}|\langle u\rangle=\psi(u)\mathcal{F}$ for all 
$u\in (\Z/p^m\Z)^\times$. 
\end{definition}

Denote by $M_k(N^+p^m,R)$ the $R$-submodule of $M_k(N^+,p^m,R)$ of $R$-valued weight $k$ modular forms on $X_m$ consisting of modular forms with trivial character.

\subsection{From geometric to classical $p$-adic modular forms}\label{subsec.Cartier}
In order to define a $p$-adic modular form from a geometric modular form, one needs to establish a relation between the test objects above with the quadruplets from before, which boils down to defining a level $\Gamma_1(p^m)$ structure $(H,P)$ after an arithmetic trivialization $\beta$ for each $(A,\iota,\alpha)$. In view of \S\ref{subsec.QMAbelianSurfaces}, there is a unique principal polarization $\lambda\colon A\stackrel{\sim}{\to}A^\vee$ whose correspondent Rosati involution of $\End(A)$ coincides with the restriction to $\mathcal{O}_B$ of the involution $\dagger$ of $B$ defined in \S\ref{subsec.QMAbelianSurfaces}.

As explained in \cite[\S 3.1]{Magrone}, an arithmetic trivialization  
$\beta$ determines by Cartier duality a point $x_{\beta}^\vee$ in 
$e^\dagger{\Ta_p(A^\vee)({\overline{\F}_p})}$, where $\Ta_p$ denotes the $p$-adic Tate module. By \cite[p. 150]{Katz}, we have an isomorphism of formal groups over $\overline{\F}_p$
\begin{align*}
\widehat{A}\cong\Hom_{\Z_p}(\Ta_p(A^\vee)(\overline{\F}_p),\widehat{\G}_m) &\implies e\widehat{A}\cong\Hom_{\Z_p}(\Ta_p((eA)^\vee)(\overline{\F}_p),\widehat{\G}_m)\\
&\implies e^\dagger{\Ta_p(A^\vee)({\overline{\F}_p})}\cong\Hom_{\Z_p}(e\widehat{A},\widehat{\G}_m).
\end{align*}
Under the isomorphism above $x_\beta^\vee$ corresponds to some $\varphi_{\beta}\in \Hom_{\Z_p}(e\widehat{A},\widehat{\G}_m)$. Conversely, one such morphism corresponds to a point in $e^\dagger{\Ta_p(A^\vee)({\overline{\F}_p})}$ and, again by Cartier duality, to an arithmetic trivialization on $A$. The pullback of the standard differential $\mathrm{d}T/T$ of $\widehat{\G}_m$ under $\varphi_\beta$, denoted $\omega_{\beta} = \varphi_{\beta }^*(\mathrm{d}T/T)$, defines a differential in $\underline{\omega}_{A/R}$.
 
Fix a generator of $\boldsymbol{\mu}_{p^\infty}$, which induces a generator $\zeta_{p^m}$ of $\boldsymbol{\mu}_{p^m}$ for all $m$. Then $\beta (\zeta_{p^m})$ gives a point $P$ in $A[p^m]$ of exact order $p^m$ and, taking $H$ to be the subgroup scheme generated by $P$, $(H,P)$ gives a level $\Gamma_1(p^m)$ structure on $(A,\iota,\alpha)$. Define \[\widehat{\mathcal{F}}(A,\iota,\alpha, \beta  ) \defeq \mathcal{F}(A,\iota,\alpha, (H,P),\omega_{\beta  }).\] The map $\mathcal{F}\mapsto \widehat{\mathcal{F}}$ is compatible with base change, only depends on the isomorphism class of $(A,\iota,\alpha, \beta )$ and it is compatible with the maps occurring in the direct and inverse limit in the definition of $p$-adic modular forms; thus $\mathcal{F}\mapsto \widehat{\mathcal{F}}$ establishes a map 
\begin{equation}\label{eq.ClassicMF}
M_k(N^+,p^m,R)\longrightarrow V_p(N^+,R)
\end{equation}
sending a geometric modular form $\mathcal{F}$ is of weight $k$ and character $\psi$ to a $p$-adic modular form $\widehat{\mathcal{F}}$ of signature $(k,\psi)$.

\begin{definition}
A $p$-adic modular form is said to be \textit{classic} if it is in the image of \eqref{eq.ClassicMF}.
\end{definition}

\subsection{Hecke operators}\label{subsec.HeckeOperators}
We follow \cite[\S II.2, II.3]{Gouvea} and \cite[\S 3.6]{Brooks}. If $(A,\iota,\alpha,\beta)$ is a quadruplet like in the previous subsections, since $A$ has ordinary reduction at $p$ (see Remark \ref{rem.OrdinaryReduction}), the $p$-torsion subgroup $H$ defined after $\beta$ in the previous subsection is the \textit{canonical subgroup}, that is, the one (and only one) which reduces modulo $p$ to the kernel of the Frobenius morphism (see also \cite[Theorem 1.11]{Kassaei.Thesis}). Denoting by $\phi_0\colon A\twoheadrightarrow A_0\defeq A/H$ the canonical projection, we can define another quadruplet $(A_0,\iota_0,\alpha_0,\beta_0)$ as follows:
\begin{itemize}
\item $\iota_0$ is the pullback of $\iota$ by $\phi_0$;

\item Since $\phi_0$ has degree $p$ which is coprime with $N^+$, $\phi_0$ reduces to an isomorphism in the $N^+$-torsion, so $\alpha_0\defeq\phi_0\circ\alpha$ is a well defined level $U_1(N^+)$ structure on $A_0$;

\item As before, $\beta$ corresponds to a morphism $\varphi_\beta\in\Hom_{\Z_p}(e\widehat{A},\widehat{\G}_m)$. Since $\phi_0$ is étale, it induces an isomorphism on formal completions $\widehat\phi_0\colon\widehat{A}_0\rightarrow\widehat{A}$. The morphism $\varphi\defeq\varphi_\beta\circ(e\widehat{\phi}_0)\in\Hom_{\Z_p}(e\widehat{A}_0,\widehat{\G}_m)$ induces an arithmetic trivialization $\beta_0$ on $A_0$.
\end{itemize}

\begin{definition}
The operator $V_p\colon V_p(N^+,R)\rightarrow V_p(N^+,R)$ is defined
for any $p$-adic modular form $\mathcal{F}$ over $R$ by the equation
$V_p\cdot\mathcal{F}(A,\iota,\alpha,\beta)\defeq \mathcal{F}(A_0,\iota_0,\alpha_0,\beta_0)$.
\end{definition}

Let $H_i$, for $1\le i\le p$, denote all the other $p$-torsion subgroups of $A$. An equivalent construction as the one performed for the canonical subgroup gives quadruplets $(A_i,\iota_i,\alpha_i,\beta_i)$ where $\phi\colon A\twoheadrightarrow A_i\defeq A/H_i$ is the quotient of $A$ by $H_i$.

\begin{definition}
The operator $U_p\colon V_p(N^+,R) \rightarrow V_p(N^+,R)$ is defined for any $p$-adic modular form $\mathcal{F}$ over $R$ by the equation $\displaystyle U_p\cdot\mathcal{F}\defeq\frac{1}{p} \sum_{i=1}^p\mathcal{F}(A_i,\iota_i,\alpha_i,\beta_{i})$.
\end{definition}

\begin{remark}
It follows that $V_p\circ U_p$ is just the identity map $\id$, which is not the case for $U_p\circ V_p$. In the elliptic modular forms case, the second composition kills all terms of index prime to $p$ in the Fourier expansion. Motivated by this classical case, we define the \emph{$p$-depletion} of a $p$-adic modular form $\mathcal{F}$ in $V_p(N^+,R)$ to be
$\mathcal{F}^{[p]} \defeq (\id - U_pV_p)\cdot \mathcal{F}$.
\end{remark}

If $\ell$ is any other prime, the same construction as above gives for each of the $\ell$-torsion groups $H_0,\dots,H_\ell$ and their quotients $\phi_i\colon A\twoheadrightarrow A_i\defeq A/H_i$ a quadruplet $(A_i,\iota_i,\alpha_i,\beta_i)$.

\begin{definition}
The operator $T_\ell\colon V_p(N^+,R) \rightarrow V_p(N^+,R)$ is defined for any $p$-adic modular form $\mathcal{F}$ over $R$ by the equation $\displaystyle T_\ell\cdot \mathcal{F}\defeq\frac{1}{\ell} \sum_{i=0}^\ell\mathcal{F}(A_i,\iota_i,\alpha_i,\beta_{i})$.
\end{definition}

\subsection{Jacquet--Langlands correspondence}

Fix an isomorphism between $\C$ and $\C_p$, the $p$-adic completion of $\overline{\Q}_p$, compatible with the embedding $\iota_p\colon\overline{\Q}\hookrightarrow\overline{\Q}_p$, and denote by $\mathcal{O}_{\C_p}$ the valuation ring in $\C_p$ arising from this choice. The reason why classical quaternionic modular forms receive such name is that they come from actual classical modular forms via the Jacquet--Langlands correspondence:

\begin{proposition}[{\cite[Theorem 16.1]{JacquetLanglands}}]\label{prop.JacquetLanglands}
There is an isomorphism \[\mathrm{JL}\colon S_k(\Gamma_1(Np^m),\C)^{N^{-}\text{-new}}\stackrel{\sim}{\longrightarrow}M_k(N^+,p^m,\C_p),\] where $S_k(\Gamma_1(Np^m),\C)^{N^{-}\text{-new}}$ denotes the $\C$-vector space of classical cuspidal modular forms of level $\Gamma_1(Np^m)$ and that are new at all primes dividing $N^-$. If $f\in S_k(\Gamma_1(Np^m),\C)^{N^{-}\text{-new}}$ has character $\psi$ then so does $\mathcal{F}=\operatorname{JL}(f)$, and the eigenvalues of $f$ and $\mathcal{F}$ with respect to $U_p$ and all Hecke operators $T_\ell$ away from $N=N^+N^-$ coincide.
\end{proposition}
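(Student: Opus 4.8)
The plan is to recover the statement by combining the representation-theoretic Jacquet--Langlands correspondence with the translation between modular forms and automorphic forms on each side of it, and then to track the nebentypus character and the Hecke eigenvalues through this dictionary.

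First I would invoke the adelic Jacquet--Langlands correspondence (the content of \cite[Theorem 16.1]{JacquetLanglands}) in the form relevant here: it gives a Hecke-equivariant bijection $\pi\mapsto\pi^{B}$ between the cuspidal automorphic representations $\pi$ of $\GL_2(\A_{\Q})$ whose local component is square-integrable (special or supercuspidal) at every prime dividing $N^{-}$, and the automorphic representations $\pi^{B}$ of $\boldG(\A_{\Q})=B^{\times}(\A_{\Q})$, characterized by $\pi^{B}_{v}\cong\pi_{v}$ at every place $v\nmid N^{-}$ (in particular at $v=\infty$ and $v=p$) and $\pi^{B}_{\ell}$ determined by the local correspondence at $\ell\mid N^{-}$; this bijection preserves central characters. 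Since $N^{-}$ is squarefree by \ref{cond.Heg3}, the hypothesis that $f\in S_k(\Gamma_1(Np^m),\C)$ is new at every $\ell\mid N^{-}$ says precisely that the local component at $\ell$ of the automorphic representation generated by $f$ is of Steinberg type, hence square-integrable; thus $S_k(\Gamma_1(Np^m),\C)^{N^{-}\text{-new}}$ is exactly the part of the cuspidal spectrum of weight $k$ and level dividing $Np^m$ that lies in the domain of $\pi\mapsto\pi^{B}$.

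Next I would identify the target space. Because $B$ is indefinite, $\boldG(\R)\cong\GL_2(\R)$ and $\widetilde{X}_m(\C)\cong\widetilde{\Gamma}_m\backslash\mathcal{H}^{+}$ is a compact Riemann surface (no cusps, $B$ being a division algebra), so the standard dictionary for automorphic forms on indefinite quaternion algebras identifies $M_k(N^{+},p^m,\C)$ --- the global sections of the weight-$k$ automorphic line bundle on $\widetilde{X}_m$, transported along the fixed isomorphism $\C\cong\C_p$ --- with the space of automorphic forms on $B^{\times}(\A_{\Q})$ of weight $k$, holomorphic at $\infty$, and fixed by $\widetilde{U}_m$. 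Under the isomorphisms $i_{\ell}$ of \S\ref{subsec.QuaternionAlgebras}, the level group $\widetilde{U}_m=U_0(N^{+})\cap U_1(p^m)$ is of $\Gamma_0$-type at $N^{+}$, of $\Gamma_1$-type at $p^m$, and equals the maximal compact $\mathcal{O}_{B,\ell}^{\times}$ at every $\ell\mid N^{-}$; hence the $\widetilde{U}_m$-fixed vectors in $\pi^{B}_f$ match the new-at-$N^{-}$ vectors of level $\Gamma_1(Np^m)$ in $\pi_f$. Taking the direct sum over the relevant $\pi$'s and combining with the previous step produces the desired isomorphism $\operatorname{JL}$.

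Finally I would match the extra structures. The nebentypus $\psi$ is read off from the diamond operators at $p$, equivalently from the finite part of the central character, which the Jacquet--Langlands bijection preserves; so $f$ and $\operatorname{JL}(f)$ carry the same character. For the Hecke eigenvalues, the operators $U_p$ and $T_\ell$ ($\ell\nmid N$) defined geometrically in \S\ref{subsec.HeckeOperators} must first be identified with the usual double-coset operators attached to $\smallmat{1}{0}{0}{\ell}$ (resp. $\smallmat{1}{0}{0}{p}$) on $\widetilde{U}_m$-fixed vectors --- this is the computation carried out in \cite[\S 3.6]{Brooks}, following \cite[\S II.2, II.3]{Gouvea} --- after which equality of the eigenvalues is immediate from $\pi^{B}_{v}\cong\pi_{v}$ at $v=p$ and at $v=\ell\nmid N^{-}$. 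I expect the main obstacle to be bookkeeping rather than anything conceptual: keeping the open-compact-subgroup dictionary strictly consistent (the $\Gamma_0$-versus-$\Gamma_1$ behaviour at $N^{+}$, the precise description of the new-at-$N^{-}$ subspace, and the normalization of the geometric Hecke operators against the adelic ones) so that the source and target of $\operatorname{JL}$ are literally the spaces named in the statement.
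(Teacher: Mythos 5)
The paper offers no proof of this proposition at all: it is stated as a citation to \cite[Theorem 16.1]{JacquetLanglands}, and the burden of translating that representation-theoretic theorem into the classical statement is left implicit. Your outline is exactly that standard translation — adelic correspondence preserving local components away from $N^{-}$ and central characters, dictionary between holomorphic forms of weight $k$ fixed by $\widetilde{U}_m$ and sections over $\widetilde{X}_m$, matching of new vectors at $N^{-}$ with $\mathcal{O}_{B,\ell}^{\times}$-fixed vectors, and identification of the geometric Hecke operators of \S\ref{subsec.HeckeOperators} with the usual double-coset operators — so it is the right proof and consistent with what the citation is standing in for.

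One imprecision worth flagging: your claim that ``new at every $\ell\mid N^{-}$'' forces the local component at $\ell$ to be of Steinberg type is not automatic for $\Gamma_1(Np^m)$-level. A newform whose nebentypus is ramified at $\ell$ can have local component a ramified principal series of conductor $\ell$, which is not square-integrable and does not transfer to $B^{\times}$; on the quaternionic side, the level at $\ell\mid N^{-}$ is the full $\mathcal{O}_{B,\ell}^{\times}$, which only sees unramified twists of the Steinberg representation. The statement is therefore correct only on the subspace where the nebentypus has conductor prime to $N^{-}$ — which is the situation actually used in the paper, since the characters $\psi$ there factor through $(\Z/p^m\Z)^{\times}$ — and your argument should record that restriction (or equivalently define the $N^{-}$-new subspace as the locally special one) for the two sides to match exactly.
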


\subsection{Galois representations}\label{subsec.GaloisRepresentations}
Let $L=\operatorname{Frac}(R)$. Consider $\mathcal{F}\in M_k(N^+,p^m,R)$ an eigenform for all Hecke operators and let $L_\mathcal{F}\subseteq\overline{\Q}_p$ be its Hecke field. Here we take the \textit{$p$-adic Galois representation} of $\mathcal{F}$ to be the Deligne $p$-adic representation \[V_\mathcal{F}\subseteq 
H^{1}_{\et}\left(\widetilde{X}_m\otimes_{\Q}\overline{\Q},(\mathscr{V}_{\et}^k)^\vee\right)\otimes_{\Q_p}L_\mathcal{F}\] characterized by requiring that the trace of the geometric Frobenius element at a prime $\ell\nmid Np$ is equal to $a_\ell(\mathcal{F})$, the Hecke eigenvalue of $T_\ell$. The dual, or contragredient, Galois representation $V_\mathcal{F}^*$ of $V_\mathcal{F}$ is then the maximal quotient  
\[H^{1}_{\et}\left(\widetilde{X}_m\otimes_{\Q}\overline{\Q},\mathscr{V}^{k}_{\et}(1)\right)\otimes_{\Q_p}L_\mathcal{F}\longepi
V_\mathcal{F}^*,\] 
where dual Hecke operators $T_\ell'$ act by $a_\ell(\mathcal{F})$, for all $\ell\nmid Np$, and $U_p$ acts by $a_p(\mathcal{F})$; see \cite[\S 2.8]{KingsLoefflerZerbes.ERL}. 

Let $f$ be a classical eigenform such that $\mathcal{F}=\operatorname{JL}(f)$ and denote by $V_f$ its classical $p$-adic Galois representation with coefficients in $L_\mathcal{F}\cong K_f\otimes_\Q\Q_\ell$. Since the eigenvalues with respect to $T_\ell$ of $f$ and $\mathcal{F}$ coincide in all but finitely many primes $\ell$, their Hecke polynomial (that is, the characteristic polynomial of the image of the Frobenius element) at all but finitely many primes $\ell$ coincide. Since $V_f$ and $V_\mathcal{F}$ are irreducible, they are then isomorphic.

\subsection{$p$-stabilization of modular forms}\label{subsec.Stabilization}

Let $\mathcal{F}=\operatorname{JL}(f)\in M_k(N^+,1,R)$ be a quaternionic modular form over $\widetilde{X}_0$. The \textit{$p$-stabilization} of $\mathcal{F}$ with respect to a root $\upalpha$ of its Hecke polynomial is simply the image under $\mathrm{JL}$ of the $p$-stabilization of $f$ with respect to $\upalpha$.

Consider the canonical degeneration maps
\begin{itemize}
\item $\mathrm{pr}_1\colon \widetilde{X}_1\to \widetilde{X}_0$, which takes a quadruplet $(A,\iota,\alpha,\beta)$ and maps into $(A,\iota,\alpha_p,\beta_p)$, where $\alpha_p$ is (the equivalence class of) $\psi_p\circ\alpha$ with $\psi_p$ the map $(x,y)\mapsto (px,py)$; and $\beta_p=\beta\circ\varphi_p$ with $\varphi_p\colon\boldsymbol{\mu}_{p}\to\boldsymbol{\mu}_{1}$ being the map $x\mapsto x^p$, thus trivial;

\item $\mathrm{pr}_2\colon \widetilde{X}_1\to \widetilde{X}_0$, which takes a quadruplet $(A,\iota,\alpha,\beta)$ and outputs $(A_0,\iota_0,\alpha_0,\beta_0)$, the quadruplet associated to the canonical subgroup of $A$ as explained in \S\ref{subsec.HeckeOperators}.
\end{itemize}

\begin{proposition}\label{prop.ProjStabIsom}
The map
\begin{equation}\label{eq.ProjSlab}
(\mathrm{pr}_\upalpha)_*\defeq(\mathrm{pr}_1)_*-\frac{(\mathrm{pr}_2)_*}{\upalpha}\colon V_{\mathcal{F}_\upalpha}\longrightarrow V_{\mathcal{F}}
\end{equation}
is an isomorphism, where $(\mathrm{pr}_1)_*,(\mathrm{pr}_2)_*\colon V_{\mathcal{F}_\upalpha}\rightarrow V_{\mathcal{F}}$ are induced by the canonical degeneration maps $\mathrm{pr}_1,\mathrm{pr}_2\colon \widetilde{X}_1\to \widetilde{X}_0$.
\end{proposition}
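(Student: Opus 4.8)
The plan is to combine a formal reduction by Schur's lemma with an explicit non--vanishing computation involving the degeneration correspondences.

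\emph{Reduction.} By Proposition~\ref{prop.JacquetLanglands} and \S\ref{subsec.GaloisRepresentations}, $V_\mathcal{F}$ and $V_{\mathcal{F}_\upalpha}$ are the Deligne representations attached to $\mathcal{F}=\operatorname{JL}(f)$ and $\mathcal{F}_\upalpha=\operatorname{JL}(f_\upalpha)$; each is two--dimensional over $L_\mathcal{F}$ and carries an irreducible action of $\Gal(\overline{\Q}/\Q)$ (the $p$--adic Galois representation of a cuspidal eigenform of weight $\ge 2$ is irreducible). Since a $p$--stabilization has the same Hecke eigenvalues away from $p$ as the form it refines, $V_\mathcal{F}$ and $V_{\mathcal{F}_\upalpha}$ have equal geometric Frobenius traces at all $\ell\nmid Np$, hence are isomorphic as Galois representations by the argument already used in \S\ref{subsec.GaloisRepresentations}. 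The degeneration maps $\mathrm{pr}_1,\mathrm{pr}_2\colon\widetilde{X}_1\to\widetilde{X}_0$ are defined over $\Q$ and lift to morphisms of the associated universal QM abelian surfaces --- $\mathrm{pr}_1$ keeping the surface fixed and only modifying the level datum, $\mathrm{pr}_2$ passing to the quotient by the canonical subgroup (\S\ref{subsec.HeckeOperators}) --- so $(\mathrm{pr}_1)_*$, $(\mathrm{pr}_2)_*$, and therefore $(\mathrm{pr}_\upalpha)_*$, are $\Gal(\overline{\Q}/\Q)$--equivariant; they are also equivariant for the prime--to--$p$ Hecke operators, which is what makes $(\mathrm{pr}_i)_*$ carry $V_{\mathcal{F}_\upalpha}$ into $V_\mathcal{F}$. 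Schur's lemma then forces $(\mathrm{pr}_\upalpha)_*$ to be either zero or an isomorphism, so it suffices to prove $(\mathrm{pr}_\upalpha)_*\ne 0$.

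\emph{Non--vanishing.} Write $\upbeta$ for the second root of the Hecke polynomial of $\mathcal{F}$, so that $\upalpha+\upbeta=a_p(\mathcal{F})$ and $\upalpha\upbeta=\psi(p)p^{k-1}$. Using the standard relations between $U_p$ and the degeneration maps on the $p$--old cohomology at level $\widetilde{X}_1$, namely (in one common normalization) $U_p\circ(\mathrm{pr}_2)^*=(\mathrm{pr}_1)^*$ and $U_p\circ(\mathrm{pr}_1)^*=(\mathrm{pr}_1)^*\circ T_p-\langle p\rangle p^{k-1}(\mathrm{pr}_2)^*$, one checks that the ``going up'' map $\iota_\upalpha\defeq(\mathrm{pr}_1)^*-\upbeta\,(\mathrm{pr}_2)^*$ sends $V_\mathcal{F}$ isomorphically onto the $U_p=\upalpha$ eigenspace $V_{\mathcal{F}_\upalpha}$. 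Hence $(\mathrm{pr}_\upalpha)_*\circ\iota_\upalpha$ is an endomorphism of the irreducible $V_\mathcal{F}$, so by Schur a scalar $\mu\in L_\mathcal{F}$, and $(\mathrm{pr}_\upalpha)_*=0$ if and only if $\mu=0$. I would compute $\mu$ by expanding $(\mathrm{pr}_\upalpha)_*\circ\iota_\upalpha$ into the four composites $(\mathrm{pr}_i)_*\circ(\mathrm{pr}_j)^*$ and substituting their values on $V_\mathcal{F}$: $(\mathrm{pr}_i)_*\circ(\mathrm{pr}_i)^*$ acts by (a weight--normalized multiple of) the degree of $\mathrm{pr}_i$, which is a $p$--adic unit, while for $i\ne j$ the composite acts through the Hecke eigenvalue $a_p(\mathcal{F})=\upalpha+\upbeta$ and a diamond operator, simplified using $\upalpha\upbeta=\psi(p)p^{k-1}$. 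Carrying this through --- exactly as in the classical computations of \cite{Howard} and \cite{JetchevLoefflerZerbes} (see also \cite{LoefflerZerbes.Rankin}) --- yields that $\mu$ is a nonzero element of $L_\mathcal{F}$, so $(\mathrm{pr}_\upalpha)_*$ is an isomorphism.

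\emph{Main obstacle.} The genuine work is the calculation of $\mu$: one must pin down the weight--$k$ normalizations of the identities relating $(\mathrm{pr}_1)^*,(\mathrm{pr}_2)^*,(\mathrm{pr}_1)_*,(\mathrm{pr}_2)_*$, $U_p$, the $T_p$--type operators and the diamonds on the sheaf $\mathscr{V}^{2r}_\et$ --- the $p$--power discrepancies coming precisely from the degree--$p$ isogeny underlying $\mathrm{pr}_2$ acting on $\TSym^{2r}$ of the relative $H^1$ --- and then verify that no accidental cancellation occurs, including in the degenerate case $\upalpha=\upbeta$, where $U_p$ need not be semisimple on the $p$--old block and the $\upalpha$--eigenspace must be treated directly. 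These identities are the quaternionic analogues of well--known relations on modular curves; they follow from the moduli descriptions in \S\ref{subsec.Moduli} and \S\ref{subsec.HeckeOperators} together with the Jacquet--Langlands correspondence of Proposition~\ref{prop.JacquetLanglands}, but transcribing them on relative Chow motives and their étale realizations is where the care is needed.
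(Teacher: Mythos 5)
Your reduction step is fine as far as it goes: $(\mathrm{pr}_\upalpha)_*$ is Galois-equivariant, and $V_{\mathcal{F}}$, $V_{\mathcal{F}_\upalpha}$ are absolutely irreducible two-dimensional representations with the same Frobenius traces away from $Np$, so Schur forces the map to be zero or an isomorphism. But the entire content of the proposition is the non-vanishing, and that is exactly the step you do not carry out. You introduce the ``going up'' map $\iota_\upalpha=(\mathrm{pr}_1)^*-\upbeta\,(\mathrm{pr}_2)^*$ and assert without proof that it maps $V_{\mathcal{F}}$ isomorphically onto the $U_p=\upalpha$ eigenspace; that assertion is of exactly the same nature and difficulty as the statement being proved, so as written the argument is circular unless one of the two maps is actually computed. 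You then write that ``carrying this through \dots yields that $\mu$ is a nonzero element'' without carrying it through; the scalar is not automatically nonzero --- when the computation is done a factor of the shape $\bigl(1-\tfrac{\upbeta}{\upalpha}\bigr)$ (together with a companion factor involving an extra power of $p$) appears, so non-vanishing genuinely uses $\upalpha\neq\upbeta$, which holds in this paper only because of the hypotheses on the Coleman family in \S\ref{subsec.ColemanFamilies}; your treatment merely flags this case as a worry. Finally, on the quaternionic side there are no $q$-expansions, so the ``standard relations'' $U_p\circ(\mathrm{pr}_2)^*=(\mathrm{pr}_1)^*$ etc.\ on $H^1_{\et}(\widetilde{X}_m\otimes_\Q\overline{\Q},\mathscr{V}^{2r}_{\et})$ that your computation relies on would themselves have to be established from the moduli description of \S\ref{subsec.HeckeOperators}; you acknowledge this under ``main obstacle'' but leave it open.

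For comparison, the paper's own proof is a one-line reduction: since $V_{\mathcal{F}}\cong V_f$ and $V_{\mathcal{F}_\upalpha}\cong V_{f_\upalpha}$ compatibly with the degeneration maps via the Jacquet--Langlands correspondence (cf.\ \S\ref{subsec.GaloisRepresentations}), the statement follows from its elliptic counterpart, \cite[Proposition 7.3.1]{KingsLoefflerZerbes.ERL}, where precisely the eigenvalue computation you sketch is carried out. So your outline is in effect a plan for re-proving the elliptic input in the quaternionic setting; to turn it into a proof you would either have to complete that computation (pinning down the normalizations of $(\mathrm{pr}_i)_*\circ(\mathrm{pr}_j)^*$ on $\TSym^{2r}$ coefficients and treating $\upalpha=\upbeta$, or excluding it by hypothesis), or, as the paper does, simply transport the known elliptic statement through Jacquet--Langlands.
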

\begin{proof}
One can use the Jacquet--Langlands correspondence to reduce it to the elliptic version of this statement, which is \cite[Proposition 7.3.1]{KingsLoefflerZerbes.ERL}.
\end{proof}

\subsection{Eigencurves}
Recall that the weight space $\mathscr{W}\defeq\Hom_{\mathrm{cont},\,\Z_p}(\Z_p^\times,\Z_p^\times)$ is the rigid analytic variety associated to the Iwasawa algebra $\Lambda\defeq\Z_p[\![\Z_p^\times]\!]$. For $N\in\Z_{\ge 2}$, we also define the level $N$ Iwasawa algebra to be $\Lambda_N\defeq\Z_p[\![(\Z/N\Z)^\times\times\Z_p^\times]\!]$, whose associated level $N$ weight space is $\mathscr{W}_N\defeq\Hom_{\mathrm{cont},\,\Z_p}((\Z/N\Z)^\times\times\Z_p^\times,\Z_p^\times)$, and there is a natural projection $\mathscr{W}_N\to\mathscr{W}$. We embed $\Z$ into those weight spaces by sending $k$ into the character $x\mapsto x^{k-2}$. This normalization is so that $k=2r+2$ maps into $2r$.

Let $\mathcal{E}(N)$ denote the Coleman--Mazur eigencurve of tame level $N$ defined in \cite{ColemanMazur}, a rigid analytic variety parametrizing finite slope overconvergent $p$-adic eigenforms of tame level $N$, and in particular, the $p$-stabilized eigenforms $f\in S_k(\Gamma_1(Np),\C)^{N^{-}\text{-new}}$ from Proposition \ref{prop.JacquetLanglands}. There exists a projection of the eigencurve $\mathcal{E}(N)$ into the level $N$ weight space $\mathscr{W}_N$.

Using Buzzard's eigenvariety machine from \cite{Buzzard.Eigen}, Brasca (in \cite{Brasca.Eigenvarieties}) has constructed the quaternionic analogue of the eigencurve, parametrizing cuspidal quaternionic modular forms over certain PEL Shimura varieties. Since the Shimura curves we consider here are PEL and compact, there are no cusps exist and therefore any modular form defined over them is automatically cuspidal, so Brasca's construction apply. We denote by $\mathscr{E}(N^{+})$ the Brasca--Buzzard eigencurve, whose $L$-rational points parametrize modular forms of tame level $N^+$. A general result by Chenevier, when adapted to our context, implies a $p$-adic Jacquet--Langlands correspondence between the elliptic and quaternionic eigencurves:

\begin{proposition}\label{prop.padicJacquetLanglands}
The new at $N^{-}$ locus of the Coleman--Mazur eigencurve, denoted $\mathcal{E}(N)^{N^{-}\text{-new}}$, and the Brasca--Buzzard eigencurve $\mathscr{E}(N^+)$ are isomorphic as rigid analytic varieties; see \cite[Théorème 3]{Chenevier}.
\end{proposition}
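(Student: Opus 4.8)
The plan is to realize both eigencurves as outputs of Buzzard's eigenvariety machine \cite{Buzzard.Eigen} attached to the \emph{same} abstract Hecke algebra and the \emph{same} compact operator, and then to apply the uniqueness (interpolation) theorem for eigenvarieties in order to promote the classical Jacquet--Langlands correspondence of Proposition \ref{prop.JacquetLanglands} to an isomorphism of rigid analytic varieties; this is precisely the content of \cite[Th\'eor\`eme 3]{Chenevier} once transported to our setting.

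First I would pin down the two eigenvariety data. On the elliptic side, $\mathcal{E}(N)$ is the eigencurve of finite slope overconvergent $p$-adic cuspforms of tame level $N$; the condition of being new at every prime dividing $N^-$ cuts out a direct summand of the relevant module of overconvergent forms, stable under the Hecke algebra $\mathcal{T}$ generated by the $T_\ell$ for $\ell\nmid N$ and by $U_p$, and this summand carries its own eigencurve datum whose classical points are exactly the $p$-stabilized forms $f\in S_k(\Gamma_1(Np),\C)^{N^-\text{-new}}$ appearing in Proposition \ref{prop.JacquetLanglands}. On the quaternionic side, Brasca's construction \cite{Brasca.Eigenvarieties} runs Buzzard's machine on the module of overconvergent quaternionic modular forms of tame level $N^+$ over the tower $\{\widetilde{X}_m\}_{m}$, using the same Hecke algebra $\mathcal{T}$ (note that $\ell\nmid N$ if and only if $\ell\nmid N^+N^-$) and the same compact operator $U_p$; every such form is automatically cuspidal because the Shimura curves are proper.

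Next I would match classical points. The classical points of $\mathscr{E}(N^+)$ are the $p$-stabilized quaternionic eigenforms in $M_k(N^+,p,\C_p)$, and Proposition \ref{prop.JacquetLanglands} identifies these via $\mathrm{JL}$ with the $N^-$-new classical $p$-stabilized eigenforms of level $\Gamma_1(Np)$, preserving the weight, the $U_p$-eigenvalue and every $T_\ell$-eigenvalue for $\ell\nmid N$, that is, the whole system of $\mathcal{T}$-eigenvalues. This gives a weight- and Hecke-eigenvalue-preserving bijection between the classical points of $\mathcal{E}(N)$ and those of $\mathscr{E}(N^+)$. I would then invoke the interpolation theorem underlying Buzzard's machine (\cite[Th\'eor\`eme 3]{Chenevier}, compare \cite{Buzzard.Eigen}): an eigenvariety for $\mathcal{T}$ over the weight space is determined up to unique isomorphism, compatible with the weight map, by its set of classical points together with their eigensystems, provided these classical points are Zariski-dense and accumulate everywhere --- properties that hold for both eigencurves by Coleman's classicality theorem. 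Hence the bijection extends uniquely to the desired isomorphism $\mathcal{E}(N)\stackrel{\sim}{\longrightarrow}\mathscr{E}(N^+)$ of rigid analytic varieties.

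The step I expect to be the main obstacle is the first one: checking that Brasca's construction and the Coleman--Mazur construction really give the \emph{same} eigenvariety datum, and in particular that the coarse Hecke data ``spherical away from $N$ together with $U_p$'' already separates classical points. The latter rests on strong multiplicity one for $\GL_2/\Q$ together with the compatibility of $U_p$-slopes under $\mathrm{JL}$ (Proposition \ref{prop.JacquetLanglands}); the former requires comparing the coherent-cohomology model for $\mathscr{E}(N^+)$ used in \cite{Brasca.Eigenvarieties} with the overconvergent description underlying $\mathcal{E}(N)$. Both sides being of $\GL_2$-type this comparison is essentially formal, but it is exactly the point at which Chenevier's result \cite{Chenevier}, originally phrased for other reductive groups, must be ``adapted to our context'' of an indefinite quaternion algebra over $\Q$ and its Shimura curves.
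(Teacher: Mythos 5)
Your proposal takes essentially the same route as the paper: the paper offers no independent argument but simply invokes Chenevier's comparison/interpolation theorem for eigenvarieties, which is exactly the machinery you spell out (Buzzard's machine on both sides, matching of classical points via the classical Jacquet--Langlands correspondence of Proposition \ref{prop.JacquetLanglands}, classicality and Zariski density, then uniqueness of the eigenvariety). The only point worth flagging is one you already sense: what one really gets is an isomorphism of $\mathscr{E}(N^+)$ with the $N^-$-new part of $\mathcal{E}(N)$ (in Chenevier's formulation, a closed immersion onto that part), so the proposition's literal claim about the full eigencurve is loose --- harmless for the paper's purposes, since only a neighborhood of an $N^-$-new point is ever used.
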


\subsection{Coleman families}\label{subsec.ColemanFamilies}
Coleman families of any positive slope, generalizing the ordinary (zero slope) families defined by Hida, were first defined in \cite{Coleman.Banach} for elliptic modular forms. We follow the exposition in \cite[\S 2.2]{NuccioOchiai} for the elliptic case.

Let $f$ be a classical level $N$ new normalized eigenform of even weight $k_0$, such that the roots of its Hecke polynomial, $\upalpha$ and $\upbeta$ are distinct and $v_p(\upalpha)<k_0-1$. These conditions are enough for us to define after $f_\upalpha$ a point in $\mathcal{E}(N)^{N^{-}\text{-new}}$ in which the projection to the weight space is smooth and étale (see \cite[\S 1.1]{Hansen} and \cite[\S 2.3]{Bellaiche}).

For an affinoid $\mathscr{U}\subseteq\mathscr{W}_N$, we write $\mathscr{A}_\mathscr{U}^\circ$ for the ring of power bounded rigid analytic functions defined over $\Q_p$ (see \cite[\S 2.1]{NuccioOchiai}).

\begin{proposition}\label{prop.ColemanFamilies}
Let $f$ be a $p$-stabilized modular form of slope $\lambda$ in the conditions above. Then there exists an affinoid $\mathscr{U}\subseteq\mathscr{W}_N$ and, for each $n\in\Z_{>0}$, $\mathbf{a}_n\in\mathscr{A}_{\mathscr{U}}^\circ\otimes_{\Q_p}L$ rigid analytic functions with coefficients in a finite extension $L$ of $\Q_p$ such that, for each $k\in\mathscr{U}\cap\Z$ with $\lambda<k-1$, the specialization at weight $k$ \[f_k\defeq\sum_{n=1}^\infty\mathbf{a}_n(k) q^n\in L[\![q]\!]\] is a classical normalized
cuspidal eigenform of level $Np$, weight $k$ and slope $\lambda$, and for $k=k_0$, $f_{k_0}=f$; see also \cite[Theorem 4.6.4]{LoefflerZerbes.Rankin}.
\end{proposition}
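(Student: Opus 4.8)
The plan is to deduce this from the geometry of the Coleman--Mazur eigencurve $\mathcal{E}(N)$ and Coleman's classicality theorem for small-slope overconvergent forms, along the lines of \cite{Coleman.Banach}, \cite[\S 2.2]{NuccioOchiai} and \cite[Theorem 4.6.4]{LoefflerZerbes.Rankin}; the statement here is the elliptic one, the quaternionic analogue being obtained afterwards by transport along the $p$-adic Jacquet--Langlands isomorphism of Proposition \ref{prop.padicJacquetLanglands}.

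First I would use the fact recalled just above the statement: since $\upalpha\neq\upbeta$ and $\lambda=v_p(\upalpha)<k_0-1$, the $\upalpha$-stabilization $f=f_\upalpha$ defines a point $x_0\in\mathcal{E}(N)$ at which the weight projection $w\colon\mathcal{E}(N)\to\mathscr{W}_N$ is étale. Hence there are a connected affinoid neighbourhood $\mathscr{U}\subseteq\mathscr{W}_N$ of (the image of) $k_0$ and an affinoid neighbourhood $\mathscr{V}\subseteq\mathcal{E}(N)$ of $x_0$ such that $w$ restricts to an isomorphism $w|_\mathscr{V}\colon\mathscr{V}\stackrel{\sim}{\longrightarrow}\mathscr{U}$. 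Shrinking $\mathscr{U}$ if necessary I may assume the slope function is constant equal to $\lambda$ on $\mathscr{V}$ --- it is locally constant on $\mathcal{E}(N)$, which is admissibly covered by Coleman's ``slope $\le h$'' pieces --- and, after replacing $\Q_p$ by a finite extension $L$ containing the residue field of $x_0$, that $\mathscr{V}$ and $\mathscr{U}$ are defined over $L$.

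Next, recall that Buzzard's eigenvariety machine, as used in \cite{ColemanMazur}, equips $\mathcal{E}(N)$ with a universal system of Hecke eigenvalues: over $\mathscr{V}$ there is a family of overconvergent $p$-adic cuspforms with $q$-expansion $\sum_{n\ge 1}\widetilde{a}_n q^n$, where $\widetilde{a}_1=1$ and each $\widetilde{a}_n$ lies in $\mathcal{O}(\mathscr{V})$ with supremum norm $\le 1$ (the $T_\ell$-eigenvalues of normalized overconvergent eigenforms are $p$-adically integral, and $\lambda\ge 0$ makes the $U_p$-eigenvalue integral too). Transporting along $(w|_\mathscr{V})^{-1}$ yields functions $\mathbf{a}_n\defeq\widetilde{a}_n\circ(w|_\mathscr{V})^{-1}\in\mathscr{A}_\mathscr{U}^\circ\otimes_{\Q_p}L$. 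For $k\in\mathscr{U}\cap\Z$ with $\lambda<k-1$, the point $(w|_\mathscr{V})^{-1}(k)\in\mathscr{V}$ is by construction an overconvergent $p$-adic eigenform of weight $k$, slope $\lambda$ and tame level $N$; since $\lambda<k-1$, Coleman's small-slope classicality theorem forces it to be classical, so $f_k=\sum_{n\ge 1}\mathbf{a}_n(k)\,q^n$ is a classical normalized cuspidal eigenform of level $\Gamma_1(Np)$, weight $k$ and slope $\lambda$. Finally $(w|_\mathscr{V})^{-1}(k_0)=x_0$ by the choice of $\mathscr{V}$, so $f_{k_0}=f$.

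I do not expect a genuine obstacle. The two substantive inputs --- étaleness of $w$ at $x_0$ and Coleman's classicality criterion --- are precisely the results already cited before the statement, and the rest is the bookkeeping of shrinking $\mathscr{U}$ so that $w|_\mathscr{V}$ is an isomorphism with constant slope and of checking that the transported Hecke eigenvalues are power-bounded. The one point needing care is to make $\mathscr{U}$ a small enough closed disc about $k_0$ that it meets the small-slope locus $\{k\in\Z:\lambda<k-1\}$ in the asserted set and that the identification $\mathscr{V}\cong\mathscr{U}$ survives the base change to $L$.
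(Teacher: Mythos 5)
Your sketch is correct and is essentially the standard argument underlying the results the paper cites in place of a proof of Proposition \ref{prop.ColemanFamilies} (étaleness of the weight map at the non-critical point attached to $f_\upalpha$, transport of the universal Hecke eigenvalues through the local isomorphism $\mathscr{V}\cong\mathscr{U}$, and Coleman's small-slope classicality theorem, as packaged in \cite[Theorem 4.6.4]{LoefflerZerbes.Rankin} and \cite[\S 2.2]{NuccioOchiai}); the paper itself gives no independent proof. The only point you state a bit loosely is the power-boundedness of the $\mathbf{a}_n$: the clean justification is that the Hecke operators have operator norm $\le 1$ on the relevant Banach modules, so their eigenvalues are integral at \emph{every} point of the (reduced) affinoid $\mathscr{V}$, and then the maximum modulus principle gives $\|\widetilde{a}_n\|_{\sup}\le 1$ --- integrality at classical points alone would not suffice.
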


The family of functions $\{\mathbf{a}_n\}_n$ is the \textit{(elliptic) Coleman family} defined over $\mathscr{U}$ passing through $f$. In other words, a Coleman family is an affinoid $\{f_k\}_{k\in\mathscr{U}}$ in the Coleman--Mazur eigencurve $\mathcal{E}(N)$ defined over $\mathscr{U}$. Let $\Lambda_\mathscr{U}$ be the Iwasawa algebra associated to $\mathscr{U}$, over which $\mathscr{A}_\mathscr{U}^\circ$ is finite and flat module. The (at first) formal series \[\mathscr{f}\defeq\sum_{n=1}^\infty\mathbf{a}_n q^n\in \mathscr{A}_{\mathscr{U}}^\circ[\![q]\!]\] is a well defined element in $V_p(N^+,\mathscr{A}_{\mathscr{U}}^\circ)\defeq V_p(N^+,L)\widehat{\otimes}_{\Lambda_{\mathscr{U}}}\mathscr{A}_{\mathscr{U}}^\circ$.

Let $f$ be a classical $p$-stabilized eigenform as above, $\mathcal{F}\defeq\mathrm{JL}(f)$ be the correspondent $p$-stabilized quaternionic modular form, and $\mathscr{f}$ be the Coleman family passing through $f$ defined over some affinoid $\mathscr{U}\subseteq\mathscr{W}$ (after projection from $\mathscr{W}_N$). Via the $p$-adic Jacquet--Langlands correspondence (Proposition \ref{prop.padicJacquetLanglands}), $\mathscr{f}$, as an open neighborhood of $f$ in $\mathcal{E}(N)^{N^{-}\text{-new}}$ maps into an open neighborhood $\mathscr{F}$ of $\mathcal{F}$ in $\mathscr{E}(N^+)$, which we call the \textit{(quaternionic) Coleman family} passing through $\mathcal{F}$ defined over $\mathscr{U}$. As an abuse of notation, we write $\mathscr{F}=\mathrm{JL}(\mathscr{f})$ to emphasize the relation between the elliptic and quaternionic Coleman families. As a consequence of Propositions \ref{prop.padicJacquetLanglands} and \ref{prop.ColemanFamilies}:

\begin{corollary}\label{cor.QuaternionicColemanFamilies}
Let $\mathscr{F}=\mathrm{JL}(\mathscr{f})$ be the Coleman family passing through $\mathcal{F}$ over $\mathscr{U}\subseteq\mathscr{W}$. For each $k\in\mathscr{U}\cap\Z$ with $\lambda<k-1$, the specialization at weight $k$ denoted $\mathcal{F}_k$ is the quaternionic modular form $\mathrm{JL}(f_k)$, where $f_k$ is the specialization at weight $k$ of $\mathscr{f}$. At weight $k=k_0$, the specialization is $\mathcal{F}$ itself.
\end{corollary}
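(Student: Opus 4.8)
\section*{Proof proposal}

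The plan is to deduce the corollary formally from Propositions~\ref{prop.ColemanFamilies} and~\ref{prop.padicJacquetLanglands} together with the rigidity of eigenvarieties, so that no new geometric construction is needed. First I would recall that, by Proposition~\ref{prop.ColemanFamilies} and the discussion following it, the elliptic Coleman family $\mathscr{f}$ \emph{is} a connected affinoid neighbourhood $\mathscr{V}$ of $f=f_{k_0}$ in $\mathcal{E}(N)$, finite and flat over $\mathscr{U}$ via the weight map; for each $k\in\mathscr{U}\cap\Z$ with $\lambda<k-1$ it has a specialization $f_k\in\mathcal{E}(N)$ lying over weight $k$, which is a \emph{classical} normalized cuspidal eigenform of level $Np$. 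Moreover each $f_k$ is new at every prime dividing $N^-$: newness at a squarefree set of primes is detected by the local components of the associated automorphic representation being special, equivalently by Hecke relations among the $U_\ell$ for $\ell\mid N^-$, a condition that is locally constant on $\mathcal{E}(N)$ and holds at $f$ by hypothesis. Hence $\mathrm{JL}(f_k)\in M_k(N^+,p,\C_p)$ is well defined by Proposition~\ref{prop.JacquetLanglands}.

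Next I would use Proposition~\ref{prop.padicJacquetLanglands}: Chenevier's isomorphism $\Theta\colon\mathcal{E}(N)\to\mathscr{E}(N^+)$ is an isomorphism of eigenvarieties, so it commutes with the projections to the weight space and intertwines the Hecke operators $T_\ell$ ($\ell\nmid Np$) and $U_p$. Consequently $\mathscr{F}=\Theta(\mathscr{V})$ is finite flat over $\mathscr{U}$, its specialization $\mathcal{F}_k\defeq\Theta(f_k)$ lies over weight $k$, and $\mathcal{F}_k$ and $f_k$ carry the same system of $T_\ell$-eigenvalues ($\ell\nmid Np$), the same $U_p$-eigenvalue, and the same nebentype $\psi$. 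By Proposition~\ref{prop.JacquetLanglands} the classical quaternionic form $\mathrm{JL}(f_k)$ has exactly this eigensystem and nebentype as well; since a classical quaternionic eigenform of tame level $N^+$ and weight $k$ is determined by its eigensystem away from $N$ together with its $U_p$-eigenvalue (strong multiplicity one, via $\mathrm{JL}$ and multiplicity one for $\mathrm{GL}_2/\Q$), we conclude $\mathcal{F}_k=\mathrm{JL}(f_k)$. Taking $k=k_0$ and using $f_{k_0}=f$ from Proposition~\ref{prop.ColemanFamilies} yields $\mathcal{F}_{k_0}=\mathrm{JL}(f)=\mathcal{F}$.

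The only point that is not pure bookkeeping is the claim that $\Theta$ restricts on classical points to the classical Jacquet--Langlands map $\mathrm{JL}$, rather than to some other identification of the underlying rigid spaces. Here I would invoke the uniqueness built into Buzzard's eigenvariety machine \cite{Buzzard.Eigen} as used by Chenevier \cite{Chenevier}: an eigenvariety is determined, up to a unique isomorphism compatible with the Hecke action and the weight map, by the eigensystems attached to its Zariski-dense set of classical points, and both $\mathcal{E}(N)$ (restricted to the $N^-$-new locus) and $\mathscr{E}(N^+)$ interpolate the same family of classical eigensystems, matched precisely by $\mathrm{JL}$ through Proposition~\ref{prop.JacquetLanglands}. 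Thus $\Theta$ must agree with $\mathrm{JL}$ at every classical point, which is exactly what the specialization statement asserts. I expect the actual write-up to spend most of its length pinning down the $N^-$-new locus and citing this uniqueness statement; everything else is chasing weight maps and specialization morphisms.
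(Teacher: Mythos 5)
Your proposal is correct and takes essentially the same route as the paper, which in fact gives no separate proof: Corollary \ref{cor.QuaternionicColemanFamilies} is presented there as an immediate consequence of Propositions \ref{prop.padicJacquetLanglands} and \ref{prop.ColemanFamilies}, since $\mathscr{F}$ is by definition the image of $\mathscr{f}$ under Chenevier's eigencurve isomorphism, which matches classical points via $\mathrm{JL}$. Your additional care about the $N^{-}$-new locus, the compatibility of the isomorphism with $\mathrm{JL}$ at classical points, and multiplicity one simply makes explicit what the paper leaves implicit in its citation of Chenevier.
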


\section{Big generalized Heegner classes}\label{sec.BigHeegnerClasses}
We define big generalized Heegner classes associated to Coleman families of quaternionic modular forms through the $p$-adic interpolation of generalized Heegner classes associated to quaternionic modular forms, following the approach from \cite[\S 4]{JetchevLoefflerZerbes}. As seen in \S\ref{subsec.GysinMap}, the basis vectors we constructed after CM points correspond under the Gysin map to the generalized Heegner classes (Proposition \ref{prop.Gysin}), so the strategy is to interpolate $p$-adically the basis vectors and the Gysin map.

Let $k_0=2r_0+2\in\Z$, which can be found in $\mathscr{W}$ as the character $z\mapsto z^{2r_0}$. For an integer $s\ge 0$, denote by $\mathscr{W}^{(s)}$ the locus of the $s$-analytic characters, which are the $\kappa\in\mathscr{W}$ such that \[v_p(\kappa(1+p)-1)>(p^{s-1}(p-1))^{-1}.\]
Let $\mathscr{U}$ be an open neighborhood of $z_0$ in $\mathscr{W}^{(s)}$ defined over a $p$-adic field $L$ in which $K$ embeds by $\sigma\colon K\hookrightarrow L$, $\Lambda_\mathscr{U}\defeq \mathcal{O}_L[\![u]\!]$ be its Iwasawa algebra and let $\kappa_{\mathscr{U}}\colon\Gamma\hookrightarrow\Lambda_{\mathscr{U}}^\times$ be its universal character. For any character $\sigma\colon\mathcal{O}_L^\times\to\Gamma$ we write $\sigma^{\kappa_{\mathscr{U}}}\defeq\kappa_{\mathscr{U}}\circ\sigma$, $\sigma^{k}\defeq k\circ\sigma$ for any $k\in\Z$ and $\sigma^{\pm\kappa_\mathscr{U}+i_1}\bar{\sigma}^{i_2}\defeq (\sigma^{\kappa_{\mathscr{U}}})^{\pm 1}\sigma^{i_1}\bar{\sigma}^{i_2}$ for any two integers $i_1$ and $i_2$.

Recall the fixed new at $p$ elliptic eigenform $f_0^\sharp$ of weight $k_0$ and level $\Gamma_1(N)$ from \S\ref{subsec.GeneralizedHeegnerHypothesis}, let $f_0$ be the $p$-stabilization of $f_0^\sharp$ with respect to some root $\upalpha$ of its Hecke polynomial, and let $\mathcal{F}_0=\mathrm{JL}(f_0)$, a $p$-stabilized quaternionic modular form of signature $(k_0,\psi_0)$ and level $N^{+}p$. Let as in \S\ref{subsec.ColemanFamilies} $\mathscr{F}$ be the fixed Coleman family passing through $\mathcal{F}_0$ defined over $L$ and let $\mathscr{U}$ be an open neighborhood of $k_0$ small enough so that $\mathscr{F}$ specializes at every $k\in\mathscr{U}\cap\Z$ to a $p$-stabilized modular form.

\begin{remark}\label{rem.InvertPrimes}
In all étale cohomology groups in this section, we always invert (but don't write) all primes dividing $Np$ in the respective fields of definition.
\end{remark}

\subsection{Classes associated to quaternionic modular forms}\label{subsec.ClassesModularForms}

Let $\mathcal{F}\in M_k(N^+p^m,\mathcal{O}_L)$ be a quaternionic newform of signature $(k,\psi)$ over $\widetilde{X}_m$. From \S\ref{subsec.GaloisRepresentations}, the dual Galois representation $V_\mathcal{F}^*$ of $\mathcal{F}$ is a quotient of $H^1_{\et}({X}_m\otimes_\Q\overline{\Q},\mathscr{V}^{2r}_{\et}(1))$, so we have a projection
\begin{equation}\label{eq.ProjectionMF1}
\mathrm{pr}^{[\mathcal{F},j]}\colon H^1\left(F_{cp^n},H^1_{\et}({X}_m\otimes_\Q\overline{\Q},\mathscr{V}^{2r}_{\et}(1))\otimes\sigma^{2r-j}_{\et}\bar{\sigma}_{\et}^j\right)\longepi\\ H^1\left(F_{cp^n},V_\mathcal{F}^*\otimes\sigma_{\et}^{2r-j}\bar\sigma^j_{\et}\right).
\end{equation}

\begin{definition}
The class $z_{cp^n,m}^{[\mathcal{F},j]}\defeq\mathrm{pr}^{[\mathcal{F},j]}\left(z^{[k,j]}_{cp^n,m}\right)$ is the $j$-component of the \emph{generalized Heegner class} associated to $\mathcal{F}$ and with $0\leq j\leq 2r$.
\end{definition}

\begin{lemma}\label{lem.Shimura}
Let $\chi$ be a Hecke character of infinity type $(2r-j,j)$ and finite type $(cp^n,\mathfrak{N}^+,\psi)$. Then the generalized Heegner class $z_{cp^n,m}^{[\mathcal{F},j]}$ belongs to the $\Gal(F_{cp^n}/H_{cp^n})$-invariant subspace of \[H^1(F_{cp^n},V^\ast_\mathcal{F}\otimes\sigma_{\et}^{2r-j}\sigma^j_{\et})\cong H^1(F_{cp^n},V_\mathcal{F}^\ast\otimes\chi).\]
\end{lemma}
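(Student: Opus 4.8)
The statement comprises two claims --- the isomorphism of cohomology groups and the $\Gal(F_{cp^n}/H_{cp^n})$-invariance of the class --- and the plan is to reduce both to the theory of complex multiplication applied to the zero-dimensional Shimura variety $S_{cp^n}$, whence the name of the lemma.

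For the isomorphism I would first check that $\sigma_{\et}^{2r-j}\bar\sigma_{\et}^j$ and $\chi$ have the same restriction to $G_{F_{cp^n}}$: this yields an isomorphism $V_\mathcal{F}^\ast\otimes\sigma_{\et}^{2r-j}\bar\sigma_{\et}^j\cong V_\mathcal{F}^\ast\otimes\chi$ of $G_{F_{cp^n}}$-modules, hence of the two $H^1$'s, and it equips the first group with the $\Gal(F_{cp^n}/H_{cp^n})$-action transported from the second, which carries a natural such action because $V_\mathcal{F}^\ast\otimes\chi$ is a $G_{H_{cp^n}}$-module. Both characters factor through $\Gal(K^{\mathrm{ab}}/F_{cp^n})$ --- for $\sigma_{\et},\bar\sigma_{\et}$ by their construction on $\Gal(K^{\mathrm{ab}}/F_{\mathfrak N^+})$ together with $F_{\mathfrak N^+}\subseteq F_{cp^n}$, and for $\chi$ because it is a Hecke character of $K$ --- so by the geometrically normalized reciprocity map it suffices to compare them on $\rec_K(U_{\mathfrak N^+,cp^n})$, where $U_{\mathfrak N^+,cp^n}=\{x\in\widehat{\mathcal O}_{cp^n}^\times: x\equiv1\bmod\mathfrak N^+\}$. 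On such an idele $x$, which has trivial archimedean and trivial $\mathfrak N^+$-component, the value of $\sigma_{\et}^{2r-j}\bar\sigma_{\et}^j$ is read off directly from the definition of $\sigma_{\et},\bar\sigma_{\et}$, while that of $\chi$ is computed from its $p$-adic avatar using the infinity type $(2r-j,j)$ and the finite type $(cp^n,\mathfrak N^+,\psi)$ hypotheses; matching the two is the comparison carried out, in the relevant normalization, in \cite{Brooks} and \cite{CastellaHsieh}.

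For the invariance I would invoke Proposition \ref{prop.Gysin}, which identifies $z_{cp^n,m}^{[\mathcal F,j]}$ with $\mathrm{pr}^{[\mathcal F,j]}\bigl(\delta_{cp^n,\ast}(e_{cp^n}^{[k,j]})\bigr)$; since the Gysin map and $\mathrm{pr}^{[\mathcal F,j]}$ are Galois-equivariant, it is enough to analyze the $\Gal(F_{cp^n}/H_{cp^n})$-action on the basis vector $e_{cp^n}^{[k,j]}\in H^0_{\et}\bigl(S_{cp^n},\delta_{cp^n}^\ast\mathscr M_{\et}^{(2r-j,j)}\bigr)$. Here $S_{cp^n}$ is the canonical model over $K$ of the Shimura variety attached to $\boldH=\Res_{K/\Q}(\G_m)$ of level $U_{\mathfrak N^+,cp^n}$, whose geometric points form a $\Gal(F_{cp^n}/K)$-torsor on which Galois acts by translation (Shimura's reciprocity law for the canonical model), while $\delta_{cp^n}^\ast\mathscr M_{\et}^{(2r-j,j)}$ splits as the \'etale realization of $h^{(2r-j,j)}(A)$, on which $x\in\mathcal O_K$ acts by $x^{2r-j}\bar x^j$, tensored with the stalk of $\mathscr V^{2r}_{\et}$ at the Heegner point $\vartheta_{cp^n}$. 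Using the compatibility of the canonical construction functor and its \'etale realization with the Galois action on canonical models (\cite{Ancona}, \cite[Theorem 9.7]{Torzewski}) --- the CM character on the $h^{(2r-j,j)}(A)$-factor together with the transformation of the $U_1(N^+)$-level datum by $\psi$ and of the $p$-power trivialization datum by the $p$-part of $\chi$ --- one finds that $\tau\in\Gal(F_{cp^n}/H_{cp^n})$ scales $e_{cp^n}^{[k,j]}$ by a factor which, by the first part, is precisely $\chi(\tau)$; this cancels against the $\chi$-twist, so $z_{cp^n,m}^{[\mathcal F,j]}$ lies in the $\Gal(F_{cp^n}/H_{cp^n})$-invariant subspace, as claimed.

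The main obstacle is not conceptual but a matter of consistency of normalizations: the geometric normalization of $\rec_K$, the precise recipe relating a Hecke character of infinity type $(2r-j,j)$ and finite type $(cp^n,\mathfrak N^+,\psi)$ to the $p$-adic avatar built from $\sigma$ (including the $\psi^{-1}$ convention for the central character), and the explicit Galois action on $e_{cp^n}^{[k,j]}$ dictated by $\xi_{cp^n}$ and its factorization $\xi_{cp^n}=b_{cp^n}u_{cp^n}$ from \S\ref{subsec.CMpoints} via Shimura reciprocity on $S_{cp^n}$. Tracking the inverses and recording which component of $\widehat{\mathcal O}_{cp^n}^\times$ each character ``sees'' is where the care is needed; the underlying argument is the quaternionic counterpart of the $\GL_2$ statements in \cite{JetchevLoefflerZerbes} and \cite{CastellaHsieh}.
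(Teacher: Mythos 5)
Your proposal is correct and follows essentially the same route as the paper: comparing $\chi$ with $\psi\sigma_{\et}^{2r-j}\bar\sigma_{\et}^j$ via the reciprocity map, and deducing invariance from Shimura reciprocity on $S_{cp^n}$ together with the compatibility of the idelic action (diamond operators on $\widetilde{X}_m$, translation on $S_{cp^n}$) with the embedding $\delta_{cp^n}$ underlying the Gysin map. The paper's proof is merely terser, citing \cite[Proposition 3.5.2]{JetchevLoefflerZerbes} and \cite[Theorem 9.6]{Shimura} and noting that the class lies in the finite-dimensional space of classes unramified outside $Np$, rather than spelling out the action on the basis vector $e_{cp^n}^{[k,j]}$ as you do.
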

\begin{proof}
The proof is similar to \cite[Proposition 3.5.2]{JetchevLoefflerZerbes}. The group $\widehat{\mathcal{O}}_{cp^n}^\times/U_{\mathfrak{N}^+,cp^n}\cong(\Z/N^+\Z)^\times$ acts naturally on $S_{cp^n}$ and by diamond operators on $\widetilde{X}_m$, and both actions are compatible with the embedding $\delta_{cp^n}^*\colon S_{cp^n}\hookrightarrow \widetilde{X}_m\times S_{cp^n}$ induced by the embedding from \S\ref{subsec.GysinMap}. Since $\chi$ has conductor prime to $N^+$, $\chi$ restricts to the character $\psi\sigma^{2r-j}_{\et}\bar\sigma^{j}_{\et}$ on $\Gal(F_{cp^n}/H_{cp^n})$, thus extending $\sigma^{2r-j}\bar\sigma^{j}$ to $\Gal(K^\mathrm{ab}/ H_{cp^n})$ after twisting the Galois action by $\psi$. Since $z_{cp^n,m}^{[\mathcal{F},j]}$ lies in the 
finite dimensional subspace of classes which are unramified outside $Np$, the result then follows from Shimura reciprocity law (\cite[Theorem 9.6]{Shimura}).
\end{proof}
 
Via the inflation-restriction exact sequence and the irreducibility of $V_\mathcal{F}$, there is an isomorphism 
\begin{equation}\label{eq.InflationRestriction}
\left(H^1(F_{cp^n},V_\mathcal{F}^\dagger\otimes\chi)\right)^{\Gal(F_{cp^n}/H_{cp^n})}\cong H^1(H_{cp^n},V_\mathcal{F}^\dagger\otimes\chi).
\end{equation}
\begin{definition}
Let $\chi$ be a Hecke character of infinity type $(2r-j,j)$ with $0\le j\le 2r$. The \emph{$\chi$-component} $z_{cp^n,m}^{[\mathcal{F},\chi]}$
of the generalized Heegner class is the image of $z_{cp^n,m}^{[\mathcal{F},j]}$ via the isomorphism \eqref{eq.InflationRestriction}.
\end{definition}

Let $\mathcal{F}=\mathrm{JL}(f)$ be the quaternionic lift of a new at $p$ elliptic eigenform $f$ and $\upalpha, \upbeta$ be the roots of its Hecke polynomial; let $\mathcal{F}_\upalpha$ the $p$-stabilization of $\mathcal{F}$ with respect to $\upalpha$. Proposition \ref{prop.ProjStabIsom} gives an isomorphism $(\mathrm{pr}_\upalpha)_*\colon V_{\mathcal{F}_\upalpha}\stackrel{\sim}{\to}V_{\mathcal{F}}$.

\begin{proposition}\label{prop.ProjStab}
The map $(\mathrm{pr}_\upalpha)_*$ induces an isomorphism \[H^1\left(H_{cp^n},V_{\mathcal{F}_\upalpha}^\ast\otimes\chi\right)\stackrel{\sim}{\longrightarrow} H^1\left(H_{cp^n},V_\mathcal{F}^\ast\otimes\chi\right)\] under which $z_{cp^n,1}^{[\mathcal{F}_\upalpha,\chi]}$ maps into $E_p(\mathcal{F},\chi)\cdot z_{cp^n,0}^{[\mathcal{F},\chi]}$, where the \textit{Euler factor} $E_p(\mathcal{F},\chi)$ is defined by \[E_p(\mathcal{F},\chi)\defeq\begin{cases}
\left(1-\frac{\chi(\mathfrak{p})}{\upalpha} \right)\medskip\left(1-\frac{\chi(\bar{\mathfrak{p}})}{\upalpha}\right), & \text{ if $p$ is split;}\\\medskip
1-\frac{\chi(\mathfrak{p})}{\upalpha^2}, & \text{ if $p$ is inert;}\\
1-\frac{\chi(\mathfrak{p})}{\upalpha}, &\text{ if $p$ is ramified.}
\end{cases}\]
\end{proposition}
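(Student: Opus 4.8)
The plan is to reduce the claimed identity to the analogous statement for generalized Heegner \emph{cycles} on the Kuga--Sato varieties $W_{k,1}$ and $W_{k,0}$, where the degeneration maps $\mathrm{pr}_1,\mathrm{pr}_2$ act on the underlying cycles, and then transport the computation through the chain of maps $\Phi_m^{[k,j]}$, $\mathrm{pr}^{[\mathcal{F},j]}$ and the isomorphism \eqref{eq.InflationRestriction}. First I would note that the isomorphism $H^1(H_{cp^n},V_{\mathcal{F}_\upalpha}^\ast\otimes\chi)\xrightarrow{\sim}H^1(H_{cp^n},V_{\mathcal{F}}^\ast\otimes\chi)$ is simply the one induced by the dual of $(\mathrm{pr}_\upalpha)_\ast$ from Proposition \ref{prop.ProjStabIsom} (using that $V^\ast$ is functorial in $V$ via the cycle-class/Poincar\'e-duality pairing), so that the content is purely the behavior of the Heegner classes. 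Since $(\mathrm{pr}_\upalpha)_\ast=(\mathrm{pr}_1)_\ast-\upalpha^{-1}(\mathrm{pr}_2)_\ast$, it suffices to compute the effect of each $(\mathrm{pr}_i)_\ast$ on $z^{[\mathcal{F},\chi]}_{cp^n,1}$.

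Next I would work at the level of CM points and basis vectors, where Proposition \ref{prop.Gysin} identifies $z^{[k,j]}_{cp^n,1}$ with the Gysin pushforward of the basis vector $e^{[k,j]}_{cp^n}$ attached to the Heegner point $\widetilde x_{cp^n,1}(1)=[(\iota_K^B,\xi_{cp^n})]\in\widetilde X_1$. The degeneration map $\mathrm{pr}_1\colon\widetilde X_1\to\widetilde X_0$ forgets the $\Gamma_1(p)$-structure in a way that, on the CM point, corresponds to the identity on the underlying QM abelian surface $A_{\vartheta_{cp^n}}$; whereas $\mathrm{pr}_2$ sends it to the quotient by the canonical subgroup, i.e.\ to the Heegner point of conductor $cp^{n+1}$ pushed down — concretely, the local component at $p$ of $\xi_{cp^n}$ gets multiplied by $\smallmat{p}{0}{0}{1}$, which matches $\xi_{cp^{n+1}}$ up to an element of $U_0$. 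Tracking this through the moduli description in Remark \ref{rem.Factorization}, $(\mathrm{pr}_1)_\ast$ sends $z^{[\mathcal{F},\chi]}_{cp^n,1}$ to $z^{[\mathcal{F},\chi]}_{cp^n,0}$ (up to a power of $U_p$ absorbed into the relation between $\widetilde X_1$ and $\widetilde X_0$ classes) and $(\mathrm{pr}_2)_\ast$ produces, after applying the norm-compatibility/Euler-system relation, a combination of $z^{[\mathcal{F},\chi]}_{cp^{n}, 0}$ and $z^{[\mathcal{F},\chi]}_{cp^{n\pm1},0}$ weighted by the Hecke eigenvalue and by the local Galois action of $\chi$ at $\mathfrak p$ (and $\bar{\mathfrak p}$). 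Assembling $(\mathrm{pr}_1)_\ast-\upalpha^{-1}(\mathrm{pr}_2)_\ast$ and using that on $V_{\mathcal F}$ one has the relation $U_p^2 - a_p(f)U_p + \psi(p)p^{k-1}=0$ with $a_p(f)=\upalpha+\upbeta$, $\upalpha\upbeta=\psi(p)p^{k-1}$, the coefficient collapses to exactly $E_p(\mathcal F,\chi)$ in each of the three cases for $p$ in $K$, the split case giving a product of two Euler factors (one for $\mathfrak p$, one for $\bar{\mathfrak p}$), the inert case giving $1-\chi(\mathfrak p)/\upalpha^2$ because $\chi(\mathfrak p)=\chi(p)$ has ``doubled'' weight, and the ramified case giving $1-\chi(\mathfrak p)/\upalpha$.

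The main obstacle I anticipate is the bookkeeping at $p$: one must verify precisely how $\mathrm{pr}_2$ acts on the Heegner point data — i.e.\ that the quotient-by-canonical-subgroup operation on $A_{\vartheta_{cp^n}}$ corresponds to left-translating $\xi^{(p)}_{cp^n}$ by the right matrix — and then match the resulting cycle with the appropriate combination of Heegner cycles of conductors $cp^{n}$ and $cp^{n\pm1}$, which requires care with the optimal-embedding computations of Proposition \ref{prop.HeegnerPoint} (especially distinguishing the split, inert and ramified cases and the value $s'\in\{s,2s\}$ appearing there) and with the action of $\rec_K(\mathfrak p)$ through $\sigma_{\et}^{2r-j}\bar\sigma_{\et}^j$, which is what turns the bare Hecke relation into the $\chi$-twisted Euler factor. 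A convenient shortcut, which I would use to avoid redoing all of this, is to invoke the Jacquet--Langlands correspondence and Proposition \ref{prop.padicJacquetLanglands} to transfer the statement to the elliptic setting, where it is exactly \cite[Proposition 4.1.1 / \S4.2]{JetchevLoefflerZerbes}; the quaternionic proof is then obtained by the same manipulation, the only genuinely new input being that the ``dimension-halving'' idempotent $e$ and the decomposition $eA=E$ make the relevant Galois representation and CM-character data identical to the elliptic case on the nose.
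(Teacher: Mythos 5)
Your outline matches the paper's treatment: the paper gives no independent argument for this proposition, deferring entirely to the explicit calculation of \cite[Theorem 5.7.6]{KingsLoefflerZerbes.ERL} and \cite[Proposition 3.5.5]{JetchevLoefflerZerbes}, which is precisely the $(\mathrm{pr}_1)_*-\upalpha^{-1}(\mathrm{pr}_2)_*$ bookkeeping at the Heegner point, combined with the distribution relations and the action of $\rec_K(\mathfrak{p})$ through $\chi$, that you describe. One caution about your proposed shortcut: the Jacquet--Langlands correspondence identifies Galois representations and Hecke eigenvalues but not the generalized Heegner cycles themselves, so one cannot literally transfer the identity of classes from the elliptic side --- as you acknowledge in your final sentence, the manipulation must be re-run on the Shimura curve, which is also the paper's implicit stance.
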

This is an explicit calculation as in \cite[Theorem 5.7.6]{KingsLoefflerZerbes.ERL}; see also \cite[Proposition 3.5.5]{JetchevLoefflerZerbes}.

\subsection{$p$-adic interpolation of basis vectors}
Let $A_{\mathscr{U},s}^\circ$ be the $\Lambda_\mathscr{U}$-module of continuous bounded $\Lambda_\mathscr{U}$-valued functions on $\mathcal{O}_L$ converging on $p^s\mathcal{O}_{\C_p}$, that is, the power series of the form $\sum_{t\ge 0}a_t\left(\tfrac{Z}{p^s}\right)^t$ with $a_t\in\Lambda_\mathscr{U}$. We call $s$ the \textit{slope} of these functions. Inverting $p$ we get the space $A_{\mathscr{U},s}\defeq A_{\mathscr{U},s}^\circ[1/p]$ of continuous functions with slope $s$.

The monoid $\Sigma(p^m)=\begin{pmatrix}\mathcal{O}_L & \mathcal{O}_L \\ p^m\mathcal{O}_L & \mathcal{O}_L^\times\end{pmatrix}$, generated by $\begin{pmatrix}p & 0 \\ 0 & 1\end{pmatrix}$ and elements of $\GL_2(\mathcal{O}_L)$ that are upper triangular modulo $p^m$, acts on $A_{\mathscr{U},s}$ from the left by \[\gamma\cdot \varphi(Z)=\kappa_\mathscr{U}(bZ+d)\varphi(Z\cdot \gamma),\] where $\gamma=\smallmat abcd$ acts on the variable $Z$ from the \textit{right} as $Z\cdot\gamma=\frac{aZ+c}{bZ+d}$. The space $\Sym^{2r}(L^2)$ of weight $2r$ homogeneous polynomials in two variables $X$ and $Y$ embeds into $A_{\mathscr{U},s}$ by setting $Z$ a fractional linear combination of $X$ and $Y$ and mapping $P(X,Y)\mapsto P(Z,1)$. In view of Lemma \ref{lem.vartheta}, we set $Z=u^{-1}\cdot(X,Y)^\mathsf{T}$, where $u=(u_{cp^n}^{-1})^\mathsf{T}$. Such embedding is equivariant with respect to the action above and its restriction to $\Sym^{2r}(L^2)$ given by \[\gamma\cdot \varphi(Z)=(bZ+d)^{2r}\varphi(Z\cdot \gamma).\] The dual of this embedding gives the \textit{moment map} 
\[\mom^{2r}\colon D_{\mathscr{U},s}^\circ\defeq\Hom_{\Lambda_{\mathscr{U}}}(A_{\mathscr{U},s}^\circ,\Lambda_\mathscr{U})\longrightarrow \TSym^{2r}((L^2)^\vee)\] (and similarly for $D_{\mathscr{U},s}\defeq D_{\mathscr{U},s}^\circ[1/p]$ over $\Lambda_\mathscr{U}[1/p]$) defined by the integration formula 
\begin{equation}
\left(\mathrm{mom}^{2r}(\mu)\right)(\varphi)=\int_{\mathcal{O}_L}\varphi(z,1)\ \mathrm{d}\mu(z),
\end{equation}
for each $\varphi\in\Sym^{2r}(L^2)$. The dual action of  $\Sigma'(p^m)\defeq \{\gamma^{-1}:\ \gamma\in\Sigma(p^m)\}$ is given by \[\gamma\cdot\mu(\varphi)=\mu(\gamma^{-1}\cdot\varphi).\]

\begin{definition}
We denote by $\boldsymbol{\mathrm{e}}_{cp^n}^{[\mathscr{U},0]}\in D_{\mathscr{U},m}$ the distribution given by the integration formula 
\[\boldsymbol{\mathrm{e}}_{cp^n}^{[\mathscr{U},0]}(\varphi)=\int_{\mathcal{O}_L}\varphi(z)\ \mathrm{d}\mathbf{e}_{cp^n}^{[\mathscr{U},0]}(z)\defeq\varphi(e_{cp^n})\] for any $\varphi\in A_{\mathscr{U},m}$, where, in light of Lemma \ref{lem.vartheta}, $\varphi(e_{cp^n})$ works out to be evaluation at $p^n\sigma_L(\delta)$ if $p$ is split, and at $p^n\sigma_L(\bar\vartheta)$ otherwise.
\end{definition}

\begin{remark}
The distribution $\boldsymbol{\mathrm{e}}_{cp^n}^{[\mathscr{U},0]}$ behaves independently of $m$ in the sense that the distributions of each $m$-layer are compatible with respect to the natural maps $D_{\mathscr{U},m}\to D_{\mathscr{U},m-1}$.
\end{remark}

\begin{proposition}\label{prop.BigBaseVectors} The distribution $\boldsymbol{\mathrm{e}}_{cp^n}^{[\mathscr{U},0]}$ plays the role of a $p$-adic family over $\mathscr{U}$ of base vectors having the following properties: 
\begin{itemize}
\item The action of $i_{cp^n}\left((\mathcal{O}_{cp^n}\otimes\Z_p)^\times\right)$ on $\mathbf{e}_{cp^n}^{[\mathscr{U},0]}$ is via $\sigma_L^{-\kappa_\mathscr{U}}$. 
\item For all integers $k\geq 0$ we have 
$\operatorname{mom}^{2r}(\boldsymbol{\mathrm{e}}_{cp^n}^{[\mathscr{U},0]})=e^{[k,0]}_{cp^n}.$
\end{itemize}
\end{proposition}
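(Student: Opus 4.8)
The plan is to read both assertions directly off the definition of $\boldsymbol{\mathrm{e}}_{cp^n}^{[\mathscr{U},0]}$ as a ``$p$-adic Dirac distribution at $e_{cp^n}$'', combined with the eigenvector property of $e_{cp^n}$ recorded in \S\ref{subsec.BasisVectors}; the only genuinely new ingredient is that one must carry the automorphy factor twisted by the universal character $\kappa_\mathscr{U}$ in place of an integer power. First I would fix notation for the point at which $\boldsymbol{\mathrm{e}}_{cp^n}^{[\mathscr{U},0]}$ evaluates: by Lemma \ref{lem.vartheta} and the normalization $Z=u^{-1}\cdot(X,Y)^\mathsf{T}$ with $u=(u_{cp^n}^{-1})^\mathsf{T}$, the covector $e_{cp^n}$ has homogeneous coordinates $u\binom{z_0}{1}$, where $z_0=p^n\sigma_L(\delta)$ if $p$ splits and $z_0=p^n\sigma_L(\bar\vartheta)$ otherwise, so that in the coordinate $Z$ one has $\boldsymbol{\mathrm{e}}_{cp^n}^{[\mathscr{U},0]}(\varphi)=\varphi(z_0)$ for every $\varphi\in A_{\mathscr{U},m}$.

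For the second bullet I would argue as follows. By construction $\mom^{2r}$ is the dual of the embedding $\Sym^{2r}(L^2)\hookrightarrow A_{\mathscr{U},m}$, which sends a homogeneous form $P$ to the function $Z\mapsto P\bigl(u\binom{Z}{1}\bigr)$. Hence $\mom^{2r}(\boldsymbol{\mathrm{e}}_{cp^n}^{[\mathscr{U},0]})$ is the linear form on $\Sym^{2r}(L^2)$ taking $P$ to $P\bigl(u\binom{z_0}{1}\bigr)=P(e_{cp^n})$, i.e.\ evaluation at the covector $e_{cp^n}$. Unwinding the normalizations of \S\ref{subsec.TSym} and the definition $e_{cp^n}^{[k,0]}=e_{cp^n}^{\odot 2r}$ of \S\ref{subsec.BasisVectors}, this form is precisely $e_{cp^n}^{\odot 2r}\in\TSym^{2r}((L^2)^\vee)\cong(\Sym^{2r}(L^2))^\vee$, by the standard identity equating evaluation of a degree-$d$ homogeneous form at a covector with the pairing against its $d$-th divided power. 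The independence of $k$ then needs no extra argument: the single distribution $\boldsymbol{\mathrm{e}}_{cp^n}^{[\mathscr{U},0]}$ restricts compatibly to $\Sym^{2r}(L^2)$ for every $r$, and the computation is uniform in $r$.

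For the first bullet, fix $x\in(\mathcal{O}_{cp^n}\otimes\Z_p)^\times$ and set $\gamma\defeq i_{cp^n}(x)$. I would first check, from the local-at-$p$ computations underlying Proposition \ref{prop.HeegnerPoint} (which is where $n\ge m$ enters), that $i_{cp^n}$ maps $(\mathcal{O}_{cp^n}\otimes\Z_p)^\times$ into $\Sigma(p^m)$, hence---being a group---into $\Sigma(p^m)\cap\Sigma'(p^m)$, so that $\gamma$ acts on $A_{\mathscr{U},m}$ and dually on $D_{\mathscr{U},m}$. Next, \S\ref{subsec.BasisVectors} tells us that $e_{cp^n}$ is a $\sigma_L(x)^{-1}$-eigenvector for the dual action of $i_{cp^n}(x)$ on $(L^2)^\vee$, that is, $(i_{cp^n}(x)^{-1})^\mathsf{T}e_{cp^n}=\sigma_L(x)^{-1}e_{cp^n}$, and by density of $K^\times$ in $(K\otimes\Q_p)^\times$ this persists for all $x\in(\mathcal{O}_{cp^n}\otimes\Z_p)^\times$; read projectively, it says exactly that $z_0$ is a fixed point of the Möbius action $Z\mapsto Z\cdot\gamma^{\pm1}$ and that the attached automorphy factor equals $\sigma_L(x)^{\mp1}$. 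Substituting into the definition of the $\Sigma'(p^m)$-action on distributions,
\[
(\gamma\cdot\boldsymbol{\mathrm{e}}_{cp^n}^{[\mathscr{U},0]})(\varphi)=\boldsymbol{\mathrm{e}}_{cp^n}^{[\mathscr{U},0]}(\gamma^{-1}\cdot\varphi)=\kappa_\mathscr{U}\bigl(\sigma_L(x)^{-1}\bigr)\,\varphi(z_0)=\sigma_L^{-\kappa_\mathscr{U}}(x)\,\boldsymbol{\mathrm{e}}_{cp^n}^{[\mathscr{U},0]}(\varphi),
\]
for all $\varphi\in A_{\mathscr{U},m}$, where the fixed-point property is used to replace $\varphi(z_0\cdot\gamma^{-1})$ by $\varphi(z_0)$. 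As a consistency check, specializing $\kappa_\mathscr{U}$ at the weight-$k$ character $z\mapsto z^{2r}$ turns this into the action $\sigma_L(x)^{-2r}$ on $e_{cp^n}^{[k,0]}=e_{cp^n}^{\odot 2r}$, matching the second bullet.

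I expect the main obstacle to be bookkeeping rather than anything conceptual: one must pin down the identification of the columns $\binom{\alpha}{1}$ with elements of $(L^2)^\vee$, the transpose-and-inverse in the ``dual'' $\boldG$-action, the side on which $\gamma$ acts on the variable $Z$ together with the resulting cocycle relation for the automorphy factor (which is what lets one rewrite the factor $\kappa_\mathscr{U}(\,\cdot\,)$ in terms of $\sigma_L(x)$ with the correct sign), and---the one substantive verification---the containment $i_{cp^n}\bigl((\mathcal{O}_{cp^n}\otimes\Z_p)^\times\bigr)\subseteq\Sigma(p^m)$ for $m\le n$, which is also implicitly what makes $\widetilde{x}_{cp^n,m}$ a well-defined Heegner point on $\widetilde{X}_m$. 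Once these conventions are fixed, the rest is a faithful transcription of the eigenvector computation of \S\ref{subsec.BasisVectors} through the $\kappa_\mathscr{U}$-twist.
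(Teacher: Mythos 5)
Your proposal is correct and follows essentially the same route as the paper: both arguments treat $\boldsymbol{\mathrm{e}}_{cp^n}^{[\mathscr{U},0]}$ as evaluation at the point of Lemma \ref{lem.vartheta}, use the eigenvector relation $(i_{cp^n}(x)^{-1})^\mathsf{T}e_{cp^n}=\sigma_L(x)^{-1}e_{cp^n}$ from \S\ref{subsec.BasisVectors} to identify the fixed point and the automorphy factor $\kappa_\mathscr{U}(\sigma_L(x)^{\pm1})$ in the monoid action, and dispose of the moment-map statement by noting that on polynomials both $e_{cp^n}^{[k,0]}$ and the distribution act as evaluation at the same point. Your extra remarks (the containment $i_{cp^n}\bigl((\mathcal{O}_{cp^n}\otimes\Z_p)^\times\bigr)\subseteq\Sigma(p^m)$ via optimality of the embedding into $R_m$, and the coordinate bookkeeping through $u=(u_{cp^n}^{-1})^\mathsf{T}$) only make explicit what the paper leaves implicit.
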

\begin{proof}
Let $\bigstar$ denote either $p^n\sigma_L(\delta)$ or $p^n\sigma_L(\bar\vartheta)$ depending on whether $p$ is split or not, respectively. For the first statement, recall that $i_{cp^n}$ is an optimal embedding 
of $\mathcal{O}_{cp^n}$ into the Eichler order $R_m=B^\times\cap U_m$. For $u$ in $(\mathcal{O}_{cp^n}\otimes\Z_p)^\times$, we have
$(i_{cp^n}(u)^{-1})^\mathsf{T}e_{cp^n}=\sigma_L^{-1}(u)e_{cp^n}$ (see \S\ref{subsec.BasisVectors}). Let $u$ act under $i_{cp^n}$ by $\smallmat{a}{b}{c}{d}$. As we just observed, 
\begin{equation}\label{eq.sigmaLu}
\binom{a\bigstar + c}{b\bigstar + d}=i_{cp^n}(u)^\mathsf{T}\binom{\bigstar}{1}=\sigma_L(u)\binom{\bigstar}{1}.
\end{equation}
For each $\varphi\in A_{\mathscr{U},m}$ we then have
\begin{align*}
i_{cp^n}(u)^{-1}\mathbf{e}_{cp^n}^{[\mathscr{U},0]}(\varphi)&=\mathbf{e}_{cp^n}^{[\mathscr{U},0]}(i_{cp^n}(u)\cdot \varphi)=\int_{\mathcal{O}_L}\kappa_\mathscr{U}(bx+d)\varphi\left(\frac{ax+c}{bx+d}\right)\ \mathrm{d}\mathbf{e}_{cp^n}^{[\mathscr{U},0]}(x)\\
&=\kappa_\mathscr{U}(b\bigstar+d)\varphi\left(\dfrac{a\bigstar+c}{b\bigstar+d}\right)\stackrel{\eqref{eq.sigmaLu}}{=}\sigma_L^{\kappa_\mathscr{U}}(u)\varphi(\bigstar)=\sigma_L^{\kappa_\mathscr{U}}(u)\mathbf{e}_{cp^n}^{[\mathscr{U},0]}(\varphi).
\end{align*}
Therefore, we conclude that the action of $i_{cp^n}(u)$ on the measure $\mathbf{e}_{cp^n}^{[\mathscr{U},0]}$ is just the product  
$\sigma_L^{-\kappa_\mathscr{U}(u)}\mathbf{e}_{cp^n}^{[\mathscr{U},0]}$, which gives the first statement. The second statement follows once noticing that over polynomials both $e^{[k,0]}_{cp^n}$ and $\boldsymbol{\mathrm{e}}_{cp^n}^{[\mathscr{U},0]}$ act as evaluation at $\bigstar$.
\end{proof}

To interpolate the remaining vectors, we introduce some twist. First, recall from \cite[\S 5.1]{LoefflerZerbes.Rankin} the operator $\nabla$, which on locally analytic $L$-valued functions defined on $\mathscr{U}$ by \[\nabla f(z)=\dfrac{\mathrm{d}}{\mathrm{d}t}f(tz)\Big|_{t=1}\] (\textit{ibid.} Proposition 5.1.2). For each character $\kappa\in\mathscr{W}$ defined over $\mathscr{U}$, $\nabla\circ\kappa|_{\mathscr{U}}$ acts as multiplication by $\kappa'(1)$ and, in particular, at weight $k\in\Z\cap\mathscr{U}$ corresponding to the character $x\mapsto x^{k-2}$, $\nabla$ specializes to multiplication by $k-2$ (\textit{ibid.} Proposition 5.2.5). Let \[\Pi_j\colon D_{\mathscr{U}-j,m}\otimes\TSym^j\left((L^2)^\vee\right)\longrightarrow D_{\mathscr{U},m}\]denote the \textit{overconvergent projector} from \cite[Corollary 5.2.1]{LoefflerZerbes.Rankin}, which essentially acts as the usual symmetrized tensor product up to denominators: there is a map acting on evaluations at $v\in (L^2)^\vee$ by \[\Pi_j^*\colon D_{\mathscr{U},m}\longrightarrow D_{\mathscr{U}-j,m}\otimes\TSym^j\left((L^2)^\vee\right)\colon\mathrm{ev}_v\mapsto\mathrm{ev}_v\circ v^{\odot j}\] such that $\Pi_j\circ\Pi_j^*$ acts as multiplication by $\binom{\nabla}{j}$. The denominators of $\Pi_j$ are explicitly bounded in terms of $j$ and $m$, see \textit{ibid.} Remark 5.2.2.

\begin{definition}
We denote by $\boldsymbol{\mathrm{e}}_{cp^n}^{[\mathscr{U},j]}$ the distribution given by $\Pi_j(\mathbf{e}_{cp^n}^{[\mathscr{U},0]}\odot (e_{cp^n}^{-j}\odot\bar{e}_{cp^n}^{j}))$.
\end{definition}

As an immediate consequence of Proposition \ref{prop.BigBaseVectors}:

\begin{corollary}\label{cor.BigBaseVectorsj}
The distribution $\boldsymbol{\mathrm{e}}_{cp^n}^{[\mathscr{U},j]}$ has the following properties: 
\begin{itemize}
\item The group $i_{cp^n}((\mathcal{O}_{cp^n}\otimes\Z_p)^\times)$ acts on 
$\boldsymbol{\mathrm{e}}_{cp^n}^{[\mathscr{U},j]}$ via the 
representation $\sigma^{-(\kappa_\mathscr{U}-j)}\bar{\sigma}^{j}$ 
\item For all integers $k\geq 0$ we have 
 $\operatorname{mom}^{2r}(\mathbf{e}^{[\mathscr{U},j]}_{cp^n})=e^{[k,j]}_{cp^n}.$
\end{itemize}
\end{corollary}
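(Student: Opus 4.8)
The plan is to deduce both statements directly from Proposition~\ref{prop.BigBaseVectors} by tracking how the operators $\Pi_j$, $\Pi_j^*$, and $\odot$ interact with the group action and with the moment map $\mom^{2r}$. First I would establish the group-action statement. By definition $\boldsymbol{\mathrm{e}}_{cp^n}^{[\mathscr{U},j]}=\Pi_j\bigl(\mathbf{e}_{cp^n}^{[\mathscr{U},0]}\odot(e_{cp^n}^{\odot(-j)}\odot\bar{e}_{cp^n}^{\odot j})\bigr)$, so it suffices to know how $i_{cp^n}((\mathcal{O}_{cp^n}\otimes\Z_p)^\times)$ acts on each tensor factor and that $\Pi_j$ is equivariant. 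Proposition~\ref{prop.BigBaseVectors} gives that $i_{cp^n}(u)$ acts on $\mathbf{e}_{cp^n}^{[\mathscr{U},0]}$ by $\sigma_L^{-\kappa_\mathscr{U}}(u)$; on $e_{cp^n}$ (resp.\ $\bar{e}_{cp^n}$) it acts by $\sigma_L^{-1}(u)$ (resp.\ $\bar{\sigma}_L^{-1}(u)$), as recorded in \S\ref{subsec.BasisVectors}, so it acts on $e_{cp^n}^{\odot(-j)}\odot\bar{e}_{cp^n}^{\odot j}$ by $\sigma_L^{j}\bar{\sigma}_L^{-j}(u)$. Multiplying the weights $-\kappa_\mathscr{U}+j$ on the $\sigma$ side and $-j$ on the $\bar\sigma$ side, the tensor product carries the action $\sigma^{-(\kappa_\mathscr{U}-j)}\bar\sigma^{j}$; since $\Pi_j$ is a morphism of $\Sigma'(p^m)$-modules (it is built from the equivariant embedding $\Sym^{2r}(L^2)\hookrightarrow A_{\mathscr{U},m}$ and its dual in \cite[Corollary 5.2.1]{LoefflerZerbes.Rankin}), the same action descends to $\boldsymbol{\mathrm{e}}_{cp^n}^{[\mathscr{U},j]}$. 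This gives the first bullet.

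For the second bullet I would use the compatibility of the moment map with $\Pi_j$ and $\Pi_j^*$. The key identity is that $\mom^{2r}\circ\Pi_j$ agrees, up to the combinatorial factor $\binom{\nabla}{j}$ which at weight $k=2r+2$ specializes to $\binom{2r}{j}$ (by \cite[Proposition 5.2.5]{LoefflerZerbes.Rankin}, $\nabla$ acts as multiplication by $k-2=2r$ on weight-$k$ vectors), with the classical symmetrized tensor product $\odot$ followed by $\mom^{2(r-j)}$ on the first factor. Concretely, since $\Pi_j\circ\Pi_j^* = \binom{\nabla}{j}$ and the moment map intertwines the evaluation-at-$v$ construction with $\odot$-multiplication by $v$, one gets $\mom^{2r}\bigl(\Pi_j(\mu\odot v^{\odot j})\bigr) = \binom{2r}{j}\,\mom^{2(r-j)}(\mu)\odot v^{\odot j}$ at weight $k$. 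Applying this with $\mu=\mathbf{e}_{cp^n}^{[\mathscr{U},0]}$ and $v^{\odot j}=e_{cp^n}^{\odot(-j)}\odot\bar{e}_{cp^n}^{\odot j}$, and invoking Proposition~\ref{prop.BigBaseVectors} which gives $\mom^{2(r-j)}(\mathbf{e}_{cp^n}^{[\mathscr{U},0]})=e_{cp^n}^{\odot(2(r-j))}$ (evaluation at $\bigstar$ on polynomials of the appropriate degree), I obtain
\[
\mom^{2r}(\boldsymbol{\mathrm{e}}_{cp^n}^{[\mathscr{U},j]}) = \binom{2r}{j}^{-1}\cdot\binom{2r}{j}\cdot e_{cp^n}^{\odot(2r-2j)}\odot e_{cp^n}^{\odot(-j)}\odot\bar{e}_{cp^n}^{\odot j} \cdot\text{(normalization)},
\]
which upon matching exponents $e_{cp^n}^{\odot(2r-j)}\odot\bar{e}_{cp^n}^{\odot j}$ is exactly $e_{cp^n}^{[k,j]}$ as defined in \S\ref{subsec.BasisVectors}. (The statement as written absorbs the $\binom{\nabla}{j}$ into the definition of $\Pi_j$, so no binomial factor survives; one must be slightly careful about whether the normalization follows the convention of \cite{LoefflerZerbes.Rankin} or that of \cite{JetchevLoefflerZerbes}, since this is precisely the binomial factor that later appears in Theorem~\ref{teo.Specialization}.)

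The main obstacle I expect is not conceptual but bookkeeping: correctly tracking the normalization of $\Pi_j$ versus $\Pi_j^*$ and the precise interaction of the weight-$(-j)$ shift in $D_{\mathscr{U}-j,m}$ with the moment map $\mom^{2(r-j)}$ on that shifted space, so that the powers of $e_{cp^n}$ and $\bar e_{cp^n}$ come out with the right exponents and the $\binom{\nabla}{j}$ factor cancels exactly as claimed. In particular one has to check that the twist $\kappa_\mathscr{U}-j$ really is the correct weight for the first factor (so that $\mom^{2(r-j)}$, not $\mom^{2r}$, is the relevant specialization), which follows from how $\Pi_j$ was set up in \cite[\S5.2]{LoefflerZerbes.Rankin} but deserves an explicit sentence. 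Everything else is a direct consequence of Proposition~\ref{prop.BigBaseVectors} together with the cited properties of $\nabla$ and $\Pi_j$.
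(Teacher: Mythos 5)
Your overall route---deducing both bullets from Proposition~\ref{prop.BigBaseVectors} via the equivariance of $\Pi_j$ and its compatibility with the moment maps---is exactly what the paper intends (it records no argument, calling the corollary an immediate consequence of that proposition). But your execution has a genuine gap in the second bullet: the compatibility you invoke is off in both degree and normalisation. You apply $\mom^{2(r-j)}$ to the first factor, which would produce $e_{cp^n}^{\odot(2r-2j)}$, so the exponent of $e_{cp^n}$ in your final expression cannot come out to $2r-j$; the display only reaches $e_{cp^n}^{[k,j]}$ through an unexplained ``(normalization)'' and the phrase ``upon matching exponents''. The correct mechanism is that $\Pi_j$ interpolates the symmetrised multiplication $\TSym^{2r-j}((L^2)^\vee)\otimes\TSym^{j}((L^2)^\vee)\to\TSym^{2r}((L^2)^\vee)$, so the distribution factor must be read in $D_{\mathscr{U}-j,m}$ and paired with its degree-$(2r-j)$ moment, and no binomial factor appears at all: $\binom{\nabla}{j}$ arises only from the composite $\Pi_j\circ\Pi_j^*$, and in this paper it is dealt with later, by the $\binom{\nabla}{j}^{-1}$ twist in the definition of $\mathbf{z}^{[\mathscr{F},j]}_{cp^n,1}$, not here. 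Leaving open ``whether the binomial survives'' is not an option, because the exact equality $\mom^{2r}(\mathbf{e}^{[\mathscr{U},j]}_{cp^n})=e^{[k,j]}_{cp^n}$, with no binomial, is precisely what the specialization statements (the proposition following Definition~\ref{def.jBigHeegnerClass} and Proposition~\ref{prop.SpecializationNabla}) rely on.

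There is also an internal inconsistency in your first bullet: your own weight count gives $-\kappa_\mathscr{U}+j$ on the $\sigma$-side and $-j$ on the $\bar\sigma$-side, i.e.\ the character $\sigma^{-(\kappa_\mathscr{U}-j)}\bar\sigma^{-j}$, yet you assert the conclusion with $\bar\sigma^{+j}$; as written, the stated conclusion does not follow from the computation you performed. Since negative $\TSym$-powers do not exist, the symbol $e_{cp^n}^{-j}\odot\bar e_{cp^n}^{j}$ has to be read as carrying the $\boldH$-side character twist $\sigma^{-j}\bar\sigma^{j}$, and you should either track that twist explicitly through the computation or flag the sign discrepancy with the statement (at weight $k$ the $\boldG$-part $e_{cp^n}^{\odot(2r-j)}\odot\bar e_{cp^n}^{\odot j}$ transforms by $\sigma^{-(2r-j)}\bar\sigma^{-j}$ under the dual action), rather than silently adjusting the exponent. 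The appeal to the $\Sigma'(p^m)$-equivariance of $\Pi_j$ itself is fine.
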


\subsection{$p$-adic interpolation of generalized Heegner classes}
As in \S\ref{subsec.AbelJacobi}, let $\sigma_{\et}^{\kappa_{\mathscr{U}}-j}\sigma_{\et}^j$ be the étale realization of $\sigma_L^{\kappa_{\mathscr{U}}-j}\sigma_L^j$. The ``big'' Gysin map
\begin{equation}\label{eq.BigGyin}
\xymatrix{
H^0_{\et}\left(S_{cp^n},\delta_{cp^n}^\ast(D_\mathscr{U}\otimes\sigma^{\kappa_\mathscr{U}-j}_{\et}\bar{\sigma}^j_{\et})\right)\ar[r]^-{\delta_{cp^n,\ast}}& H^2_{\et}\left({X}_m\otimes_K F_{cp^n},D_\mathscr{U}(1)\otimes\sigma^{\kappa_\mathscr{U}-j}_{\et}\bar{\sigma}_{\et}^j\right)}
\end{equation}
interpolates the Gysin maps from \S\ref{subsec.GysinMap}, in the sense of being compatible with the one in \eqref{eq.Gysin} via the moment maps: for each $k\in\Z\cap\mathscr{U}$ with $k\geq j$ we have, writing $\overline{X}_m\defeq X_m\otimes_\Q\overline\Q$ and $\mathscr{D}^{(2r-j,j)}_{\et}\defeq D_{\mathscr{U},m}\otimes\sigma_{\et}^{\kappa_\mathscr{U}-j}\bar{\sigma}_{\et}^j$ to simplify the notation in the diagram,
\begin{equation}\label{eq.DiagramInterpolationGHCs}
\begin{tikzcd}[column sep = large]
H^0_{\et}\left(S_{cp^n},\delta_{cp^n}^\ast(\mathscr{D}^{(2r-j,j)}_{\et})\right)\arrow{d}{\delta_{cp^n,\ast}}\arrow{r}{\operatorname{mom}^{2r}}& H^0_{\et}\left(S_{cp^n},\delta_{cp^n}^\ast(\mathscr{M}_{\et}^{(2r-j,j)})\right)\arrow{d}{\delta_{cp^n,\ast}} \\ 
H^2_{\et}\left({X}_m\otimes F_{cp^n},\mathscr{D}^{(2r-j,j)}_{\et}(1)\right)\arrow{d}{\sim}\arrow{r}{\mathrm{mom}^{2r}} & H^2_{\et}\left({X}_m\otimes F_{cp^n},\mathscr{M}_{\et}^{(2r-j,j)}(1)\right)\arrow{d}{\sim}\\
H^1\left(F_{cp^n},H_{\et}^1(\overline{X}_m,\mathscr{D}^{(2r-j,j)}_{\et}(1))\right)\arrow{r}{\mathrm{mom}^{2r}} & H^1\left(F_{cp^n},H^1_{\et}(\overline{X}_m,\mathscr{M}_{\et}^{(2r-j,j)}(1))\right)
\end{tikzcd}
\end{equation}

\begin{definition}\label{def.jBigHeegnerClass}
Let $j\geq 0$ and $n\geq m\geq 1$ be integers. 
The \emph{$j$-component of the big generalized Heegner class associated to $\mathscr{U}$} is $\mathbf{z}^{[\mathscr{U},j]}_{cp^n,m}\defeq\delta_{cp^n,\ast}(\boldsymbol{\mathrm{e}}^{[\mathscr{U},j]}_{cp^n}).$
\end{definition}

By Diagram \eqref{eq.DiagramInterpolationGHCs}, $\mathbf{z}^{[\mathscr{U},j]}_{cp^n,m}$ is an element of $H^1\big(F_{cp^n},H^1_{\et}(\overline{X}_m,\mathscr{D}_{\et}^{(2r-j,j)}(1))\big)$.

\begin{proposition}
For each $k\in\Z\cap\mathscr{U}$ with $k\geq j$ we have $\mathrm{mom}^{2r}(\mathbf{z}^{[\mathscr{U},j]}_{cp^n,m})=z_{cp^n,m}^{[k,j]}.$
\end{proposition}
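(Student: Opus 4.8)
The plan is to obtain the identity as a formal consequence of the interpolation diagram \eqref{eq.DiagramInterpolationGHCs}, together with the two compatibilities already established: the moment computation for the big base vector (Corollary \ref{cor.BigBaseVectorsj}) and the description of the Gysin image of $e^{[k,j]}_{cp^n}$ (Proposition \ref{prop.Gysin}). Concretely, unwinding Definition \ref{def.jBigHeegnerClass} and chasing the outer rectangle of \eqref{eq.DiagramInterpolationGHCs} — down the left column and then along the bottom row — I would write
\begin{align*}
\mathrm{mom}^{2r}\bigl(\mathbf{z}^{[\mathscr{U},j]}_{cp^n,m}\bigr)
&=\mathrm{mom}^{2r}\bigl(\delta_{cp^n,\ast}(\boldsymbol{\mathrm{e}}^{[\mathscr{U},j]}_{cp^n})\bigr)
=\delta_{cp^n,\ast}\bigl(\mathrm{mom}^{2r}(\boldsymbol{\mathrm{e}}^{[\mathscr{U},j]}_{cp^n})\bigr)\\
&=\delta_{cp^n,\ast}\bigl(e^{[k,j]}_{cp^n}\bigr)=z^{[k,j]}_{cp^n,m},
\end{align*}
where the first equality is Definition \ref{def.jBigHeegnerClass}, the second is the commutativity of \eqref{eq.DiagramInterpolationGHCs}, the third is Corollary \ref{cor.BigBaseVectorsj} (valid for $k\geq j$), and the fourth is Proposition \ref{prop.Gysin}, the class $z^{[k,j]}_{cp^n,m}$ being read in its incarnation in $H^1\bigl(F_{cp^n},H^1_{\et}(\overline{X}_m,\mathscr{M}^{(2r-j,j)}_{\et}(1))\bigr)$ obtained via the degeneration \eqref{eq.degenerationHLS}.

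The substance of the argument is therefore the commutativity of the three squares of \eqref{eq.DiagramInterpolationGHCs}, and this is where I would spend the effort. The key point is that $\mathrm{mom}^{2r}$ is not merely a map of modules: it is the dual of the $\Sigma(p^m)$-equivariant embedding $\Sym^{2r}(L^2)\hookrightarrow A_{\mathscr{U},m}$, hence the weight-$k=2r+2$ specialization of $D_{\mathscr{U},m}\otimes(\sigma_L^{\kappa_\mathscr{U}-j}\bar\sigma_L^{j})$ onto $V^{(2r-j,j)}(L)$, and as such it induces, through the étale realization functor of \S\ref{subsec.RepresentationsAssociatedToMotives}, an honest morphism $\mathscr{D}^{(2r-j,j)}_{\et}\to\mathscr{M}^{(2r-j,j)}_{\et}$ of lisse étale sheaves on $\widetilde{X}_m\times S_{cp^n}$, compatible with pullback along $\delta_{cp^n}$. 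Granting this, the top square commutes because the Gysin pushforward $\delta_{cp^n,\ast}$ is natural in the coefficient sheaf, and the lower square commutes because the degeneration isomorphism \eqref{eq.degenerationHLS} coming from the Hochschild--Lyndon--Serre spectral sequence is likewise natural; the bookkeeping that the left column carries the universal twist $\sigma_{\et}^{\kappa_\mathscr{U}-j}\bar\sigma_{\et}^{j}$ while the right column carries $\sigma_{\et}^{2r-j}\bar\sigma_{\et}^{j}$ is precisely what is absorbed by $\mathrm{mom}^{2r}$ being the weight-$k$ specialization.

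The main obstacle, accordingly, is not the chase but making this naturality precise for $D_\mathscr{U}$, which is a projective limit of the torsion sheaves appearing at the finite Igusa layers rather than a single constructible sheaf: one must check that the Gysin map along the closed immersion $\delta_{cp^n,\ast}\colon S_{cp^n}\hookrightarrow\widetilde{X}_m\times S_{cp^n}$, and its composition with \eqref{eq.degenerationHLS}, commute with passage to the limit and with the finite-level moment maps. I would handle this exactly as the corresponding point for $\mathscr{M}^{2r}_{\et}$ in \S\ref{subsec.GysinMap}: reduce to each $\Z/p^n\Z$-level, where the Gysin map is an honest morphism of constructible sheaves and functoriality is immediate, invoke the exactness needed to commute the relevant cohomology functors with $\varprojlim$, and then pass to the limit; this is the route taken in \cite[\S 4]{JetchevLoefflerZerbes}. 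Once the diagram is known to commute, the displayed chain of equalities finishes the proof.
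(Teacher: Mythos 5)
Your proof is correct and follows essentially the same route as the paper: the paper's own proof is exactly the chain of equalities you display, citing the commutativity of Diagram \eqref{eq.DiagramInterpolationGHCs}, Corollary \ref{cor.BigBaseVectorsj}, and Proposition \ref{prop.Gysin}. The additional discussion you give of why the diagram commutes (naturality of the Gysin map and of the Hochschild--Lyndon--Serre degeneration in the coefficient sheaf, and the passage to the limit for $D_{\mathscr{U}}$) is material the paper leaves implicit when it asserts the compatibility of the big Gysin map with the moment maps, so it is a reasonable elaboration rather than a divergence.
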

\begin{proof}
Again by Diagram \eqref{eq.DiagramInterpolationGHCs} and Corollary \ref{cor.BigBaseVectorsj}, \[\mathrm{mom}^{2r}(\mathbf{z}^{[\mathscr{U},j]}_{cp^n,m})=\mathrm{mom}^{2r}(\delta_{cp^n,\ast}(\boldsymbol{\mathrm{e}}^{[\mathscr{U},j]}_{cp^n}))=\delta_{cp^n,\ast}(\mathrm{mom}^{2r}(\boldsymbol{\mathrm{e}}^{[\mathscr{U},j]}_{cp^n}))=\delta_{cp^n,\ast}(e_{cp^n,m}^{[k,j]}),\] and it follows then from Proposition \ref{prop.Gysin}.
\end{proof}

Next we derive the norm-compatibility relations satisfied by the set of big generalized Heegner classes or varying $n$, $m$ and $j$ that turns it into an Euler system, arguing similarly to the elliptic counterpart in \cite[Proposition 5.1.2]{JetchevLoefflerZerbes}. We begin with an alternative description of the induced action of the Hecke operators in the cohomology level.

For $m\ge 1$, let $\widetilde{U}^1_m$ be the subgroup of $\widetilde{U}_m$ whose elements have $p$-parts \textit{lower} triangular modulo $p$: \[\widetilde{U}^1_m\defeq\{g\in \widetilde{U}_m:\ g_p\equiv\left(\begin{smallmatrix}* & 0 \\ * & *\end{smallmatrix}\right)\mod p\}\] (notice that these matrices are already upper triangular modulo $p$ by being such modulo $p^m$, thus elements of $\widetilde{U}^1_m$ are diagonal modulo $p$). We define the correspondent Shimura curve as \[\widetilde{X}^1_m\defeq B^\times\backslash (\mathcal{H}^{\pm}\times\widehat{B}^\times)/ \widetilde{U}_m^1,\] which, as a moduli space, classifies triplets $(A,\iota,\alpha)$ consisting of a QM abelian surface $(A,\iota)$, a level $U$ structure $\alpha$ of full level $N^+p^{m+1}$, where, in the notation of \S\ref{subsec.NaiveLevelStructures} \[U=\{g\in\widehat{\mathcal{O}}^\times_B:\ \exists a,b,d\text{ such that }i_{N^{+}p^m}\circ\widehat{\pi}_{N^+p^m}(g)\equiv\left(\begin{smallmatrix}a & pb \\ 0 & d\end{smallmatrix}\right)\mod N^+p^m\},\] and to which we may also attach an arithmetic trivialization $\beta\colon\boldsymbol{\mu}_{p^{m+1}}\to e_pA[p^{m+1}]^0$. This kind of Shimura curve relates to the other kind $\widetilde{X}_m$ by the following maps:

\begin{center}
\begin{tikzcd}[column sep = large]
\widetilde{X}_m^1 \arrow[r,"\Phi_p"]\arrow[d, "\hat{\mathrm{pr}}", swap] & \widetilde{X}_{m+1} \arrow[d,"\mathrm{pr}"] \\ \widetilde{X}_m & \,\widetilde{X}_m,
\end{tikzcd}
\end{center}
where

\begin{itemize}
\item $\mathrm{pr}$ takes a quadruple $(A,\iota,\alpha,\beta)$ and maps into $(A,\iota,\alpha_p,\beta_p)$, where $\beta_p=\beta\circ\varphi_p$, being $\varphi_p\colon\boldsymbol{\mu}_{p^{m+1}}\to\boldsymbol{\mu}_{p^{m}}$ the map $x\mapsto x^p$, and $\alpha_p$ is (the equivalence class of) $\psi_p\circ\alpha$, where $\psi_p$ is the map $(x,y)\mapsto (px,py)$;

\item $\hat{\mathrm{pr}}$ acts similarly to $\mathrm{pr}$ with the correspondent changes in the level structure;

\item $\Phi_p$ acts as $\left(\begin{smallmatrix}p & 0 \\ 0 & 1\end{smallmatrix}\right)$ on the level structure.
\end{itemize}

These maps induce pullbacks and pushforwards (trace maps) in the cohomology of the Shimura curves above. Following \cite[\S 2.4]{KingsLoefflerZerbes.ERL} and \cite[\S 2.9]{Kato}, we define the Hecke operator \[U_p\defeq (\hat{\mathrm{pr}})_*\circ(\Phi_p)^*\circ(\mathrm{pr})^*,\] which coincides with the previously defined Hecke operator $U_p$ in the level of quaternionic modular forms as follows: in the notation of \S\ref{subsec.HeckeOperators}, $(\mathrm{pr})^*$ lifts a quadruplet $(A,\iota,\alpha,\beta)$ to $(A,\iota,\alpha_{p^{-1}},\beta_{p^{-1}})$ by composing $\alpha$ with $\psi_{p^{-1}}\colon(x,y)\mapsto (p^{-1}x,p^{-1}y)$ and precomposing $\beta$ with $\varphi_{p^{-1}}\colon x\mapsto x^{1/p}$; the isomorphism $(\Phi_p)^*$ introduces the averaging factor $\frac{1}{p}$; lastly, the pushforward (trace) $(\hat{\mathrm{pr}})_*$ projects the quadruplets back to level $U_1(N^+p^m)$ by taking the trace of the quotients running over all $p$-torsion subgroups of $A$, recovering the original definition of $U_p$.

Under the Poincaré duality (refer again to \cite[Corollary 11.2]{Milne.EC}), pushforwards and pullbacks are dual to each other, and $(\Phi_p)_*=(\Phi_{p}^{-1})^*$ as the induced actions of $\left(\begin{smallmatrix}p & 0 \\ 0 & 1\end{smallmatrix}\right)$ and $\left(\begin{smallmatrix}p^{-1} & 0 \\ 0 & 1\end{smallmatrix}\right)$ are dual to each other. Thus, the dual to the Hecke operator $U_p$ is \[U_p'\defeq ({\mathrm{pr}})_*\circ(\Phi_p^{-1})^*\circ(\hat{\mathrm{pr}})^*.\]

\begin{proposition}[Euler relations]\label{prop.EulerRelations}
$\cores_{F_{cp^{n+1}}/F_{cp^n}}(\mathbf{z}_{cp^{n+1},m}^{[\mathscr{U},j]})=U_p'\cdot\mathbf{z}_{cp^{n},m}^{[\mathscr{U},j]}$, for all $n\geq m\geq 1$ 
and all $j\geq 0$. 
\end{proposition}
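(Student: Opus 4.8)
The plan is to reduce the Euler relation to a purely local statement at $p$ about Heegner points, exactly as in \cite[Proposition 5.1.2]{JetchevLoefflerZerbes}, exploiting that the big generalized Heegner class is the image under the Gysin map $\delta_{cp^n,\ast}$ of the big basis vector $\boldsymbol{\mathrm{e}}^{[\mathscr{U},j]}_{cp^n}$, and that corestriction and the Hecke operator $U_p'$ can be computed on the source of the Gysin map. First I would record that $\cores_{F_{cp^{n+1}}/F_{cp^n}}$, being induced by the push-forward along the finite \'etale cover $S_{cp^{n+1}}\to S_{cp^n}$ of zero-dimensional Shimura varieties, commutes with $\delta_{cp^{n+1},\ast}$ up to the natural comparison between the embeddings $\delta_{cp^{n+1}}$ and $\delta_{cp^n}$; thus it suffices to compute $\cores_{F_{cp^{n+1}}/F_{cp^n}}$ of the big basis vector and match it with $U_p'$ applied to $\boldsymbol{\mathrm{e}}^{[\mathscr{U},j]}_{cp^n}$, where $U_p'=({\mathrm{pr}})_*\circ(\Phi_p^{-1})^*\circ(\hat{\mathrm{pr}})^*$ acts through the correspondences on the tower $\widetilde{X}^1_m\to\widetilde{X}_{m+1},\widetilde{X}_m$ just introduced.

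Next I would unwind both sides in terms of the explicit Heegner points. On the geometric side, the degeneracy maps $\mathrm{pr}$, $\hat{\mathrm{pr}}$, $\Phi_p$ act on the CM point $\vartheta_{cp^{n+1}}$ and its moduli datum $(A_{\vartheta_{cp^{n+1}}},\iota\circ\phi_{cp^{n+1}}^\ast,\phi_{cp^{n+1}}\circ\alpha,(H,P))$ by the recipe of \S\ref{subsec.HeckeOperators} and Remark \ref{rem.Factorization}: $\mathrm{pr}$ divides by the canonical subgroup, $\Phi_p$ multiplies the level structure by $\smallmat p001$, and the trace $(\hat{\mathrm{pr}})_*$ sums over the $p$-isogenous quotients. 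Concretely the adelic element $\xi_{cp^{n+1}}$ differs from $\xi_{cp^n}$ only in its $p$-component, which is $\smallmat{\vartheta}{-1}{1}{0}\smallmat{p^{n+1}}{0}{0}{1}$ (resp. $\smallmat{0}{-1}{1}{0}\smallmat{p^{n+1}}{0}{0}{1}$), so the passage from level $cp^{n+1}$ to $cp^n$ is implemented precisely by the Hecke correspondence defining $U_p'$ at $p$. On the analytic side, the norm-compatibility of the distributions $\boldsymbol{\mathrm{e}}^{[\mathscr{U},j]}_{cp^n}$ should be read off from their defining integration formula together with Lemma \ref{lem.vartheta}: $\boldsymbol{\mathrm{e}}^{[\mathscr{U},0]}_{cp^{n+1}}$ is evaluation at $p^{n+1}\sigma_L(\delta)$ (or $p^{n+1}\sigma_L(\bar\vartheta)$), which under the $\Sigma'(p^m)$-action coming from $U_p'$ relates to evaluation at $p^n\sigma_L(\delta)$; the overconvergent projector $\Pi_j$ of \cite[Corollary 5.2.1]{LoefflerZerbes.Rankin} is equivariant for this action, so the $j=0$ case propagates to all $j$.

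The main step — and the place where I expect the real work to be — is the careful bookkeeping of the three operations $\mathrm{pr}$, $\hat{\mathrm{pr}}$, $\Phi_p$ on the level structures and arithmetic trivializations at $p$, and checking that the resulting sum over $p$-isogenous quotients of $A_{\vartheta_{cp^{n+1}}}$ reproduces, after applying $\delta_{cp^n,\ast}$, exactly the corestriction of $\mathbf{z}^{[\mathscr{U},j]}_{cp^{n+1},m}$ without spurious scalar factors (in particular that the $\tfrac1p$ in $(\Phi_p)^*$ and the degree of $\hat{\mathrm{pr}}$ cancel correctly, so that one gets $U_p'$ on the nose rather than $p\,U_p'$ or $U_p'/p$). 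This is the quaternionic analogue of the computation in \cite[Proposition 5.1.2]{JetchevLoefflerZerbes} and \cite[\S 5.7]{KingsLoefflerZerbes.ERL}; the only genuinely new ingredient is tracking the idempotent $e$ and the split $A[p^m]=\ker(e_p)\oplus\ker(\bar e_p)$ through the correspondence, which is handled because the arithmetic trivialization $\beta$ only sees the $e_p$-part $eA[p^\infty]^0$ and the $p$-component of $\xi_{cp^n}$ was chosen (in \S\ref{subsec.HeckePoints}) to be compatible with that splitting. Once the local identity at $p$ is established, the global Euler relation follows by combining it with the compatibility of $\delta_{cp^{n+1},\ast}$ with corestriction and with the identification of $U_p'$ on cohomology recalled just before the statement.
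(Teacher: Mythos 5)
Your overall strategy (unwind both sides through the explicit CM points and the correspondences $\mathrm{pr}$, $\hat{\mathrm{pr}}$, $\Phi_p$, using compatibility of the Gysin map with corestriction) is the same one the paper follows, but the step you defer to ``careful bookkeeping'' is precisely the content of the proposition, and the concrete plan you sketch for it would not go through as written. The operator $U_p'=(\mathrm{pr})_*\circ(\Phi_p^{-1})^*\circ(\hat{\mathrm{pr}})^*$ acts on the cohomology of the curve $\widetilde{X}_m$, not on $H^0_{\et}(S_{cp^n},\cdot)$, so there is no direct ``matching of $\cores_{F_{cp^{n+1}}/F_{cp^n}}$ of the big basis vector with $U_p'$ applied to $\boldsymbol{\mathrm{e}}^{[\mathscr{U},j]}_{cp^n}$'': the two sides can only be compared after the Gysin maps $\delta_{cp^{n+1},\ast}$ and $\delta_{cp^n,\ast}$, which are attached to \emph{different} embeddings $i_{cp^{n+1}}$, $i_{cp^n}$ and different varieties $S_{cp^{n+1}}$, $S_{cp^n}$; the ``natural comparison between the embeddings'' you invoke is exactly what must be proved. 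The distribution-side discussion (norm-compatibility of $\boldsymbol{\mathrm{e}}^{[\mathscr{U},0]}_{cp^n}$ via Lemma \ref{lem.vartheta} and $\Sigma'(p^m)$-equivariance of $\Pi_j$) is neither needed nor a substitute for the geometric input: in the actual proof the coefficient sheaf is carried along functorially and no computation with the measures occurs. Your worry about factors of $p$ is also moot, since the cohomological $U_p'$ used in the statement is the bare composite above, with no averaging $1/p$ (that normalization belongs to the action on $p$-adic modular forms, not to the correspondence on cohomology).

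The missing idea is the auxiliary class on the intermediate curve $\widetilde{X}^1_m$, which is how the paper converts corestriction into geometry. Since $\Gal(H_{cp^{n+1}}/H_{cp^n})\cong\widetilde{U}_m/\widetilde{U}^1_m\cong\widetilde{U}_m/\widetilde{U}_{m+1}$, the $\hat{\mathrm{pr}}$-preimage of the orbit of $x_{cp^n,m}(1)=[(\iota_K^B,\xi_{cp^n})]\in\widetilde{X}_m$ is the $\Gal(H_{cp^{n+1}}/H_{cp^n})$-orbit of the same adelic point viewed in $\widetilde{X}^1_m$; repeating the construction of the class on $\widetilde{X}^1_m$ therefore yields a class $\mathbf{z}^{1,[\mathscr{U},j]}_{cp^n,m}$ defined over $F_{cp^{n+1}}$ satisfying \[\cores_{F_{cp^{n+1}}/F_{cp^n}}\bigl(\mathbf{z}^{1,[\mathscr{U},j]}_{cp^n,m}\bigr)=(\hat{\mathrm{pr}})^*\bigl(\mathbf{z}^{[\mathscr{U},j]}_{cp^n,m}\bigr).\] With this identity in hand the computation is formal: $(\Phi_p^{-1})^*=(\Phi_p)_*$ sends $\mathbf{z}^{1,[\mathscr{U},j]}_{cp^n,m}$ to $\mathbf{z}^{[\mathscr{U},j]}_{cp^{n+1},m+1}$, because $\Phi_p$ maps $[(\iota_K^B,\xi_{cp^n})]\in\widetilde{X}^1_m$ to $[(\iota_K^B,\xi_{cp^{n+1}})]\in\widetilde{X}_{m+1}$, and $(\mathrm{pr})_*$ takes this point to the corresponding point of $\widetilde{X}_m$, giving $U_p'\cdot\mathbf{z}^{[\mathscr{U},j]}_{cp^n,m}=\cores_{F_{cp^{n+1}}/F_{cp^n}}(\mathbf{z}^{[\mathscr{U},j]}_{cp^{n+1},m})$, with corestriction commuting with $(\mathrm{pr})_*\circ(\Phi_p^{-1})^*$ because these are defined over $F_{cp^n}$. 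Without this intermediate class (or an equivalent identification of the fiber of the $U_p'$ correspondence over $\vartheta_{cp^n}$ with the Galois orbit of $\vartheta_{cp^{n+1}}$), your outline does not yet constitute a proof.
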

\begin{proof}
Since $\Gal(H_{cp^{n+1}}/H_{cp^n})\cong \widetilde{U}_m/\widetilde{U}_m^1\cong \widetilde{U}_m/\widetilde{U}_{m+1}$, the last isomorphism being a result of the action of $\Phi_p$ as $\left(\begin{smallmatrix}p & 0 \\ 0 & 1\end{smallmatrix}\right)$, the preimage of the $(\widetilde{U}_m/\widetilde{U}_m^1)$-orbit of $x_{cp^n,m}(1)=[(\iota_K,\xi_{cp^n})]$ in $\widetilde{X}_m$ under $\hat{\mathrm{pr}}$ is the $\Gal(H_{cp^{n+1}}/H_{cp^n})$ orbit of the CM point $[(\iota_K,\xi_{cp^n})]$ in $\widetilde{X}_m^1$. Repeating the construction of the $j$-component of the big generalized Heegner class associated to $\mathscr{U}$ for the Shimura curve $\widetilde{X}_m^1$ gives a class \[\mathbf{z}^{1,[\mathscr{U},j]}_{cp^n,m}=\delta_{cp^n,*}(\mathbf{e}^{1,[\mathscr{U},j]}_{cp^n})\in H^1\left(F_{cp^{n+1}},H^1_{\et}(\widetilde{X}_m^1\otimes_{\Q}\overline{\Q}, D_{\mathscr{U},m}\otimes\sigma_{\et}^{\kappa_\mathscr{U}-j}\bar{\sigma}_{\et}^j)\right),\] defined over $F_{cp^{n+1}}$, and thus satisfying \[\cores_{F_{cp^{n+1}}/F_{cp^n}}(\mathbf{z}^{1,[\mathscr{U},j]}_{cp^n,m})=(\hat{\mathrm{pr}})^*(\mathbf{z}^{[\mathscr{U},j]}_{cp^n,m}).\] Therefore
\begin{align*}
U_p'\cdot\mathbf{z}_{cp^{n},m}^{[\mathscr{U},j]}&=({\mathrm{pr}})_*\circ(\Phi_p^{-1})^*\circ(\hat{\mathrm{pr}})^*(\mathbf{z}^{[\mathscr{U},j]}_{cp^n,m})\\
&=({\mathrm{pr}})_*\circ(\Phi_p^{-1})^*(\cores_{F_{cp^{n+1}}/F_{cp^n}}(\mathbf{z}^{1,[\mathscr{U},j]}_{cp^n,m}))\\
&=\cores_{F_{cp^{n+1}}/F_{cp^n}}\left(({\mathrm{pr}})_*\circ(\Phi_p^{-1})^*(\mathbf{z}^{1,[\mathscr{U},j]}_{cp^n,m})\right)
\end{align*}
The morphism $\Phi_p$ maps the CM point $[(\iota_K,\xi_{cp^n})]\in \widetilde{X}_m^1$ into $[(\iota_K,\xi_{cp^{n+1}})]\in \widetilde{X}_{m+1}$, so \[(\Phi_p^{-1})^*(\mathbf{z}^{1,[\mathscr{U},j]}_{cp^n,m})=(\Phi_p)_*(\mathbf{z}^{1,[\mathscr{U},j]}_{cp^n,m})=\mathbf{z}^{[\mathscr{U},j]}_{cp^{n+1},m+1},\] and finally $(\mathrm{pr})_*$ projects $[(\iota_K,\xi_{cp^{n+1}})]\in \widetilde{X}_{m+1}$ into the ``same point'' $[(\iota_K,\xi_{cp^{n+1}})]\in \widetilde{X}_{m}$ (this identity like behavior of the pushforward of the projection is akin to what happens in the elliptic case, as seen in \cite[\S 2.4]{KingsLoefflerZerbes.ERL}). Piecing it all together yields

\begin{align*}
U_p'\cdot\mathbf{z}_{cp^{n},m}^{[\mathscr{U},j]}&=\cores_{F_{cp^{n+1}}/F_{cp^n}}\left(({\mathrm{pr}})_*\circ(\Phi_p^{-1})^*(\mathbf{z}^{1,[\mathscr{U},j]}_{cp^n,m})\right)\\
&=\cores_{F_{cp^{n+1}}/F_{cp^n}}\left(({\mathrm{pr}})_*(\mathbf{z}^{[\mathscr{U},j]}_{cp^{n+1},m+1})\right)\\
&=\cores_{F_{cp^{n+1}}/F_{cp^n}}\left(\mathbf{z}^{[\mathscr{U},j]}_{cp^{n+1},m}\right).\qedhere
\end{align*}
\end{proof}

\subsection{Big Galois representations}\label{subsec.BigGaloisRepresentations}
Let $\mathscr{f}$ be the Coleman family defined over $\mathscr{U}\subseteq\mathscr{W}$ passing through $f$ corresponding via the $p$-adic Jacquet--Langlands lift to $\mathscr{F}$.

\begin{proposition}\label{prop.BigGaloisRep}
After shrinking $\mathscr{U}$, there is an open disc $\widetilde{\mathscr{U}}\supseteq\mathscr{U}$ in $\mathscr{W}^{(1)}$ so that the $\mathcal{O}(\mathscr{U})$-module \[H^1_{\et}\left(X_1\otimes_{\Q}\overline{\Q},D_{\widetilde{\mathscr{U}},1}(1)\right)\widehat{\otimes}_{\Lambda_{\widetilde{\mathscr{U}}}[1/p]}\mathcal{O}(\mathscr{U})\] has a rank 2 maximal quotient $\mathbf{V}_{\mathscr{F}}^*$ interpolating the dual $p$-adic Galois representations of the specializations at integral weights of $\mathscr{F}$ in the following sense: for each $k\in\mathscr{U}\cap\Z_{\ge 0}$, there is an isomorphism of $L$-representations \[\mathbf{V}_{\mathscr{F}}^*/k\mathbf{V}_{\mathscr{F}}^*\cong V_{\mathcal{F}}^*,\] where $k$ is seen as a character in $\mathscr{W}$.
\end{proposition}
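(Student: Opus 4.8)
The plan is to adapt the construction of the big Galois representation in \cite[\S 4.5]{JetchevLoefflerZerbes} to the Shimura curve $X_1$: I would cut $\mathbf{V}_{\mathscr{F}}^*$ out of the overconvergent cohomology $H^1_{\et}(X_1\otimes_\Q\overline{\Q},D_{\widetilde{\mathscr{U}},1}(1))$ by a slope decomposition for $U_p$ followed by a Hecke localisation at the point of the eigencurve attached to $\mathscr{F}$, and then identify the specialisations by a control theorem. First I would fix the open disc $\widetilde{\mathscr{U}}\subseteq\mathscr{W}^{(1)}$, chosen small enough around $k_0$ that the distribution sheaves $D_{\widetilde{\mathscr{U}},1}$ and the overconvergent projectors of \cite[\S 5.2]{LoefflerZerbes.Rankin} are defined over it. The Hecke operator $U_p=(\hat{\mathrm{pr}})_*\circ(\Phi_p)^*\circ(\mathrm{pr})^*$ of the previous subsection acts on $M_{\widetilde{\mathscr{U}}}\defeq H^1_{\et}(X_1\otimes_\Q\overline{\Q},D_{\widetilde{\mathscr{U}},1}(1))$ as a compact operator commuting with the natural action of $\Gal(\overline{\Q}/\Q)$ and with the abstract Hecke algebra $\mathbb{T}$ generated by the operators $T'_\ell$ for $\ell\nmid Np$; compactness holds because $\Phi_p$ acts through $\smallmat{p}{0}{0}{1}$ and thereby improves the radius of convergence of the distributions, exactly as for overconvergent modular symbols. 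Applying the theory of compact operators on Banach modules (\cite{ColemanMazur}, \cite{Buzzard.Eigen}), after shrinking $\widetilde{\mathscr{U}}$ so that the Fredholm series of $U_p$ acquires a factor of slope $\le h$ --- where $h=v_p(\upalpha)$ is the slope of $\mathscr{F}$ --- I would obtain a $U_p$-, $\mathbb{T}$- and Galois-stable decomposition $M_{\widetilde{\mathscr{U}}}=M_{\widetilde{\mathscr{U}}}^{\le h}\oplus M_{\widetilde{\mathscr{U}}}^{>h}$ with $M_{\widetilde{\mathscr{U}}}^{\le h}$ finitely generated over $\Lambda_{\widetilde{\mathscr{U}}}[1/p]$, and then set $M\defeq M_{\widetilde{\mathscr{U}}}^{\le h}\widehat{\otimes}_{\Lambda_{\widetilde{\mathscr{U}}}[1/p]}\mathcal{O}(\mathscr{U})$ for a small affinoid disc $\mathscr{U}\subseteq\widetilde{\mathscr{U}}$, a finitely generated $\mathcal{O}(\mathscr{U})$-module with commuting actions of $\Gal(\overline{\Q}/\Q)$, $\mathbb{T}$ and $U_p$.

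Next I would localise at the Hecke eigensystem of $\mathscr{F}$. The hypotheses $v_p(\upalpha)<k_0-1$ and $\upalpha\ne\upbeta$ are Coleman's conditions ensuring that the point of $\mathcal{E}(N)$ attached to $f_0$, hence via the $p$-adic Jacquet--Langlands isomorphism $\mathscr{E}(N^+)\cong\mathcal{E}(N)$ of Proposition \ref{prop.padicJacquetLanglands} the point attached to $\mathcal{F}_0$, lies on a component which is \'etale over the weight space; see the discussion preceding Proposition \ref{prop.ColemanFamilies} together with \cite[\S 1.1]{Hansen} and \cite[\S 2.3]{Bellaiche}. Consequently, after shrinking $\mathscr{U}$ the Coleman family $\mathscr{F}$ gives a surjective $\mathcal{O}(\mathscr{U})$-algebra homomorphism $\lambda_{\mathscr{F}}\colon\mathbb{T}[U_p]\to\mathcal{O}(\mathscr{U})$, $T'_\ell\mapsto\mathbf{a}_\ell$ and $U_p\mapsto\mathbf{a}_p$, whose kernel is cut out on $M$ by an idempotent $e_{\mathscr{F}}$. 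I would then define $\mathbf{V}_{\mathscr{F}}^*\defeq e_{\mathscr{F}}M\otimes_{\mathbb{T}[U_p],\,\lambda_{\mathscr{F}}}\mathcal{O}(\mathscr{U})$, the maximal quotient of $M$ on which $\mathbb{T}[U_p]$ acts through $\lambda_{\mathscr{F}}$; being a quotient cut out by an idempotent from a finitely generated module which, after the \'etaleness input above and a further shrinking of $\mathscr{U}$, is free over the affinoid algebra $\mathcal{O}(\mathscr{U})$ (a disc), it is a projective $\mathcal{O}(\mathscr{U})$-module carrying a continuous action of $\Gal(\overline{\Q}/\Q)$.

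Finally I would compute the rank and verify the interpolation. Shrinking $\mathscr{U}$ once more around $k_0$ I may assume every $k\in\mathscr{U}\cap\Z_{\ge 0}$ satisfies $h<k-1$: a small disc around $k_0$ meets $\Z$ only in weights congruent to $k_0$ modulo a large power of $p$, all far exceeding $h+1$. For such $k$ the moment map $\mom^{2r}$ induces a specialisation $M/kM\to H^1_{\et}(X_1\otimes_\Q\overline{\Q},\mathscr{V}^{2r}_{\et}(1))^{\le h}$, an isomorphism by the control (classicality) theorem for overconvergent cohomology of slope $<k-1$; passing to the $\lambda_{\mathscr{F}}$-isotypic quotient, using that $\mathscr{F}$ specialises at weight $k$ to $\mathcal{F}_k$ (Corollary \ref{cor.QuaternionicColemanFamilies}) and the description of $V_{\mathcal{F}_k}^*$ in \S\ref{subsec.GaloisRepresentations} as the maximal quotient of $H^1_{\et}(X_1\otimes_\Q\overline{\Q},\mathscr{V}^{2r}_{\et}(1))$ on which $T'_\ell$ acts by $a_\ell(\mathcal{F}_k)$ and $U_p$ by $a_p(\mathcal{F}_k)$, I would obtain a Galois-equivariant isomorphism $\mathbf{V}_{\mathscr{F}}^*/k\mathbf{V}_{\mathscr{F}}^*\cong V_{\mathcal{F}_k}^*$. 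Since $V_{\mathcal{F}_k}^*$ is two-dimensional over $L$ and the rank of a projective module over the connected affinoid $\mathscr{U}$ is constant, $\mathbf{V}_{\mathscr{F}}^*$ has rank $2$ throughout and the displayed isomorphism holds at every $k\in\mathscr{U}\cap\Z_{\ge 0}$.

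The hard part will be the analytic inputs of the first and third steps: the compactness of $U_p$ and the small-slope control theorem have to be established for the overconvergent cohomology of the Shimura curve $X_1$ with coefficients in $D_{\widetilde{\mathscr{U}},1}$, rather than quoted from the elliptic literature. These are adaptations of the work of Ash--Stevens, Coleman and Andreatta--Iovita--Stevens, together with \cite[\S 4--5]{LoefflerZerbes.Rankin}; they are made available in the quaternionic setting by the compact PEL moduli descriptions of \S\ref{subsec.Moduli} and by Proposition \ref{prop.padicJacquetLanglands}, which guarantees that the resulting Hecke module is governed by the Coleman--Mazur eigencurve, so that the \'etaleness needed in the second step is at our disposal.
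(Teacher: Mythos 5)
Your proposal is viable, but it follows a genuinely different route from the paper. The paper's own proof is a one-line reduction: via the $p$-adic Jacquet--Langlands correspondence (Proposition \ref{prop.padicJacquetLanglands}) and the identification of $V_{\mathcal{F}}$ with $V_f$ in \S\ref{subsec.GaloisRepresentations}, the statement is transferred to the elliptic Coleman family $\mathscr{f}$, where it is exactly \cite[Theorem 4.6.6]{LoefflerZerbes.Rankin}. You instead propose to rebuild that theorem directly on the quaternionic side: compactness of $U_p$ on $H^1_{\et}(X_1\otimes_\Q\overline{\Q},D_{\widetilde{\mathscr{U}},1}(1))$, a slope-$\le h$ decomposition, Hecke localisation at $\lambda_{\mathscr{F}}$ using \'etaleness of the eigencurve over weight space at the noncritical point $\mathcal{F}_0$, and a small-slope control theorem to identify the weight-$k$ fibres with $V_{\mathcal{F}_k}^*$ (your observation that shrinking $\mathscr{U}$ forces every $k\in\mathscr{U}\cap\Z_{\ge 0}$ to satisfy $h<k-1$ is correct, since such $k$ are $\equiv k_0 \bmod p^M$ and hence $\ge k_0$). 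What your route buys is independence from the $p$-adic Jacquet--Langlands input and an intrinsic description of $\mathbf{V}_{\mathscr{F}}^*$ inside the quaternionic overconvergent cohomology; what it costs is precisely the block of analytic results you defer to ``adaptations of Ash--Stevens, Coleman and Andreatta--Iovita--Stevens'': the construction of the \'etale distribution sheaves, compactness of $U_p$ and slope decompositions, and the classicality theorem for the compact Shimura curve $X_1$ are not quoted results in this paper, and establishing them is essentially a quaternionic rerun of \cite[\S 4]{LoefflerZerbes.Rankin}. The paper's reduction is designed to avoid exactly this work (at the price of relying on Chenevier's eigencurve isomorphism plus irreducibility of $V_f$ to transport the Galois-module structure), so if you pursue your route you should either supply those inputs or locate references covering the compact quaternionic case; as a proof sketch it is sound, but those steps are where the real content lies.
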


\begin{proof}
Using the $p$-adic Jacquet--Langlands correspondence (Proposition \ref{prop.padicJacquetLanglands}), this reduces to the equivalent statement about $\mathscr{f}$, which is \cite[Theorem 4.6.6]{LoefflerZerbes.Rankin} (taking into account our conventions stablished in \S\ref{subsec.GaloisRepresentations}). 
\end{proof}

\begin{definition}\label{def.BigGaloisRep}
The $\mathcal{O}(\mathscr{U})$-module $\mathbf{V}_\mathscr{F}^*$ from Proposition \ref{prop.BigGaloisRep} is the \textit{dual big Galois representation} associated to $\mathscr{F}$, and its linear dual $\mathbf{V}_\mathscr{F}\cong\mathbf{V}_\mathscr{F}^*(1)$ is the \textit{big Galois representation} associated to $\mathscr{F}$.
\end{definition}

The constructions leading up to Definition \ref{def.jBigHeegnerClass} can be redone for $\widetilde{\mathscr{U}}$, yielding a class $\mathbf{z}^{[\widetilde{\mathscr{U}},j]}_{cp^n,1}$ for $0\le j\le 2r$. The natural inclusion $\mathscr{U}\subseteq\widetilde{\mathscr{U}}$ induces for all $s\in\Z_{\ge 0}$ a restriction map $A_{\widetilde{\mathscr{U}},s}\rightarrow A_{\mathscr{U},s}$ in the level of continuous slope $s$ functions, and a dual map $D_{\mathscr{U},s}\to D_{\widetilde{\mathscr{U}},s}$ in the level of distributions. Under the induced dual map in the level of cohomologies, the class $\mathbf{z}^{[{\mathscr{U}},j]}_{cp^n,1}$ maps to $\mathbf{z}^{[\widetilde{\mathscr{U}},j]}_{cp^n,1}$.

Similarly to \S\ref{subsec.ClassesModularForms}, the projection given in Proposition \ref{prop.BigGaloisRep} induces a map
\begin{equation}\label{eq.ProjectionBigGaloisRep}
\mathrm{pr}^{[\mathscr{F},j]}\colon H^1\left(F_{cp^n},D_{\widetilde{\mathscr{U}},1}(1)\otimes\sigma^{\kappa_{\widetilde{\mathscr{U}}}-j}_{\et}\bar{\sigma}^{j}_{\et}\right)\longrightarrow H^1\left(F_{cp^n},\mathbf{V}_{\mathscr{F}}^\ast\otimes\sigma_{\et}^{\kappa_{\widetilde{\mathscr{U}}}-j}\bar\sigma^j_{\et}\right).
\end{equation}
The reason one extends to $\widetilde{\mathscr{U}}$ in the first place is to get big Heegner classes that are divisible by the operator \[\binom{\nabla}{j}\defeq\dfrac{1}{j!}\prod_{i=0}^{j-1}(\nabla-i)\]   (\textit{cf.} \textit{ibid.} Proposition 5.2.5), which acts invertibly on $\Lambda_{\widetilde{\mathscr{U}}}[1/p]$ after shrinking $\widetilde{\mathscr{U}}$ enough so to avoid all integers $0,\dots,j-1$ (\textit{ibid.}, Remark 5.2.2) and specializes at weight $k$ to $\binom{2r}{j}$. This will be fundamental to control the growth of these classes when projected into the Coleman family $\mathscr{F}$ in the proof of Proposition \ref{prop.BigHeegnerClass} to follow; such growth is due to the slope of $\mathscr{F}$. This motivates adding a factor of $\binom{\nabla}{j}^{-1}$ to the map above, leading to the following definition:

\begin{definition}
The \emph{$j$-component} of the big generalized Heegner class is 
\[\mathbf{z}^{[\mathscr{F},j]}_{cp^n,1}\defeq\binom{\nabla}{j}^{-1}\mathrm{pr}^{[\mathscr{F},j]}\left(\mathbf{z}^{[\widetilde{\mathscr{U}},j]}_{cp^n,1}\right).\]
\end{definition}

These classes satisfy the following Euler relations:

\begin{proposition}\label{prop.EulerRelationsBig}
For all $0\le j\le 2r$ and $n\ge 1$, we have \[\norm_{F_{cp^{n+1}}/F_{cp^{n}}}\left(\mathbf{z}^{[\mathscr{F},j]}_{cp^{n+1},1}\right)=\mathbf{a}_p\cdot\mathbf{z}^{[\mathscr{F},j]}_{cp^n,1}.\]
\end{proposition}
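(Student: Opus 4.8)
The plan is to deduce this from the unprojected Euler relation of Proposition~\ref{prop.EulerRelations} together with the explicit description of $U_p'$ and the interpolation property of the dual big Galois representation. Recall from Proposition~\ref{prop.EulerRelations} that $\cores_{F_{cp^{n+1}}/F_{cp^n}}(\mathbf{z}_{cp^{n+1},m}^{[\mathscr{U},j]})=U_p'\cdot\mathbf{z}_{cp^{n},m}^{[\mathscr{U},j]}$ for $m=1$, and that the corestriction map is (up to a normalization) the same as the norm map $\norm_{F_{cp^{n+1}}/F_{cp^n}}$. First I would transport this relation along the chain of maps defining $\mathbf{z}^{[\mathscr{F},j]}_{cp^n,1}$: the restriction $D_{\mathscr{U},1}\to D_{\widetilde{\mathscr{U}},1}$ from \S\ref{subsec.BigGaloisRepresentations}, the projection $\mathrm{pr}^{[\mathscr{F},j]}$ from \eqref{eq.ProjectionBigGaloisRep}, and multiplication by $\binom{\nabla}{j}^{-1}$. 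The first two maps are Galois-equivariant morphisms of coefficient modules, hence commute with corestriction; the operator $\binom{\nabla}{j}^{-1}$ acts on the coefficient ring $\Lambda_{\widetilde{\mathscr{U}}}[1/p]$ (after shrinking) and likewise commutes with corestriction. So all three pass through, yielding
\[
\norm_{F_{cp^{n+1}}/F_{cp^n}}\left(\mathbf{z}^{[\mathscr{F},j]}_{cp^{n+1},1}\right)=\binom{\nabla}{j}^{-1}\mathrm{pr}^{[\mathscr{F},j]}\left(U_p'\cdot\mathbf{z}^{[\widetilde{\mathscr{U}},j]}_{cp^n,1}\right).
\]

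The key remaining point is that $U_p'$ acts on the quotient $\mathbf{V}_{\mathscr{F}}^*$ — equivalently, on the image of $\mathrm{pr}^{[\mathscr{F},j]}$ — as multiplication by $\mathbf{a}_p\in\mathscr{A}_{\mathscr{U}}^\circ\otimes L$, the $U_p$-eigenvalue function of the Coleman family $\mathscr{F}$. This is where Proposition~\ref{prop.BigGaloisRep} enters: the dual big Galois representation is characterized as the maximal quotient on which the dual Hecke operators act by the family's eigenvalues, so $U_p'$ acts by $\mathbf{a}_p$ by construction (this is the analogue, in families, of the statement in \S\ref{subsec.GaloisRepresentations} that $U_p$ acts by $a_p(\mathcal{F})$ on $V_\mathcal{F}^*$). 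Since $\mathrm{pr}^{[\mathscr{F},j]}$ is a map of $\Gal$-modules whose target carries this $U_p'$-action, and $U_p'$ commutes with $\binom{\nabla}{j}^{-1}$, we get
\[
\binom{\nabla}{j}^{-1}\mathrm{pr}^{[\mathscr{F},j]}\left(U_p'\cdot\mathbf{z}^{[\widetilde{\mathscr{U}},j]}_{cp^n,1}\right)=\mathbf{a}_p\cdot\binom{\nabla}{j}^{-1}\mathrm{pr}^{[\mathscr{F},j]}\left(\mathbf{z}^{[\widetilde{\mathscr{U}},j]}_{cp^n,1}\right)=\mathbf{a}_p\cdot\mathbf{z}^{[\mathscr{F},j]}_{cp^n,1},
\]
which is the desired identity.

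The step I expect to require the most care is the commutation of $U_p'$ with $\mathrm{pr}^{[\mathscr{F},j]}$ and the identification of its action as $\mathbf{a}_p$: one must check that the Hecke operator $U_p'=(\mathrm{pr})_*\circ(\Phi_p^{-1})^*\circ(\hat{\mathrm{pr}})^*$ defined geometrically on the cohomology of the Shimura curves (as in \S\ref{subsec.BigGaloisRepresentations} and the discussion preceding Proposition~\ref{prop.EulerRelations}) is compatible with the abstract Hecke action used to cut out $\mathbf{V}_{\mathscr{F}}^*$ in Proposition~\ref{prop.BigGaloisRep}. For the single-form case this is the content of \S\ref{subsec.GaloisRepresentations}; in the family case it follows from the corresponding statement for $\mathscr{f}$ via the $p$-adic Jacquet--Langlands correspondence (Proposition~\ref{prop.padicJacquetLanglands}), exactly as in \cite[Theorem~4.6.6]{LoefflerZerbes.Rankin} and its companion \cite[Proposition~5.1.2]{JetchevLoefflerZerbes}, which is the elliptic analogue of the present proposition. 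A minor additional bookkeeping point is tracking the normalization constant relating $\cores$ and $\norm$, and checking that the restriction map $D_{\mathscr{U},1}\to D_{\widetilde{\mathscr{U}},1}$ does not disturb it; neither is conceptually serious.
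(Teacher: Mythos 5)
Your proposal is correct and follows essentially the same route as the paper, which deduces the statement directly from Proposition \ref{prop.EulerRelations} together with the fact that $U_p'$ acts on the dual big Galois representation $\mathbf{V}_{\mathscr{F}}^*$ by the eigenvalue $\mathbf{a}_p$. Your additional remarks — that the restriction $D_{\mathscr{U},1}\to D_{\widetilde{\mathscr{U}},1}$, the projection $\mathrm{pr}^{[\mathscr{F},j]}$ and the operator $\binom{\nabla}{j}^{-1}$ all commute with corestriction — simply spell out the details the paper leaves implicit in the phrase ``follows directly.''
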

\begin{proof}
This follows directly from Proposition \ref{prop.EulerRelations} and the fact that, by definition, $U_p'$ acts over the dual Galois representation by the eigenvalue $\mathbf{a}_p$ (\textit{cf.} \S\ref{subsec.GaloisRepresentations}).
\end{proof}

\begin{proposition}\label{prop.SpecializationNabla}
Let $k\in {\mathscr{U}}\cap\Z$ with $k-2\ge j$. The specialization at weight $k$ of $\mathbf{z}^{[\mathscr{F},j]}_{cp^n,1}$ is $\binom{2r}{j}^{-1}z^{[\mathcal{F}_k,j]}_{cp^n,1}\in H^1(F_{cp^n}, V_{\mathcal{F}_k}^\ast\otimes\sigma_{\et}^{2r-j}\bar{\sigma}_{\et}^j)$.
\end{proposition}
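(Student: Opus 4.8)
The plan is to unwind all the definitions and reduce the statement to a compatibility of moment maps with the projections to (big) Galois representations. First I would recall that $\mathbf{z}^{[\mathscr{F},j]}_{cp^n,1}=\binom{\nabla}{j}^{-1}\mathrm{pr}^{[\mathscr{F},j]}(\mathbf{z}^{[\widetilde{\mathscr{U}},j]}_{cp^n,1})$, and that $\mathbf{z}^{[\widetilde{\mathscr{U}},j]}_{cp^n,1}$ is the image of $\mathbf{z}^{[\mathscr{U},j]}_{cp^n,1}$ under the restriction map $D_{\mathscr{U},1}\to D_{\widetilde{\mathscr{U}},1}$ coming from $\mathscr{U}\subseteq\widetilde{\mathscr{U}}$. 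Specialization at weight $k$ means applying the map induced by $\Lambda_{\widetilde{\mathscr{U}}}[1/p]\to L$ sending the universal character to $k$ (equivalently, $\kappa_{\widetilde{\mathscr{U}}}\mapsto k-2$), and the point is that this specialization map, at the level of distributions, is precisely $\mathrm{mom}^{2r}\colon D_{\widetilde{\mathscr{U}},1}\to\TSym^{2r}((L^2)^\vee)$ (up to the restriction from $\widetilde{\mathscr{U}}$ to $\mathscr{U}$, which is compatible with moment maps since it is dual to a restriction of function spaces).

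The next step is to check the three pieces of the construction are each compatible with $\mathrm{mom}^{2r}$. For the Gysin map this is exactly Diagram \eqref{eq.DiagramInterpolationGHCs}, which gives $\mathrm{mom}^{2r}(\mathbf{z}^{[\mathscr{U},j]}_{cp^n,1})=z^{[k,j]}_{cp^n,1}$ (the Proposition already proved just before). For the projection $\mathrm{pr}^{[\mathscr{F},j]}$ to $\mathbf{V}_{\mathscr{F}}^*$, I would invoke Proposition \ref{prop.BigGaloisRep}: since $\mathbf{V}_{\mathscr{F}}^*$ is the maximal rank $2$ quotient of $H^1_{\et}(X_1\otimes\overline{\Q},D_{\widetilde{\mathscr{U}},1}(1))\widehat\otimes\mathcal{O}(\mathscr{U})$ interpolating the $V_{\mathcal{F}_k}^*$, the reduction mod $k$ of $\mathbf{V}_{\mathscr{F}}^*$ is $V_{\mathcal{F}_k}^*$ and the projection $\mathrm{pr}^{[\mathscr{F},j]}$ reduces mod $k$ to the projection $\mathrm{pr}^{[\mathcal{F}_k,j]}$ of \S\ref{subsec.ClassesModularForms} (after $\mathrm{mom}^{2r}$ on coefficients), by construction of the big representation as a quotient compatible with weight specialization. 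Composing, $\mathrm{mom}^{2r}\circ\mathrm{pr}^{[\mathscr{F},j]}(\mathbf{z}^{[\widetilde{\mathscr{U}},j]}_{cp^n,1})$ specializes to $\mathrm{pr}^{[\mathcal{F}_k,j]}(z^{[k,j]}_{cp^n,1})=z^{[\mathcal{F}_k,j]}_{cp^n,1}$.

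Finally I would handle the binomial factor: by \cite[Proposition 5.2.5]{LoefflerZerbes.Rankin} (cited in the excerpt), the operator $\binom{\nabla}{j}$ specializes at weight $k=2r+2$ to $\binom{2r}{j}$, and since $k-2\ge j$ ensures $\binom{2r}{j}\neq 0$ while $\binom{\nabla}{j}$ acts invertibly on $\Lambda_{\widetilde{\mathscr{U}}}[1/p]$ after the shrinking of $\widetilde{\mathscr{U}}$, the specialization of $\binom{\nabla}{j}^{-1}$ is $\binom{2r}{j}^{-1}$; this is where the hypothesis $k-2\ge j$ is used. Putting the three compatibilities together gives that the weight-$k$ specialization of $\mathbf{z}^{[\mathscr{F},j]}_{cp^n,1}$ equals $\binom{2r}{j}^{-1}z^{[\mathcal{F}_k,j]}_{cp^n,1}$, landing in $H^1(F_{cp^n},V_{\mathcal{F}_k}^*\otimes\sigma_{\et}^{2r-j}\bar\sigma_{\et}^j)$ as asserted. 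The main obstacle I anticipate is the second step: making precise that the abstract maximal-quotient construction of $\mathbf{V}_{\mathscr{F}}^*$ in Proposition \ref{prop.BigGaloisRep} is genuinely compatible under weight specialization with the Hecke-eigenspace projection defining $V_{\mathcal{F}_k}^*$, i.e.\ that forming the maximal rank $2$ quotient commutes with $-\otimes_{\mathcal{O}(\mathscr{U})}L$ at the good weights $k$; this is where one really leans on \cite[Theorem 4.6.6]{LoefflerZerbes.Rankin} via the $p$-adic Jacquet--Langlands correspondence, and care is needed that the $U_p'$-action and the conventions of \S\ref{subsec.GaloisRepresentations} are tracked correctly through the specialization.
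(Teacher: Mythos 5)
Your proposal is correct and follows essentially the same route as the paper's own (much terser) proof: unwind the definition $\mathbf{z}^{[\mathscr{F},j]}_{cp^n,1}=\binom{\nabla}{j}^{-1}\mathrm{pr}^{[\mathscr{F},j]}(\mathbf{z}^{[\widetilde{\mathscr{U}},j]}_{cp^n,1})$, use the preceding proposition to identify the weight-$k$ specialization of $\mathbf{z}^{[\widetilde{\mathscr{U}},j]}_{cp^n,1}$ with $z^{[k,j]}_{cp^n,1}$ via the moment map, note that $\mathrm{pr}^{[\mathscr{F},j]}$ specializes to $\mathrm{pr}^{[\mathcal{F}_k,j]}$ by the construction of $\mathbf{V}_{\mathscr{F}}^*$ in Proposition \ref{prop.BigGaloisRep}, and specialize $\binom{\nabla}{j}$ to $\binom{2r}{j}$. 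The paper compresses all of this into ``by construction,'' so your version merely makes explicit the same compatibilities.
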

\begin{proof}
By construction, the specialization at weight $k$ of $\mathbf{z}^{[\mathscr{F},j]}_{cp^n,1}$ is the image of the specialization at weight $k$ of $\binom{\nabla}{j}^{-1}\mathbf{z}^{[\widetilde{\mathscr{U}},j]}_{cp^n,1}$ under $\mathrm{pr}^{[\mathcal{F}_k,j]}$, which is exactly $\binom{2r}{j}^{-1}z^{[\mathcal{F}_k,j]}_{cp^n,1}$.
\end{proof}

\subsection{Definition of big generalized Heegner classes}
The remaining step towards the general definition is the interpolation of the characters $\sigma^{-j}\bar\sigma^j$ for $j\geq 0$.

Let $F_{cp^\infty}=\bigcup_{n=1}^\infty F_{cp^n}$. Since $H_{cp^n}\cap F_{\mathfrak{N}^+}=K$ for all $n\ge 0$, $\Gamma_\infty\defeq\Gal(F_{cp^\infty}/F_{c})$ is isomorphic to $\Gal(H_{cp^\infty}/H_{c})$. 
For any integer $n\geq 1$, let $\Gamma_n$ denote the subgroup 
$\Gal(F_{cp^\infty}/F_{cp^n})\cong\Gal(H_{cp^\infty}/H_{cp^n})$ of $\Gamma_\infty$. Via the Artin reciprocity map, there is an isomorphism 
$\Gamma_{1}\cong (\mathcal{O}_{K}\otimes\Z_p)^\times/\Z_p^\times,$
so the character $\sigma/\bar\sigma\colon  (\mathcal{O}_{K}\otimes\Z_p)^\times/\Z_p^\times\rightarrow \Z_p^\times$ induces a Galois character 
$\sigma_{\et}/\bar\sigma_{\et}\colon \Gamma_{1}\rightarrow\Z_p^\times$.

The Iwasawa algebra $\Lambda_{\Gamma_1}\defeq\Z_p\pwseries{\Gamma_{1}}$ can be seen as a $\Gal(\overline{\Q}/F_{cp})$-module via the canonical projection into $\Gamma_1$, which embeds as group-like elements in $\Lambda_{\Gamma_1}$. Twisting $\Lambda_{\Gamma_1}$ by the inverse of 
$\sigma_{\et}/\bar\sigma_{\et}$ gives a Galois module $\Lambda_{\Gamma_1}(\bar\sigma_{\et}/\sigma_{\et})$, whose elements can be seen as  $\Gal(\overline{\Q}/F_{cp})$-representations. Define the specialization at $j\in\Z_{\ge 0}$ to be the map

\begin{equation}\label{eq.jSpecialization}
\Lambda_{\Gamma_1}(\bar\sigma_{\et}/\sigma_{\et})
\longrightarrow \overline{\Q}\sigma_{\et}^{-j}\bar\sigma_{\et}^{j}\colon
\mu\mapsto \left(\int_{\Gamma_{n}}\ \mathrm{d}\mu\right)(\bar\sigma_{\et}/\sigma_{\et})^j
\end{equation}

Define $\sigma^{\kappa_{{\mathscr{U}}}-\mathbf{j}}\bar\sigma^{\mathbf{j}}\defeq\sigma^{\kappa_{{\mathscr{U}}}}\widehat\otimes_{\Z_p}\Lambda_{\Gamma_1}(\bar\sigma_{\et}/\sigma_{\et})$. Map \eqref{eq.jSpecialization} induces correspondent specialization at $j$ maps for $\sigma^{\kappa_{{\mathscr{U}}}-\mathbf{j}}\bar\sigma^{\mathbf{j}}$ and therefore a map
\begin{equation}\label{momj}
\mathrm{mom}_{j,n}\colon H^1\left(F_{cp},\mathbf{V}_{\mathscr{F}}^\ast\widehat\otimes_{\Z_p}
\sigma_{\et}^{\kappa_{{\mathscr{U}}}-\mathbf{j}}\bar\sigma_{\et}^{\mathbf{j}}
\right)\longrightarrow H^1(F_{cp},\mathbf{V}_{\mathscr{F}}^\ast\otimes\sigma_{\et}^{\kappa_{{\mathscr{U}}}-j}\bar\sigma_{\et}^j).
\end{equation}
which can be composed with $\norm_{F_{cp^n}/F_{cp}}$ so to land in the same field of definition $F_{cp}$.

\begin{proposition}\label{prop.BigHeegnerClass}
There exists
$\mathbf{z}_{cp}^{[\mathscr{F},\mathbf{j}]}\in H^1\left(F_{cp},\mathbf{V}_{\mathscr{F}}^\ast\widehat\otimes_{\Z_p}
\sigma_{\et}^{\kappa_{{\mathscr{U}}}-\mathbf{j}}\bar\sigma_{\et}^{\mathbf{j}}\right)$ 
such that, for all $n\geq 1$ and all $j\geq 0$, $\mathrm{mom}_{j,n}(\mathbf{z}_{cp}^{[\mathscr{F},\mathbf{j}]})=\mathbf{a}_p^{-n}\mathbf{z}^{[\mathscr{F},j]}_{cp^n,1}$. 
\end{proposition}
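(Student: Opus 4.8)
The plan is to assemble $\mathbf{z}^{[\mathscr{F},\mathbf{j}]}_{cp}$ from the norm-compatible system formed by the $j$-components $\mathbf{z}^{[\mathscr{F},j]}_{cp^n,1}$, by passing to an inverse limit over the tower $F_{cp^\infty}/F_{cp}$, exactly along the lines of the elliptic construction in \cite[\S5]{JetchevLoefflerZerbes} (to which one can alternatively reduce through the $p$-adic Jacquet--Langlands correspondence of Proposition \ref{prop.padicJacquetLanglands}). Since $\mathscr{F}$ has finite slope, $\mathbf{a}_p$ is invertible on $\mathscr{U}$, so the renormalised classes $\mathbf{c}^{[j]}_n\defeq\mathbf{a}_p^{-n}\mathbf{z}^{[\mathscr{F},j]}_{cp^n,1}\in H^1\!\left(F_{cp^n},\mathbf{V}_\mathscr{F}^\ast\otimes\sigma_\text{ét}^{\kappa_\mathscr{U}-j}\bar\sigma_\text{ét}^{j}\right)$ are well defined, and Proposition \ref{prop.EulerRelationsBig} yields $\norm_{F_{cp^{n+1}}/F_{cp^n}}(\mathbf{c}^{[j]}_{n+1})=\mathbf{c}^{[j]}_n$ for every $n\ge 1$ and every $0\le j\le 2r$. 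Thus, for each fixed $j$, the classes $\mathbf{c}^{[j]}_n$ form a norm-compatible system along the tower.

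The first step is to pass this system to the limit using Shapiro's lemma in the Iwasawa-cohomology form $\varprojlim_n H^1(F_{cp^n},T)\cong H^1\!\left(F_{cp},T\,\widehat\otimes_{\Z_p}\Lambda_{\Gamma_1}\right)$, the limit over corestrictions, applied with $T=\mathbf{V}_\mathscr{F}^\ast\otimes\sigma_\text{ét}^{\kappa_\mathscr{U}-j}\bar\sigma_\text{ét}^{j}$; one works throughout with completed tensor products, which is harmless since (after shrinking $\mathscr{U}$) $\mathbf{V}_\mathscr{F}^\ast$ is free of rank $2$ over $\mathcal{O}(\mathscr{U})$ by Proposition \ref{prop.BigGaloisRep}. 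This produces, for each $j$, a class whose image under the projection $\Lambda_{\Gamma_1}\twoheadrightarrow\Z_p[\Gamma_1/\Gamma_n]$ followed by the Shapiro identification $H^1(F_{cp},T\otimes\Z_p[\Gamma_1/\Gamma_n])\cong H^1(F_{cp^n},T)$ is exactly $\mathbf{c}^{[j]}_n$, and whose image under the augmentation $\Z_p[\Gamma_1/\Gamma_n]\to\Z_p$ is $\norm_{F_{cp^n}/F_{cp}}(\mathbf{c}^{[j]}_n)$ — which is what the specialisation \eqref{eq.jSpecialization} at $j$ followed by the norm down to $F_{cp}$, i.e. the map $\mathrm{mom}_{j,n}$, computes.

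It remains to see that these tower-limit classes, one for each $j$, are the $j$-specialisations of a single class with coefficients in $\mathbf{V}_\mathscr{F}^\ast\,\widehat\otimes_{\Z_p}\sigma_\text{ét}^{\kappa_\mathscr{U}-\mathbf{j}}\bar\sigma_\text{ét}^{\mathbf{j}}=\mathbf{V}_\mathscr{F}^\ast\,\widehat\otimes_{\Z_p}\sigma_\text{ét}^{\kappa_\mathscr{U}}\,\widehat\otimes_{\Z_p}\Lambda_{\Gamma_1}(\bar\sigma_\text{ét}/\sigma_\text{ét})$. Here one uses the construction of the $j$-components: all the big basis vectors $\boldsymbol{\mathrm{e}}^{[\mathscr{U},j]}_{cp^n}$ arise from the single distribution $\boldsymbol{\mathrm{e}}^{[\mathscr{U},0]}_{cp^n}$ by applying the overconvergent projector to its $\odot$-product with $(e_{cp^n}^{-1}\odot\bar e_{cp^n})^{j}$, and $e_{cp^n}^{-1}\odot\bar e_{cp^n}$ is an eigenvector for $i_{cp^n}((\mathcal{O}_{cp^n}\otimes\Z_p)^\times)$ with character $\sigma_L/\bar\sigma_L$ — precisely the character of $\Gamma_1\cong(\mathcal{O}_K\otimes\Z_p)^\times/\Z_p^\times$ whose universal ``deformation'' over $\Lambda_{\Gamma_1}$ is $\Lambda_{\Gamma_1}(\bar\sigma_\text{ét}/\sigma_\text{ét})$. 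Consequently the tower-limit classes for $j$ and $j'$ differ by the twisting automorphism of $\Lambda_{\Gamma_1}$ attached to $(\bar\sigma_\text{ét}/\sigma_\text{ét})^{j-j'}$, and the standard Iwasawa-theoretic twisting formalism repackages the collection $\{\mathbf{c}^{[j]}_n\}_{n,j}$ into a single $\mathbf{z}^{[\mathscr{F},\mathbf{j}]}_{cp}\in H^1\!\left(F_{cp},\mathbf{V}_\mathscr{F}^\ast\,\widehat\otimes_{\Z_p}\sigma_\text{ét}^{\kappa_\mathscr{U}-\mathbf{j}}\bar\sigma_\text{ét}^{\mathbf{j}}\right)$; by construction of $\mathrm{mom}_{j,n}$ one then reads off $\mathrm{mom}_{j,n}(\mathbf{z}^{[\mathscr{F},\mathbf{j}]}_{cp})=\mathbf{a}_p^{-n}\mathbf{z}^{[\mathscr{F},j]}_{cp^n,1}$ for all $n\ge 1$ and $j\ge 0$.

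The main obstacle is the last point: one must check that the $\binom{\nabla}{j}^{-1}$-normalisations built into the $\mathbf{z}^{[\mathscr{F},j]}_{cp^n,1}$ are compatible across $j$ with the twisting by $\Lambda_{\Gamma_1}(\bar\sigma_\text{ét}/\sigma_\text{ét})$, so that the assembled class genuinely has coefficients in $\Lambda_{\Gamma_1}(\bar\sigma_\text{ét}/\sigma_\text{ét})$ rather than in $\Lambda_{\Gamma_1}(\bar\sigma_\text{ét}/\sigma_\text{ét})[1/p]$ with denominators growing in $j$. This is precisely the reason for passing to the larger disc $\widetilde{\mathscr{U}}$ and for the explicit bounds on the overconvergent projector recalled before Corollary \ref{cor.BigBaseVectorsj}, and it is controlled exactly as in \cite[\S5]{JetchevLoefflerZerbes}.
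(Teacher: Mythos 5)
There is a genuine gap, and it sits exactly where you defer to ``controlled exactly as in \cite[\S5]{JetchevLoefflerZerbes}''. Your first step already fails in the finite-slope setting: the identification $\varprojlim_n H^1(F_{cp^n},T)\cong H^1\left(F_{cp},T\,\widehat\otimes_{\Z_p}\Lambda_{\Gamma_1}\right)$ only captures \emph{bounded} (integral) norm-compatible systems, since $T\,\widehat\otimes_{\Z_p}\Lambda_{\Gamma_1}$ consists of bounded measures valued in $T$. Because $\mathscr{F}$ has slope $\lambda=\log_p\|\mathbf{a}_p^{-1}\|>0$, the renormalised classes $\mathbf{a}_p^{-n}\mathbf{z}^{[\mathscr{F},j]}_{cp^n,1}$ have norms growing like $p^{\lambda n}$, so the corestriction-compatible system they form does \emph{not} come from a $\Lambda_{\Gamma_1}$-adic class; this is precisely why the paper does not take an inverse limit but instead invokes the Amice--V\'elu-type criterion \cite[Proposition 2.3.3]{LoefflerZerbes.Rankin}, which produces a class valued in slope-$\lambda$ distributions on $\Gamma_1$ provided a quantitative growth hypothesis holds. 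That hypothesis is the bound $\left\|p^{-2rn}\sum_{j=0}^{2r}(-1)^j\binom{2r}{j}\Res_{F_{cp^\infty}/F_{cp^n}}(c_{n,j})\right\|\le Cp^{\lambda n}$, and verifying it is the actual content of the proof: it uses Lemma \ref{lem.vartheta} (the congruence $e_{cp^n}\equiv\bar e_{cp^n}\bmod p^n$, giving $(e_{cp^n}-\bar e_{cp^n})^{\odot 2r}\in p^{2rn}\TSym^{2r}((\mathcal{O}_L^2)^\vee)$), the bounded denominators of the overconvergent projector, and the identity $\binom{\nabla}{j}\binom{\nabla-j}{2r-j}=\binom{2r}{j}\binom{\nabla}{2r}$, together with $\lambda<k-1=2r+1$ so that the gain of $p^{2rn}$ beats the loss from $\mathbf{a}_p^{-n}$. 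None of this appears in your argument.

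Your proposed gluing across $j$ is also not a proof. The classes $\mathbf{z}^{[\mathscr{F},j]}_{cp^n,1}$ for distinct $j$ are not formal twists of one another by $(\bar\sigma_{\et}/\sigma_{\et})^{j-j'}$: they arise from genuinely different basis vectors $e_{cp^n}^{\odot(2r-j)}\odot\bar e_{cp^n}^{\odot j}$, and the existence of a single distribution-valued class whose specialisations at the characters $(\bar\sigma_{\et}/\sigma_{\et})^{j}$ recover all of them is exactly the interpolation statement being proved; if it followed from ``standard Iwasawa-theoretic twisting formalism'' the growth condition would be superfluous, which it is not. (Two further points handled in the paper and absent from your sketch: the descent of the resulting class from $F_{cp^\infty}$ to $F_{cp}$ uses $H^0(F_{cp^\infty},\mathbf{V}_\mathscr{F}^\ast\otimes\sigma^{\kappa_\mathscr{U}})=0$ plus inflation-restriction, and the $\binom{\nabla}{2r}$-renormalisation of the $c_{n,j}$ must be tracked so that the final interpolation formula yields the bare factor $\mathbf{a}_p^{-n}$.) As written, your argument would prove the statement even in situations where it is false, so the missing growth estimate is not a technicality but the core of the proposition.
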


\begin{proof}
The existence of the class will follow from a general construction of Loeffler--Zerbes, namely \cite[Proposition 2.3.3]{LoefflerZerbes.Rankin}. To check the conditions for it, endow $\mathcal{O}(\mathscr{U})$ with the supremum norm and take $\lambda$ to be the slope of $\mathscr{F}$, that is $\log_p(\|\mathbf{a}_p^{-1}\|)$, a positive real number which by hypothesis is strictly smaller than $k-1$, thus $2r\ge\lfloor\lambda\rfloor$. Define the classes \[c_{n,j}\defeq \mathbf{a}_p^{-n}\binom{\nabla}{2r}\mathbf{z}^{[\mathscr{F},j]}_{cp^n,1}\in H^1(F_{cp^\infty},\mathbf{V}_\mathscr{F}^\ast{\otimes}\sigma^{\kappa_{{\mathscr{U}}}}),\]
considered over $F_{cp^\infty}$ where the characters $\sigma_{\et}$ and $\bar{\sigma}_{\et}$ are trivial. For the first condition in \textit{loc. cit.}, it suffices to check that \[\norm_{F_{cp^{n+1}}/F_{cp^n}}(c_{n+1,j})=c_{n,j},\] which is immediate from Proposition \ref{prop.EulerRelationsBig}.

For the second condition, we wish to show that there exists a constant $C$ such that \[\left\|p^{-2rn}\sum_{j=0}^{2r}(-1)^j\binom{2r}{j}\Res_{F_{cp^\infty}/F_{cp^n}}(c_{n,j})\right\|\le Cp^{\lfloor\lambda n\rfloor},\] where $\|\,\cdot\,\|$ is the supremum seminorm on $H^1(F_{cp^\infty},\mathbf{V}_\mathscr{F}^\ast{\otimes}\sigma^{\kappa_{{\mathscr{U}}}})$ coming from a choice of norm on $\mathbf{V}_\mathscr{F}^\ast{\otimes}\sigma^{\kappa_{{\mathscr{U}}}}$ as a Banach $\mathcal{O}(\mathscr{U})$-module. By Lemma \ref{lem.vartheta}, $e_{cp^n}-\bar{e}_{cp^n}\in p^n(\mathcal{O}_L^2)^\vee$, therefore\[(e_{cp^n}-\bar{e}_{cp^n})^{\odot 2r}=\sum_{j=0}^{2r} (-1)^j(e_{cp^n}^{2r-j}\odot \bar{e}_{cp^n}^j)\in p^{2rn}\TSym^{2r}((\mathcal{O}_L^2)^\vee).\] Taking the product of that sum with $\mathbf{e}_{cp^n}^{[\mathscr{U}-2r,0]}$ and applying the overconvergent projector $\Pi_j$ gives
\begin{equation}\label{eq.SumBinom}
\sum_{j=0}^{2r} (-1)^j\Pi_j(\mathbf{e}_{cp^n}^{[\mathscr{U}-2r,0]}\odot(e_{cp^n}^{2r-j}\odot \bar{e}_{cp^n}^j))=\sum_{j=0}^{2r}(-1)^j\binom{\nabla-j}{2r-j}\mathbf{e}_{cp^n}^{[\mathscr{U},j]},
\end{equation}
where the equality (without the alternating sum) is \cite[Lemma 5.1.5]{LoefflerZerbes.Rankin}. Since $\Pi_j$ has denominators bounded in terms of $j$ and $m=1$, there is a constant $C_{j}$ such that \[\Pi_j(\mathbf{e}_{cp^n}^{[\mathscr{U}-2r,0]}\odot(e_{cp^n}^{2r-j}\odot \bar{e}_{cp^n}^j))\in C_{j}p^{2rn}D_{\mathscr{U},1}^\circ,\] where $D_{\mathscr{U},1}^\circ$ denotes the norm-1 distributions in $D_{\mathscr{U},1}$. Applying the big Gysin map \eqref{eq.BigGyin}, the dual map $D_{\mathscr{U},1}^\circ\to D_{\widetilde{\mathscr{U}},1}^\circ$ induced by the inclusion $\mathscr{U}\hookrightarrow\widetilde{\mathscr{U}}$, and $\mathrm{pr}^{[\mathscr{F},j]}$ to the right-hand side of \eqref{eq.SumBinom} and using the explicit calculation \[\binom{\nabla}{j}\binom{\nabla-j}{2r-j}=\dfrac{\nabla\cdot\ldots\cdot(\nabla-j+1)}{j!}\cdot\dfrac{(\nabla-j)\cdot\ldots\cdot(\nabla-2r+1)}{(2r-j)!}\cdot\dfrac{(2r)!}{(2r)!}=\binom{2r}{j}\binom{\nabla}{2r},\] we have \[\left\|\sum_{j=0}^{2r}(-1)^j\binom{2r}{j}\binom{\nabla}{2r}\mathbf{z}^{[\mathscr{F},j]}_{cp^n,1}\right\|\le C_{j}p^{2rn}.\] To conclude the calculation,
\begin{align*}
\left\|p^{-2rn}\sum_{j=0}^{2r}(-1)^j\binom{2r}{j}\Res_{F_{cp^\infty}/F_{cp^n}}(c_{m,j})\right\|&\le p^{-2rn}\left\|\sum_{j=0}^{2r}(-1)^j\binom{2r}{j}\binom{\nabla}{2r}\mathbf{z}^{[\mathscr{F},j]}_{cp^n,1}\mathbf{a}_p^{-n}\right\|\\
&\le C_{j}\|\mathbf{a}_p^{-n}\| = C_{j}p^{\lambda n}
\end{align*}
which implies the inequality we want. Thus we may apply \cite[Proposition 2.3.3]{LoefflerZerbes.Rankin} to obtain a unique class $\mathbf{z}_{cp}^{[\mathscr{F},\mathbf{j}]}\in H^1(F_{cp^\infty}, D_{\mathscr{W}_{\Gamma_1},\lambda}\widehat{\otimes}_{\Z_p}(\mathbf{V}_\mathscr{F}^\ast{\otimes}\sigma^{\kappa_{{\mathscr{U}}}}))$ interpolating the classes $\Res_{F_{cp^\infty}/F_{cp^n}}(c_{n,j})$, where $\mathscr{W}_{\Gamma_1}$ denotes the weight space associated to $\Lambda_{\Gamma_1}$. Since all quaternionic modular forms are cuspidal (as the underlying Shimura varieties are compact) we have $H^0(F_{cp^\infty},\mathbf{V}_\mathscr{F}^\ast{\otimes}\sigma^{\kappa_{{\mathscr{U}}}})=0$, which, via the inflation-restriction sequence, allows the class above to be defined over $F_{cp}$, which we can map to the larger submodule \[\mathbf{z}_{cp}^{[\mathscr{F},\mathbf{j}]}\in H^1\left(F_{cp},\mathbf{V}_{\mathscr{F}}^\ast\widehat\otimes_{\Z_p}
\sigma_{\et}^{\kappa_{{\mathscr{U}}}-\mathbf{j}}\bar\sigma_{\et}^{\mathbf{j}}\right).\]It then follows by the interpolation property in the definition of the class that \[\mathrm{mom}_{j,n}(\mathbf{z}_{cp}^{[\mathscr{F},\mathbf{j}]})=\mathbf{a}_p^{-n}\binom{\mathbf{k}-2}{2r}\mathbf{z}^{[\mathscr{F},j]}_{cp^n,1},\] where $\mathbf{k}$ is the weight of the Coleman family, which is constant and equal to $k=2r+2$, thus making the binomial term vanish, concluding the proof.
\end{proof}

\begin{definition}
The class $\mathbf{z}_{cp}^{[\mathscr{F},\mathbf{j}]}\in H^1\left(F_{cp},\mathbf{V}_{\mathscr{F}}^\ast\widehat\otimes_{\Z_p}
\sigma_{\et}^{\kappa_{{\mathscr{U}}}-\mathbf{j}}\bar\sigma_{\et}^{\mathbf{j}}\right)$ from Proposition \ref{prop.BigHeegnerClass} is called the \textit{big generalized Heegner class} associated to $\mathscr{F}$.
\end{definition}

\subsection{Specialization of big generalized Heegner classes}

For each $k\in\Z\cap\mathscr{U}$, composing $\mom_{j,n}$ with the weight $k=2r+2$ specialization, we also have a map 
\[\mathrm{mom}_{j,n}^{2r}\colon H^1\left(F_{cp},\mathbf{V}_{\mathscr{F}}^\ast\widehat\otimes_{\Z_p}
\sigma_{\et}^{\kappa_{{\mathscr{U}}}-\mathbf{j}}\bar\sigma_{\et}^{\mathbf{j}}
\right)\longrightarrow H^1(F_{cp^n},V_{\mathcal{F}_{k}}^*\otimes\sigma_{\et}^{2r-j}\bar\sigma_{\et}^j).\]

\begin{theorem}\label{teo.Specialization}
For each $k\in\Z\cap\mathscr{U}$ with $k\geq j$, we have \[\mathrm{mom}^{2r}_{j,n}\left(\mathbf{z}_{cp}^{[\mathscr{F},\mathbf{j}]}\right)=a_p(\mathcal{F}_k)^{-n}\binom{2r}{j}^{-1}z_{cp^n,1}^{[\mathcal{F}_{k},j]}\in H^1(F_{cp^n},V_{\mathscr{F}}^*\otimes\sigma^{2r-j}_{\et}\bar{\sigma}^j_{\et}).\] Furthermore, if $\chi$ is a Hecke character of infinity type $(2r-j,j)$ and finite type $(cp^n, \mathfrak{N}^+, \psi_0)$, the specialization at $\chi$ is \[a_p(\mathcal{F}_k)^{-n}\binom{2r}{j}^{-1}z_{cp^n,1}^{[\mathcal{F}_{k},\chi]}\in H^1(H_{cp^n},V_{\mathscr{F}}^\ast\otimes\chi).\]
\end{theorem}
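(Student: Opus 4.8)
The plan is to deduce Theorem \ref{teo.Specialization} by composing the interpolation properties already established in the previous subsections, so the proof is essentially a bookkeeping argument tracking how each specialization map acts. First I would recall that, by Proposition \ref{prop.BigHeegnerClass}, applying $\mathrm{mom}_{j,n}$ to $\mathbf{z}_{cp}^{[\mathscr{F},\mathbf{j}]}$ produces exactly $\mathbf{a}_p^{-n}\mathbf{z}^{[\mathscr{F},j]}_{cp^n,1}$, an element of $H^1(F_{cp^n},\mathbf{V}_{\mathscr{F}}^\ast\otimes\sigma_{\et}^{\kappa_{\mathscr{U}}-j}\bar\sigma_{\et}^j)$. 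It remains to apply the weight-$k$ specialization of the family. Under this specialization, $\mathbf{a}_p$ becomes $a_p(\mathcal{F}_k)$, the big Galois representation $\mathbf{V}_{\mathscr{F}}^\ast$ becomes $V_{\mathcal{F}_k}^\ast$ by Proposition \ref{prop.BigGaloisRep}, the universal character $\kappa_{\mathscr{U}}$ becomes the integer character $k-2$ (so $\sigma_{\et}^{\kappa_{\mathscr{U}}-j}$ becomes $\sigma_{\et}^{2r-j}$), and $\mathbf{z}^{[\mathscr{F},j]}_{cp^n,1}$ becomes $\binom{2r}{j}^{-1}z^{[\mathcal{F}_k,j]}_{cp^n,1}$ by Proposition \ref{prop.SpecializationNabla}. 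Since $\mathrm{mom}^{2r}_{j,n}$ is by definition $\mathrm{mom}_{j,n}$ followed by this weight-$k$ specialization, and these maps are evidently compatible (the interpolation factor $\mathbf{a}_p^{-n}$ and the normalizing factor $\binom{\nabla}{j}^{-1}\to\binom{2r}{j}^{-1}$ specialize independently of each other), the first displayed formula follows immediately.

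For the second assertion, I would combine the first formula with Lemma \ref{lem.Shimura} and the discussion following it. By Lemma \ref{lem.Shimura}, with $\mathcal{F}=\mathcal{F}_k$ and $\psi=\psi_0$, the class $z_{cp^n,1}^{[\mathcal{F}_k,j]}$ lies in the $\Gal(F_{cp^n}/H_{cp^n})$-invariant subspace of $H^1(F_{cp^n},V_{\mathcal{F}_k}^\ast\otimes\sigma_{\et}^{2r-j}\bar\sigma_{\et}^j)$, which under the isomorphism induced by $\chi$ identifies with the invariant subspace of $H^1(F_{cp^n},V_{\mathcal{F}_k}^\ast\otimes\chi)$; here one uses that $\chi$ has conductor prime to $N^+$ and finite type $(cp^n,\mathfrak{N}^+,\psi_0)$, so that $\chi$ restricts to $\psi_0\sigma_{\et}^{2r-j}\bar\sigma_{\et}^j$ on $\Gal(F_{cp^n}/H_{cp^n})$ exactly as in the proof of that lemma. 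The inflation-restriction isomorphism \eqref{eq.InflationRestriction}, valid by the irreducibility of $V_{\mathcal{F}_k}$, then identifies this invariant subspace with $H^1(H_{cp^n},V_{\mathcal{F}_k}^\ast\otimes\chi)$, and by definition the image of $z_{cp^n,1}^{[\mathcal{F}_k,j]}$ under this chain of identifications is precisely the $\chi$-component $z_{cp^n,1}^{[\mathcal{F}_k,\chi]}$. Applying the same identifications to the formula from the first part, and noting that the scalars $a_p(\mathcal{F}_k)^{-n}\binom{2r}{j}^{-1}$ are unaffected, yields the second displayed formula.

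The main obstacle, such as it is, is purely organizational: one must verify that the two specialization operations---the moment/$j$-specialization from $\sigma^{\kappa_{\mathscr{U}}-\mathbf{j}}\bar\sigma^{\mathbf{j}}$ down to $\sigma^{2r-j}\bar\sigma^j$, and the weight-$k$ specialization from the family $\mathscr{F}$ down to $\mathcal{F}_k$---genuinely commute with the cohomology maps and with the Gysin construction, so that $\mathrm{mom}^{2r}_{j,n}$ can be computed ``one factor at a time.'' This compatibility, however, is already implicit in the way $\mathrm{mom}^{2r}_{j,n}$ was defined as a composite, and in the functoriality of the constructions in \S\ref{subsec.BigGaloisRepresentations} and \S\ref{subsec.AbelJacobi}; no new input beyond Propositions \ref{prop.BigHeegnerClass}, \ref{prop.SpecializationNabla}, \ref{prop.BigGaloisRep} and Lemma \ref{lem.Shimura} is required. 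I would also remark that the hypothesis $k\geq j$ (rather than the sharper $k-2\geq j$ appearing in Proposition \ref{prop.SpecializationNabla}) is harmless, since for $k-2<j\leq 2r=k-2$ the condition is vacuous anyway; but I would state the theorem with whichever bound is consistent with its use of Proposition \ref{prop.SpecializationNabla}.
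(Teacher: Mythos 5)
Your proposal is correct and follows essentially the same route as the paper's own proof: apply Proposition \ref{prop.BigHeegnerClass} to get $\mathbf{a}_p^{-n}\mathbf{z}^{[\mathscr{F},j]}_{cp^n,1}$, then specialize at weight $k$ using Proposition \ref{prop.SpecializationNabla} (with $\mathbf{a}_p\mapsto a_p(\mathcal{F}_k)$), and deduce the $\chi$-statement from the way the $\chi$-component was defined via Lemma \ref{lem.Shimura} and \eqref{eq.InflationRestriction}. Your extra bookkeeping (compatibility of the two specializations, and the harmless discrepancy between $k\geq j$ and $k-2\geq j$) only makes explicit what the paper leaves implicit.
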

\begin{proof}
By Proposition \ref{prop.BigHeegnerClass}, $\mathrm{mom}_{j,n}(\mathbf{z}_{cp}^{[\mathscr{F},\mathbf{j}]})=\mathbf{a}_p^{-n}\mathbf{z}^{[\mathscr{F},j]}_{cp^n,1}$. The specialization at weight $k$ of $\mathbf{a}_p$ is by definition $a_p(\mathcal{F}_k)$ and the specialization at weight $k$ of $\mathbf{z}^{[\mathscr{F},j]}_{cp^n,1}$ is $\binom{2r}{j}^{-1}z_{cp^n,1}^{[\mathcal{F}_{k},j]}$ by Proposition \ref{prop.SpecializationNabla}. The second statement follows immediately.
\end{proof}

\begin{corollary}
Let $\mathcal{F}^\sharp\in M_k(N^+,1,L)$ be such that $\mathcal{F}$ as above is its $p$-stabilization with respect to a root $\upalpha$ of its Hecke polynomial. For each $k\in\Z\cap\mathscr{U}$ with $k\geq j$ and $\chi$ a Hecke character of infinity type $(2r-j,j)$, the big generalized Heegner class $\mathbf{z}_{cp}^{[\mathscr{F},\mathbf{j}]}$ specializes at weight $k$ and character $\chi$ to \[E_p(\mathcal{F}^{\sharp},\chi)\cdot\upalpha^{-n}\binom{2r}{j}^{-1}z_{cp^n,0}^{[\mathcal{F}^\sharp,\chi]}\in H^1(H_{cp^n},V_{\mathscr{F}}^*\otimes\chi).\]
\end{corollary}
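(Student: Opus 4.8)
The plan is to combine the weight-$k$ specialization already computed in Theorem~\ref{teo.Specialization} with the $p$-stabilization comparison of Proposition~\ref{prop.ProjStab}. By Theorem~\ref{teo.Specialization}, the specialization of $\mathbf{z}_{cp}^{[\mathscr{F},\mathbf{j}]}$ at weight $k$ and character $\chi$ is $a_p(\mathcal{F}_k)^{-n}\binom{2r}{j}^{-1}z_{cp^n,1}^{[\mathcal{F}_k,\chi]}\in H^1(H_{cp^n},V_{\mathscr{F}}^\ast\otimes\chi)$, where $\mathcal{F}_k$ is the specialization of $\mathscr{F}$ at weight $k$; by hypothesis $\mathcal{F}_k=\mathcal{F}$ is the $p$-stabilization of $\mathcal{F}^\sharp$ with respect to $\upalpha$, and $a_p(\mathcal{F})=\upalpha$, so the factor $a_p(\mathcal{F}_k)^{-n}$ becomes $\upalpha^{-n}$. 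It remains to rewrite $z_{cp^n,1}^{[\mathcal{F},\chi]}$ in terms of the unstabilized class $z_{cp^n,0}^{[\mathcal{F}^\sharp,\chi]}$ attached to the original form $\mathcal{F}^\sharp$ over $\widetilde{X}_0$.

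For this I would invoke Proposition~\ref{prop.ProjStab}: the isomorphism $(\mathrm{pr}_\upalpha)_*\colon H^1(H_{cp^n},V_{\mathcal{F}}^\ast\otimes\chi)\xrightarrow{\sim} H^1(H_{cp^n},V_{\mathcal{F}^\sharp}^\ast\otimes\chi)$ carries $z_{cp^n,1}^{[\mathcal{F},\chi]}$ to $E_p(\mathcal{F}^\sharp,\chi)\cdot z_{cp^n,0}^{[\mathcal{F}^\sharp,\chi]}$, where $E_p(\mathcal{F}^\sharp,\chi)$ is the Euler factor displayed in that proposition. Since Proposition~\ref{prop.BigGaloisRep} identifies $\mathbf{V}_{\mathscr{F}}^\ast/k\,\mathbf{V}_{\mathscr{F}}^\ast\cong V_{\mathcal{F}}^\ast$ and the comparison isomorphism of Proposition~\ref{prop.ProjStabIsom} is natural, we may transport the specialized class along $(\mathrm{pr}_\upalpha)_*$ without changing its content; applying $(\mathrm{pr}_\upalpha)_*$ to the formula of Theorem~\ref{teo.Specialization} then yields
\[
(\mathrm{pr}_\upalpha)_*\,\mathrm{mom}^{2r}_{j,n}\bigl(\mathbf{z}_{cp}^{[\mathscr{F},\mathbf{j}]}\bigr)=E_p(\mathcal{F}^\sharp,\chi)\cdot\upalpha^{-n}\binom{2r}{j}^{-1}z_{cp^n,0}^{[\mathcal{F}^\sharp,\chi]}\in H^1(H_{cp^n},V_{\mathscr{F}}^\ast\otimes\chi),
\]
which is the assertion (identifying $V_{\mathscr{F}}^\ast$ at weight $k$ with $V_{\mathcal{F}^\sharp}^\ast\cong V_{\mathcal{F}}^\ast$ via $(\mathrm{pr}_\upalpha)_*$).

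The main point to be careful about is the bookkeeping of the factors: one must confirm that the $\binom{2r}{j}^{-1}$ introduced by the $\binom{\nabla}{j}^{-1}$-normalization in the definition of $\mathbf{z}^{[\mathscr{F},j]}_{cp^n,1}$ is not disturbed by $(\mathrm{pr}_\upalpha)_*$, that the power $\upalpha^{-n}$ is the genuine $U_p$-eigenvalue of the $p$-stabilized form (consistent with the Euler relations of Proposition~\ref{prop.EulerRelationsBig}), and that the Euler factor $E_p(\mathcal{F}^\sharp,\chi)$ is inserted with the correct case distinction according to the splitting behaviour of $p$ in $K$. No genuinely new geometric input is needed; the statement is a formal consequence of Theorem~\ref{teo.Specialization} and Proposition~\ref{prop.ProjStab}.
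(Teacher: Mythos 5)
Your proposal is correct and follows exactly the paper's argument: the corollary is deduced by combining the weight-$k$ specialization from Theorem \ref{teo.Specialization} (with $\mathcal{F}_k=\mathcal{F}$, $a_p(\mathcal{F})=\upalpha$) with the $p$-stabilization comparison of Proposition \ref{prop.ProjStab}, which introduces the Euler factor $E_p(\mathcal{F}^\sharp,\chi)$ and the unstabilized class $z_{cp^n,0}^{[\mathcal{F}^\sharp,\chi]}$. Your extra bookkeeping remarks are sound but not needed beyond what those two results already provide.
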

\begin{proof}
Follows directly from Theorem \ref{teo.Specialization} and Proposition \ref{prop.ProjStab}.
\end{proof}

\begin{remark}
The class $z_{cp^n,1}^{[\mathcal{F}_k,\chi]}$, originally defined over $H_{cp^n}$, can be defined over any intermediate extension $H_{cp^n}/K'/K$ (including $K$ itself) directly by applying $\norm_{H_{cp^n}/K'}$.
\end{remark}

\subsection{Big generalized Heegner classes defined over $K$}
To further lower the field of definition of the big generalized Heegner classes, it is necessary to remove dependence on $\kappa_{{\mathscr{U}}}$, which is not well-defined over $H_{cp}$. The workaround is to reparametrize the underlying affinoid of the space of distributions $\mathbf{V}_\mathscr{F}^*\widehat{\otimes}_{\Z_p}\sigma^{\kappa_{{\mathscr{U}}}-\mathbf{j}}\bar{\sigma}^{\mathbf{j}}$ by an appropriate affinoid over a suitable, bigger weight space. The $\mathbf{V}_\mathscr{F}^*\widehat{\otimes}_{\Z_p}\sigma^{\kappa_\mathscr{U}}$ part is parametrized by the affinoid $\mathscr{U}\subseteq\mathscr{W}$, while the $\sigma^{-\mathbf{j}}\bar{\sigma}^{\mathbf{j}}$ part is a $\Lambda_{\Gamma_1}$-module, thus corresponding to an affinoid $\mathscr{U}_1\subseteq\mathscr{W}_{\Gamma_1}$, which, for simplicity, we may take to be the whole weight space. The following is a summary of \cite[\S5.4]{JetchevLoefflerZerbes}:

\begin{proposition}
Let $\widetilde{\mathscr{W}}\to\mathscr{W}\times\mathscr{W}_{\Gamma_1}$ be the weight space parametrizing the continuous characters $K^\times\backslash\mathbb{A}_{K,\mathrm{fin}}^\times\to\mathbb{Q}_p^\times$ which restrict trivially to $(\widehat{\mathcal{O}_K})_{(p)}^\times$. Then the preimage of $\mathscr{U}\times\mathscr{W}_{\Gamma_1}$ in $\widetilde{\mathscr{W}}$ is an affinoid $\mathscr{Y}\subseteq \widetilde{\mathscr{W}}$ isomorphic to $\mathscr{U}\times\mathscr{W}_{\Gamma_1}$ and whose universal character $\kappa_{\mathscr{Y}}$ is well defined as a character $\Gal(K^\mathrm{ab}/K)\to\mathcal{O}(\mathscr{Y})^\times$, and therefore is defined over $K$.
\end{proposition}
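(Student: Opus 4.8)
The plan is to translate the statement into class field theory and the rigid geometry of Iwasawa algebras, following the analogue in \cite[\S5.4]{JetchevLoefflerZerbes}. First, by geometrically normalized Artin reciprocity (Remark~\ref{rem.Reciprocity}) a continuous character $K^\times\backslash\mathds{A}_{K,\mathrm{fin}}^\times\to\Q_p^\times$ has profinite image, hence takes values in $\Z_p^\times$, and being trivial on $(\widehat{\mathcal{O}_K})_{(p)}^\times$ means precisely that the corresponding Galois character factors through $\mathfrak{G}\defeq\Gal(\mathcal{K}/K)$, where $\mathcal{K}$ is the maximal abelian extension of $K$ unramified outside $p$. Class field theory gives an exact sequence $1\to(\mathcal{O}_K\otimes_\Z\Z_p)^\times/\overline{\mathcal{O}_K^\times}\to\mathfrak{G}\to\Pic(\mathcal{O}_K)\to1$, and since $K$ is imaginary quadratic the group $\mathcal{O}_K^\times$ is finite while $(\mathcal{O}_K\otimes\Z_p)^\times$ has $\Z_p$-rank $[K:\Q]=2$; hence $\mathfrak{G}\cong\Delta\times\Z_p^{2}$ for a finite abelian group $\Delta$. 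Consequently $Y\mapsto\Hom_{\mathrm{cont}}(\mathfrak{G},\mathcal{O}(Y)^\times)$ is representable by the rigid analytic space $\widetilde{\mathscr{W}}\defeq(\Spf\Z_p\pwseries{\mathfrak{G}})^{\rig}$, a finite disjoint union of two-dimensional open polydiscs carrying a tautological character $\kappa^{\univ}\colon\mathfrak{G}\to\mathcal{O}(\widetilde{\mathscr{W}})^\times$; this is the weight space of the statement.

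Second, I would produce the map to $\mathscr{W}\times\mathscr{W}_{\Gamma_1}$ and show it is finite. The inclusion of idèle groups $\mathds{A}_{\Q,\mathrm{fin}}^\times\hookrightarrow\mathds{A}_{K,\mathrm{fin}}^\times$ is compatible with the prime-to-$p$ unit subgroups and so induces, on the associated quotients, a homomorphism $\Z_p^\times\cong\Gal(\Q(\mu_{p^\infty})/\Q)\to\mathfrak{G}$; restriction of characters along it defines $\widetilde{\mathscr{W}}\to\mathscr{W}$. For the other factor I would invoke the isomorphism $\Gamma_1\cong(\mathcal{O}_K\otimes\Z_p)^\times/\Z_p^\times$ coming from Artin reciprocity (cf.\ \S\ref{subsec.CFT}): a continuous splitting of $(\mathcal{O}_K\otimes\Z_p)^\times\twoheadrightarrow(\mathcal{O}_K\otimes\Z_p)^\times/\Z_p^\times$ embeds $\Gamma_1$ into $(\mathcal{O}_K\otimes\Z_p)^\times\subseteq\mathds{A}_{K,\mathrm{fin}}^\times$, hence into $\mathfrak{G}$, and gives $\widetilde{\mathscr{W}}\to\mathscr{W}_{\Gamma_1}$. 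One then checks that the combined homomorphism $\Z_p^\times\times\Gamma_1\to\mathfrak{G}$ has finite kernel and finite cokernel, the cokernel being a quotient of $\Pic(\mathcal{O}_K)$; this makes $\widetilde{\mathscr{W}}\to\mathscr{W}\times\mathscr{W}_{\Gamma_1}$ a finite morphism of rigid spaces, so the preimage $\mathscr{Y}$ of the affinoid $\mathscr{U}\times\mathscr{W}_{\Gamma_1}$ is again affinoid.

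Third comes the identification $\mathscr{Y}\cong\mathscr{U}\times\mathscr{W}_{\Gamma_1}$, which I expect to be the main obstacle. \textit{A priori} the finite map above may be an étale cover of its image of degree $|\Pic(\mathcal{O}_K)|$, so one must show that exactly one sheet of $\widetilde{\mathscr{W}}$ lies over $\mathscr{U}\times\mathscr{W}_{\Gamma_1}$ and maps isomorphically onto it. Because $\mathscr{U}$ is a connected wide-open neighbourhood of a single classical weight, $\mathscr{U}\times\mathscr{W}_{\Gamma_1}$ is contained in one connected component of $\mathscr{W}\times\mathscr{W}_{\Gamma_1}$, and the heart of the matter is a bookkeeping of connected components, i.e.\ of the finite groups $\pi_0(\widetilde{\mathscr{W}})$, $\pi_0(\mathscr{W})$, $\pi_0(\mathscr{W}_{\Gamma_1})$ together with the way $\Delta$ and $\Pic(\mathcal{O}_K)$ embed into $\mathfrak{G}$. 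Concretely I would treat the split, inert and ramified cases of $p$ separately and check in each that, once a compatible connected component of $\widetilde{\mathscr{W}}$ has been singled out, the class-group ambiguity has been absorbed and the remaining two-dimensional formal parameters of $\widetilde{\mathscr{W}}$ are matched bijectively with those of $\mathscr{U}\times\mathscr{W}_{\Gamma_1}$; this is \cite[\S5.4]{JetchevLoefflerZerbes} transposed to the present situation. A secondary technical point is to reconcile the geometric normalization of $\rec_K$ with the recipe $x\mapsto\sigma_L^{-1}(x^{(p)})$ defining $\sigma_\text{ét}$ (and with the conductor of $\sigma_\text{ét}$ at $\mathfrak{N}^+$), so that the two projections recover exactly the characters entering Proposition~\ref{prop.BigHeegnerClass}.

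Finally, that $\kappa_{\mathscr{Y}}$ is defined over $K$ is then essentially formal. By construction $\kappa^{\univ}$ is a continuous character of $\mathfrak{G}=\Gal(\mathcal{K}/K)$, hence of $\Gal(K^{\mathrm{ab}}/K)$, with values in $\mathcal{O}(\widetilde{\mathscr{W}})^\times$; restricting along $\mathscr{Y}\hookrightarrow\widetilde{\mathscr{W}}$ gives $\kappa_{\mathscr{Y}}\colon\Gal(K^{\mathrm{ab}}/K)\to\mathcal{O}(\mathscr{Y})^\times$, which is the meaning of ``defined over $K$''. Pushing $\kappa_{\mathscr{Y}}$ forward through the two projections of $\mathscr{Y}\cong\mathscr{U}\times\mathscr{W}_{\Gamma_1}$ recovers on one side the character $\sigma_\text{ét}^{\kappa_{\mathscr{U}}}$ and on the other the universal character of $\Lambda_{\Gamma_1}$ twisted into $\Lambda_{\Gamma_1}(\bar\sigma_\text{ét}/\sigma_\text{ét})$, so that the base change of $\mathbf{V}_{\mathscr{F}}^*\widehat{\otimes}\kappa_{\mathscr{Y}}$ along $\mathscr{U}\times\mathscr{W}_{\Gamma_1}$ is the coefficient module $\mathbf{V}_{\mathscr{F}}^*\widehat{\otimes}_{\Z_p}\sigma_\text{ét}^{\kappa_{\mathscr{U}}-\mathbf{j}}\bar\sigma_\text{ét}^{\mathbf{j}}$ appearing in Proposition~\ref{prop.BigHeegnerClass}; consequently $\mathbf{z}_{cp}^{[\mathscr{F},\mathbf{j}]}$ acquires coefficients defined over $K$, which completes the argument.
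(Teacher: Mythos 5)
Note first that the paper offers no proof of this proposition at all: it is stated explicitly as a summary of \cite[\S 5.4]{JetchevLoefflerZerbes}, so the comparison is with the argument of that reference, and your overall strategy — identifying $\widetilde{\mathscr{W}}$ via class field theory with the character space of $\mathfrak{G}=\Gal(\mathcal{K}/K)$, $\mathcal{K}$ the maximal abelian extension of $K$ unramified outside $p$, deducing finiteness of $\widetilde{\mathscr{W}}\to\mathscr{W}\times\mathscr{W}_{\Gamma_1}$ from finiteness of the kernel and cokernel of $\Z_p^\times\times\Gamma_1\to\mathfrak{G}$, and observing that $\kappa_{\mathscr{Y}}$ is by construction a character of $\Gal(K^{\mathrm{ab}}/K)$ — is indeed the one being invoked. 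The genuine gap is at the assertion that carries all the content: the identification $\mathscr{Y}\cong\mathscr{U}\times\mathscr{W}_{\Gamma_1}$. What you offer there is a plan (``I would treat the split, inert and ramified cases\dots and check\dots''), not an argument, and the mechanism you propose — that ``exactly one sheet'' of $\widetilde{\mathscr{W}}$ lies over $\mathscr{U}\times\mathscr{W}_{\Gamma_1}$ and maps isomorphically onto it — is inconsistent with your own computation. With your choice of splitting, the cokernel of $\Z_p^\times\times\Gamma_1\to\mathfrak{G}$ is exactly $\Pic(\mathcal{O}_K)$ and the kernel is finite but generically nontrivial (of order $|\mathcal{O}_K^\times|$), so the restriction map is finite of degree $|\Pic(\mathcal{O}_K)|$ onto the closed locus of characters killing the kernel: every point of that locus has $|\Pic(\mathcal{O}_K)|$ preimages, obtained by twisting by characters of the class group, and \emph{all} of them lie in the preimage of $\mathscr{U}\times\mathscr{W}_{\Gamma_1}$. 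There is no distinguished single sheet; any proof of the claimed isomorphism has to go through the $\pi_0$ bookkeeping you only allude to (the parity constraint imposed by the kernel, the class-group twists distributed among the components of $\widetilde{\mathscr{W}}$, and the identification of each component of $\mathscr{Y}$ with $\mathscr{U}\times$ an open disc), and that is precisely what is missing.

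A second, smaller issue: you define the projection to $\mathscr{W}_{\Gamma_1}$ via an arbitrary continuous splitting of $(\mathcal{O}_K\otimes\Z_p)^\times\twoheadrightarrow\Gamma_1$. This is non-canonical and in general does not interact correctly with the twist $\Lambda_{\Gamma_1}(\bar\sigma_{\et}/\sigma_{\et})$ entering Proposition \ref{prop.BigHeegnerClass}; the map that makes your final paragraph work is the canonical one induced by $x\mapsto x/\bar{x}$ on $(\mathcal{O}_K\otimes\Z_p)^\times$ (equivalently, by viewing $\Gamma_1=\Gal(F_{cp^\infty}/F_{cp})$ inside $\Gal(K^{\mathrm{ab}}/K)$), together with the normalization of $\rec_K$ fixed in Remark \ref{rem.Reciprocity}. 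You flag this as a secondary point, but without it the asserted compatibility — that pulling back $\kappa_{\mathscr{Y}}$ along $\mathscr{Y}\cong\mathscr{U}\times\mathscr{W}_{\Gamma_1}$ recovers $\sigma_{\et}^{\kappa_{\mathscr{U}}-\mathbf{j}}\bar\sigma_{\et}^{\mathbf{j}}$, so that $\widetilde{\mathbf{V}}_\mathscr{F}$ really specializes to the coefficient module of Proposition \ref{prop.BigHeegnerClass} — does not hold on the nose, so this choice must be pinned down rather than deferred.
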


Let $\widetilde{\mathbf{V}}_\mathscr{F}\defeq\mathbf{V}_{\mathscr{F}}^*\otimes_{\mathcal{O}(\mathscr{U})}\mathcal{O}(\mathscr{Y})(\kappa_{\mathscr{Y}})$ and consider the image of the big generalized Heegner class $\mathbf{z}^{[\mathscr{F},\mathbf{j}]}_{cp}$ in $H^1(F_{cp},\widetilde{\mathbf{V}}_\mathscr{F})$, denoted by the same symbol. By an argument similar to Lemma \ref{lem.Shimura}, $\mathbf{z}^{[\mathscr{F},\mathbf{j}]}_{cp}$ is found to be invariant with respect to $\Gal(F_{cp}/H_{cp})$, so it can be seen as class in $H^1(H_{cp},\widetilde{\mathbf{V}}_\mathscr{F})$. Finally, define \[\mathbf{z}^{[\mathscr{F},\mathbf{j}]}\defeq\cores_{H_{cp}/K}(\mathbf{z}^{[\mathscr{F},\mathbf{j}]}_{cp})\in H^1(K,\widetilde{\mathbf{V}}_\mathscr{F}),\] which, by abuse of nomenclature, we also call the \textit{big generalized Heegner class} associated to $\mathscr{F}$, enjoying the same specializations as the original, only being defined over $K$.

\bibliographystyle{amsalpha}
\bibliography{references}
\end{document}